\numberwithin{equation}{subsection}
\theoremstyle{plain}
\newtheorem{theorem}[equation]{Theorem}
\newtheorem{proposition}[equation]{Proposition}
\newtheorem{lemma}[equation]{Lemma}
\newtheorem{corollary}[equation]{Corollary}
\theoremstyle{definition}
\newtheorem{definition}[equation]{Definition}
\newtheorem{example}[equation]{Example}
\newtheorem{remark}[equation]{Remark}
\newtheorem{observation}[equation]{Observation}
\newtheorem{construction}[equation]{Construction}
\newtheorem{warning}[equation]{Warning}
\let\scr=\mathcal
\let\phi=\varphi
\let\into=\hookrightarrow
\let\onto=\twoheadrightarrow
\def\AA{\scr A}
\def\BB{\scr B}
\def\CC{\scr C}
\def\DD{\scr D}
\def\EE{\scr E}
\def\WW{\scr W}
\def\II{\scr I}
\def\OO{\scr O}
\def\QQ{\scr Q}
\def\SS{\scr S}
\def\UU{\scr U}
\def\WW{\scr W}
\def\XX{\scr X}
\def\YY{\scr Y}
\def\ZZ{\scr Z}
\DeclareMathOperator{\id}{id}
\DeclareMathOperator{\ev}{ev}
\DeclareMathOperator{\Cat}{Cat}
\DeclareMathOperator{\Pos}{Pos}
\DeclareMathOperator{\RFib}{RFib}
\DeclareMathOperator{\LFib}{LFib}
\DeclareMathOperator{\ICat}{\mathsf{Cat}}
\DeclareMathOperator{\IFilt}{\mathsf{Filt}}
\DeclareMathOperator{\Set}{Set}
\DeclareMathOperator{\Sub}{Sub}
\DeclareMathOperator{\Tw}{Tw}
\DeclareMathOperator{\Fun}{Fun}
\DeclareMathOperator{\Map}{map}
\DeclareMathOperator{\const}{const}
\DeclareMathOperator*{\diag}{diag}
\DeclareMathOperator{\Sp}{Sp}
\DeclareMathOperator{\Fin}{Fin}
\DeclareMathOperator{\IFin}{\mathsf{Fin}}
\DeclareMathOperator{\fin}{fin}
\DeclareMathOperator{\Cone}{Cone}
\DeclareMathOperator{\et}{\Acute{e}t}
\DeclareMathOperator{\Disc}{Disc}
\DeclareMathOperator{\hyp}{hyp}
\DeclareMathOperator{\sh}{sh}
\DeclareMathOperator{\Id}{Id}
\newcommand{\col}[1]{\operatorname{colim}^{\operatorname{#1}}}
\newcommand{\LPr}{\operatorname{Pr}^{\operatorname{L}}}
\newcommand{\RPr}{\operatorname{Pr}^{\operatorname{R}}}
\newcommand{\RPrS}{\RPr_\infty}
\newcommand{\LTop}{\operatorname{Top}^{\operatorname{L}}}
\newcommand{\RTop}{\operatorname{Top}^{\operatorname{R}}}
\newcommand{\RTopS}{\RTop_\infty}
\newcommand{\map}[1]{\Map_{#1}}
\newcommand{\Over}[2]{#1_{\hspace{-1pt}/#2}}
\newcommand{\Under}[2]{#1_{\hspace{-1pt}#2/}}
\newcommand{\I}[1]{\mathsf{#1}}
\newcommand{\Comma}[3]{{#1}\downarrow_{#2}{#3}}
\newcommand{\compact}{\operatorname{cpt}}
\newcommand{\CatS}{\Cat_{\infty}}
\newcommand{\CatSS}{\widehat{\Cat}_\infty}
\newcommand{\cc}{\text{\normalfont{cc}}}
\newcommand{\lex}{\text{\normalfont{lex}}}
\newcommand{\ihom}{\underline{\operatorname{hom}}}
\newcommand{\iFun}[1][\BB]{\underline{\mathsf{Fun}}_{#1}}
\newcommand{\IPSh}[1][\BB]{\underline{\mathsf{PSh}}_{#1}}
\newcommand{\PSh}[1][\SS]{\operatorname{PSh}_{#1}}
\newcommand{\IShv}[1][\BB]{\underline{\mathsf{Sh}}_{#1}}
\newcommand{\Shv}[1][]{\operatorname{Sh}_{#1}}
\NewDocumentCommand{\Gen}{m o}{%
	\IfNoValueTF{#2}{%
		\langle #1\rangle%
	}{%
		\langle #1\rangle_{#2}%
	}%
}
\NewDocumentCommand{\Univ}{o}{%
	\IfNoValueTF{#1}{%
		\I{\Omega}%
	}{%
		\I{\Omega}_{#1}%
	}%
}
\NewDocumentCommand{\UnivHat}{o}{%
	\IfNoValueTF{#1}{%
		\widehat{\I{\Omega}}%
	}{%
		\widehat{\I{\Omega}}_{#1}%
	}%
}
\newcommand{\op}{\mathrm{op}}
\newcommand{\core}{\simeq}
\let\lim=\relax
\DeclareMathOperator*{\lim}{lim}
\DeclareMathOperator*{\colim}{colim}
\g@addto@macro\bfseries{\boldmath}
\newtheoremstyle{introthms}
{}{}{\itshape}{}{\bfseries }{}{ }
{\thmname{#1} \thmnumber{#2}. \thmnote{\bfseries{(#3)}}}
\theoremstyle{introthms}
\newtheorem{introthm}{Theorem}
\title{Proper morphisms of $\infty$-topoi}
\author{Louis Martini}
\author{Sebastian Wolf}
\date{\today}
\begin{document}

\begin{abstract}
	We characterise proper morphisms of $\infty$-topoi in terms of a relativised notion of compactness: we show that a geometric morphism of $\infty$-topoi is proper if and only if it commutes with colimits indexed by filtered internal $\infty$-categories in the target. In particular, our result implies that for any $\infty$-topos, the global sections functor is proper if and only if it preserves filtered colimits. As an application, we show that every proper and separated map of topological spaces gives rise to a proper morphism between the associated sheaf $\infty$-topoi, generalising a result of Lurie.
\end{abstract}

\maketitle
\setcounter{tocdepth}{2}
\tableofcontents
\addtocontents{toc}{\protect\setcounter{tocdepth}{1}}

\section{Introduction}

\subsection*{Compactness in topos theory}
A fundamental principle of topos theory is that topoi can be conceptualised as generalised topological spaces.
As a result, every topological concept ought to have a counterpart in topos theory. 
One of the core notions in geometry and topology is the concept of \emph{compactness}. Thus, the natural question arises what a toposic analogue of this notion might be.

A sanity check for a toposic definition of compactness is that it must faithfully extend the definition of compactness for topological spaces, in the sense that a space $X$ is compact if and only if its topos of sheaves $\Shv[](X)$ is compact.
Now it turns out that the notion of compactness of $ X $ can be formulated on the level of the underlying locale $ \OO(X) $ of open subsets of $ X $: the space $ X $ is compact if and only if the global sections functor $ \OO(X) \to \OO(\ast) $ preserves filtered colimits. This leads to a natural candidate for the notion of compactness of topoi: a topos $\XX$ is said to be compact if the global sections functor $\XX\to \Set$ preserves filtered colimits of subterminal objects.

However, this definition also suggests an alternative, stronger compactness condition: 
one could also require that the global sections functor $\XX \to \Set $ preserves \emph{all} filtered colimits.
It is no longer true that the sheaf topos associated to every compact topological space has this property (see Example~\ref{ex:SeparatedNecessary}), but it does hold whenever the space is in addition Hausdorff \cite[Example 1.1 (3)]{Moerdijk2000}.
These two notions of toposic compactness have been studied extensively in \cite{Moerdijk2000} by Moerdijk and Vermeulen under the names \emph{compact} and \emph{strongly compact}, respectively.

From a modern point of view, it seems unreasonable to only consider ($1$-)topoi as generalised spaces, since many invariants of topological spaces arise more naturally on the level of their associated sheaf $\infty$-topoi. Thus, it seems more appropriate to consider $\infty$-topoi as the correct generalisation of topological spaces. 
For $\infty$-topoi, there is now a whole infinite hierarchy of compactness conditions: 
we call an $\infty$-topos $ \XX $ $ n $\emph{-compact} if the global sections functor $ \XX \to \SS$ commutes with filtered colimits of $ n $-truncated objects. In the case $n=\infty$, we simply say that $\XX$ is \emph{compact}. Thus, for $n=-1$ and $ n= 0$ we recover the notions of compactness and strong compactness from above, but for $n\geq 1$ we obtain something new. 

\subsection*{Properness in topos theory}
In topology, there is also a \emph{relative} notion of compactness: that of \emph{properness} of a map of topological spaces. Again, a natural question is what the ($\infty$-)toposic analogue of this concept might be. A possible approach to such a toposic definition of properness is motivated by the observation that proper maps of topological spaces satisfy the \emph{proper base change theorem}. The idea is now to take the proper base change theorem as a \emph{characterisation}, which leads to the following definition:
A geometric morphism $ p_* \colon \XX \to \BB $ of $ \infty $-topoi is called \emph{$n$-proper} if for any commutative square
\[\begin{tikzcd}
	{\WW'} & \WW & \XX \\
	{\ZZ'} & \ZZ & \BB
	\arrow["{p_*}", from=1-3, to=2-3]
	\arrow["{f_*}"', from=2-2, to=2-3]
	\arrow["{q_*}"', from=1-2, to=2-2]
	\arrow["{g_*}", from=1-2, to=1-3]
	\arrow["{f'_*}"', from=2-1, to=2-2]
	\arrow["{g'_*}", from=1-1, to=1-2]
	\arrow["{r_*}"', from=1-1, to=2-1]
\end{tikzcd}\]
of $ \infty $-topoi in which both squares are cartesian, the canonical natural transformation $ f'^* q_* \to r_* g'^* $ is invertible when restricted to $n$-truncated objects. For $n=\infty$, we simply say that $p_\ast$ is \emph{proper}, which is the case considered by Lurie in~\cite[\S~7.3.1]{htt}. For $n=-1$ and $n=0$, this definition recovers the notions of \emph{properness} and \emph{tidiness} of a morphism of $1$-topoi as studied by Moerdijk and Vermeulen~\cite{Moerdijk2000}. The fact that these definitions are reasonable is evidenced by the observation that if $p\colon Y\to X$ is a proper map of topological spaces, then the induced geometric morphism $p_\ast\colon\Shv(Y)\to\Shv(X)$ is $0$-proper~\cite[Example~I.1.9]{Moerdijk2000}, and it is $1$-proper if $p$ is in addition \emph{separated}~\cite[Example~II.1.4]{Moerdijk2000}.
if $Y$ is furthermore completely regular (i.e. a subspace of a compact Hausdorff space), then $p_\ast$ is even proper~\cite[Theorem~7.3.1.16]{lurie2009b}.

\subsection*{Main results}
The notion of properness of a map of $\infty$-topoi is supposed to capture a relative version of compactness. In particular, if the codomain is the final $\infty$-topos $\SS$ of $\infty$-groupoids, one expects to recover the notion of a compact $\infty$-topos. In other words, the global sections functor $\XX\to\SS$ should be proper if and only if $\XX$ is compact. However, this fact is not at all evident from the definitions, and it is one of the main goals in this paper to show that these two notions do in fact agree:
\begin{introthm}
	\label{thm:properBaseChangeOverSpaces}
	Let $ \XX $ be an $ \infty $-topos.
	Then the global sections functor $\Gamma_{\XX}\colon  \XX \to \SS $ is proper if and only if $\XX$ is a compact $\infty$-topos.
\end{introthm}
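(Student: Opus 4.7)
The plan is to deduce Theorem~A as an essentially formal consequence of the paper's main result, announced in the abstract, which characterises proper geometric morphisms $p_\ast\colon\XX\to\BB$ as precisely those preserving colimits indexed by filtered internal $\infty$-categories of $\BB$. Specialising this to $\BB=\SS$ tells us that $\Gamma_\XX$ is proper if and only if it preserves colimits indexed by filtered internal $\infty$-categories of the final $\infty$-topos $\SS$.

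The crucial reduction step is then to identify internal $\infty$-category theory inside $\SS$ with ordinary $\infty$-category theory. Since $\SS$ is the terminal $\infty$-topos, an internal $\infty$-category object of $\SS$ is the same datum as a (small) $\infty$-category, internal filteredness recovers the classical notion of a filtered $\infty$-category, and colimits in $\XX$ indexed by such internal diagrams agree with ordinary filtered colimits. Granting this identification — which belongs to the foundational framework of internal higher category theory developed in the authors' earlier work, and which I would cite rather than reprove — preservation of internal filtered colimits by $\Gamma_\XX$ becomes literally the definition of compactness of $\XX$, and Theorem~A follows immediately.

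The main obstacle is therefore not Theorem~A itself but the main theorem of the paper, and specifically its nontrivial implication that properness entails preservation of all internal filtered colimits, not merely the base change statement built into the definition. Indeed, even the supposedly ``easy'' direction proper $\Rightarrow$ compact is not transparent from the base change definition alone, since filtered colimits of arbitrary (non-subterminal) objects in $\XX$ must be realised as outputs of base change along geometric morphisms in a controlled way; the internal reformulation is exactly what makes this transparent. I would therefore present Theorem~A as a short corollary placed directly after the proof of the main theorem, with the entire substance of the argument housed in that general result.
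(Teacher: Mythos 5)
Your proposal matches the paper's own treatment: Theorem A is obtained purely as the special case $\BB=\SS$ of the main theorem, with all the substance residing in the general properness--compactness equivalence, exactly as you propose. The only point worth making explicit is that internal compactness of $\Gamma_{\XX}$ over $\SS$ a priori concerns filtered $\SS_{/A}$-categories for all spaces $A$ (local systems of $\infty$-categories), not only constant filtered diagrams; this reduces to the classical compactness of $\XX$ because filtered cocontinuity can be checked locally and every object of $\SS$ is covered by its points, a reduction supplied by the cited internal framework.
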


Theorem~\ref{thm:properBaseChangeOverSpaces} is a special case of a more general result which characterises properness of an arbitrary geometric morphism in terms of a relativised notion of compactness.
To make this precise, we need to invoke the theory of $ \infty $-categories internal to an $ \infty $-topos, developed in \cite{Yoneda, Colimits, Cocartesian, PresTop}.
An $ \infty $-category internal to an $ \infty $-topos $ \BB $ can simply be defined as a sheaf of $ \infty $-categories on $ \BB $, i.e. a limit-preserving functor $ \BB^\op \to \Cat_\infty $.
We refer to $ \infty $-categories internal to $ \BB $ as $ \BB $\emph{-categories}. Now to each geometric morphism $f_\ast\colon\XX\to\BB$, one can associate such a $\BB$-category via the assignment $A\mapsto \Over{\XX}{f^\ast(A)}$.
For reasons we will explain later, we denote this $ \BB $-category by $ f_* \Univ[\XX] $. This construction defines a functor $ {(\RTopS)}_{/\BB} \to \Cat(\BB) $, where $\RTopS$ denotes the $\infty$-category of $\infty$-categories and $\Cat(\BB)$ is the $\infty$-category of $\BB$-categories. As a consequence, if we denote the image of $ \id_\BB $ along this functor by $ \Univ_\BB $, the map $f_\ast$ itself determines a morphism of $ \BB $-categories $ f_* \Univ[\XX] \to \Univ_\BB $, which can be regarded as the \emph{internal} global sections functor of the $\BB$-category $f_\ast\Univ[\XX]$.

In \cite[\S 2.1]{PresTop} we introduced the notion of a \emph{filtered} $ \BB $-category (see also Example~\ref{ex:twoExOfInternalClasses}).
Therefore we can now define a geometric morphism $ p_* \colon \XX \to \BB $ to be \emph{compact} if the internal global sections functor $p_\ast\Univ[\XX]\to\Univ[\BB]$ commutes with such internally filtered colimits (see Example~\ref{ex:twoExOfInternalClasses} (3)).
Our main result is now the following generalisation of Theorem~\ref{thm:properBaseChangeOverSpaces}\footnote{That such a theorem should hold is already remarked in \cite[Remark 7.3.1.5]{htt}}:

\begin{introthm}
	\label{thm:MainTheorem}
	A geometric morphism $p_* \colon \XX \to \BB $ is proper if and only if it is compact.
\end{introthm}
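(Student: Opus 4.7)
The plan is to reformulate both conditions as properties of the single internal functor of $\BB$-categories $p_*\Univ[\XX]\to\Univ[\BB]$, and then prove the two implications separately. Both proper morphisms and geometric morphisms that preserve internally filtered colimits should be stable under base change along an arbitrary map $A\to\BB$, so it suffices to argue in the internal language, where the hypothesis and the conclusion become parallel conditions on a single morphism in $\Cat(\BB)$.

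For the ``only if'' direction, assume $p_*$ is proper. Since proper geometric morphisms are stable under pullback, every slice $p_A\colon\Over{\XX}{p^*A}\to\Over{\BB}{A}$ is again proper, and these slices assemble into the internal functor $p_*\Univ[\XX]\to\Univ[\BB]$. Given a diagram $F\colon I\to p_*\Univ[\XX]$ over a filtered $\BB$-category $I$, I would use internal unstraightening to realise it as a cartesian square of $\infty$-topoi whose base-change transformation computes the colimit-comparison map $\colim_I p_*F\to p_*\colim_I F$. Proper base change then forces this comparison to be invertible. The key auxiliary input is that colimits over internally filtered $\BB$-categories admit a computation as left Kan extensions along structural geometric morphisms built from the unstraightening of $I$, so that commuting them past $p_*$ reduces to a single instance of proper base change.

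For the ``if'' direction, suppose the internal functor preserves internally filtered colimits, and consider a cartesian square of $\infty$-topoi with bottom edge $f_*\colon\ZZ\to\BB$. I would present the base-change transformation $f^*p_*\to q_*g^*$ itself as a colimit-comparison map for an internal filtered diagram --- intuitively, by internally resolving $\ZZ$ over $\BB$ as a filtered colimit of slice topoi $\Over{\BB}{A}$, for each of which base change is automatic because slices of $\BB$ pull back to slices of $\XX$. Applying compactness then transports base change across this resolution. The main obstacle is precisely this last step: presenting a general cartesian square as a filtered internal colimit of tautological slice squares is delicate and genuinely depends on the internal category theory of $\BB$, and verifying that the notion of ``filteredness'' from \cite{PresTop} is broad enough to capture all such resolutions while being restrictive enough that the corresponding colimits are preserved by $p_*$ is where the technical heart of the theorem lies.
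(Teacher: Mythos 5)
The ``only if'' direction of your sketch is essentially the paper's argument (Lemma~\ref{lem:properImpliesCompact}), although it can be made much more direct than your unstraightening formulation: filteredness of $\I{I}$ \emph{means} that $\colim_{\I{I}}\colon\iFun(\I{I},\Univ[\BB])\to\Univ[\BB]$ is left exact, so $\diag$ is a geometric morphism of $\BB$-topoi, the square relating $p_\ast\Univ[\XX]$ and $\iFun(\I{I},p_\ast\Univ[\XX])$ is a pullback in $\RTop(\BB)$, and proper base change applied to that single square is literally the colimit-comparison map. The genuine gap is in the ``if'' direction, and it is exactly the step you flag yourself: ``internally resolving $\ZZ$ over $\BB$ as a filtered colimit of slice topoi $\Over{\BB}{A}$'' is not available. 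An arbitrary $\infty$-topos over $\BB$ --- even a subtopos of $\BB$ --- is not a filtered colimit of \'etale topoi $\Over{\BB}{A}$ in any sense that would make base change along each piece tautological, and no construction in the internal theory produces such a resolution. Since this is the entire content of the implication, the proposal does not prove the theorem.

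What the paper does instead is a different decomposition. Using $\RTop(\BB)\simeq\Over{(\RTopS)}{\BB}$, one factors $f_\ast$ as a fully faithful inclusion $\ZZ\into\Fun_{\BB}(\I{C}^\op,\Univ[\BB])$ followed by the projection to $\BB$. The presheaf stage requires showing that compactness of $p_\ast$ is inherited by $\Fun_{\BB}(\I{C}^\op,p_\ast\Univ[\XX])\to\Fun_{\BB}(\I{C}^\op,\Univ[\BB])$ (Corollary~\ref{cor:PresheafCategoriesCompact}, via a locality argument reducing to the \'etale case); the subtopos stage (Proposition~\ref{prop:SpecialCaseOfBeckChevalley}) is handled not by resolving $\ZZ$ but by writing the localisation endofunctor $j'_\ast(j')^\ast$ as a transfinite filtered colimit of plus-construction functors indexed by the cofiltered internal category $\Univ[S]$ attached to a bounded local class closed under finite limits (Proposition~\ref{prop:comparisonofcolimits}). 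Even there a subtlety arises that your sketch does not engage with: this formula is a priori a filtered colimit \emph{internal to $\XX$}, while compactness only controls colimits indexed by filtered $\BB$-categories (cf.\ Warning~\ref{warning:CompactIsNotExternalColims}); rewriting it $\BB$-internally costs the paper the toposic cone construction and properness of closed immersions. Finally, properness demands left adjointability of the left square in \emph{every} composable pair of cartesian squares, not just invertibility of the mate for a single pullback of $p_\ast$; your proposal never addresses this second layer, which the paper settles using compactness of the presheaf stage together with fully faithfulness of $j'_\ast$.
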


The 1-categorical analogue of this result is due to T.\ Lindgren \cite{lindgren1984} and appears in \cite[Theorem 4.8]{Moerdijk2000}.
However we want to emphasise that proper morphisms of $ \infty $-topoi are not just a generalisations of tidy morphisms of $ 1$-topoi.
Instead, the condition of properness is genuinely stronger.
For example, for a compact $ 1 $-topos $ \XX $, the associated $1$-localic $ \infty $-topos $ \Shv(\XX) $ is often not compact (see Remark~\ref{rem:Spec(R)Counterexample}).

Theorem~\ref{thm:MainTheorem} makes it possible to enlarge the class of examples from topology which give rise to proper maps of $\infty$-topoi. As we mentioned above, Lurie showed that every proper map $p\colon Y\to X$ of topological spaces in which $Y$ is completely regular gives rise to a proper morphism between the associated sheaf $\infty$-topoi. However, since (toposic) properness is an entirely relative notion, it is somewhat surprising that there are constraints on the space $Y$ and not just on the map $p$. Instead, in light of Lurie's result that the $\infty$-topos of sheaves on a compact Hausdorff space is compact, one would expect that every proper and \emph{separated} map $p\colon Y\to X$ of topological spaces gives rise to a proper morphism of $\infty$-topoi\footnote{Further evidence that such a statement is expected to be true is that proper and separated maps of topological spaces are known to satisfy the proper base change theorem with abelian coefficients~\cite{Schnuerer2016}.}. We will leverage Theorem~\ref{thm:MainTheorem} in order to show that this is indeed the case:
\begin{introthm}
	\label{thm:ProperMapsTopology}
	Every proper and separated map $p\colon Y\to X$ of topological spaces gives rise to a proper morphism of $\infty$-topoi $p_\ast\colon \Shv(Y)\to\Shv(X)$.
\end{introthm}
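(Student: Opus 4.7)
The plan is to apply Theorem~\ref{thm:MainTheorem}, which reduces the task to showing that $p_\ast$ is \emph{compact}: the internal global sections morphism $p_\ast\Univ[\Shv(Y)]\to\Univ[\Shv(X)]$ preserves internally filtered colimits. The separatedness of $p$ enters through the fact that the fibres $p^{-1}(x)$ are compact Hausdorff, hence completely regular, for every $x\in X$; this is what will eventually make Lurie's Theorem~7.3.1.16 from \cite{lurie2009b} available on suitable neighbourhoods.

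The first step is to verify that compactness of a geometric morphism is \emph{local on the target}: given an open cover $\{U_i\hookrightarrow X\}$ such that each base change $p_\ast|_{U_i}\colon\Shv(p^{-1}(U_i))\to\Shv(U_i)$ is compact, $p_\ast$ itself is compact. The key input is that the universe $\Univ[\Shv(X)]$ restricts along the étale geometric morphism $\Shv(U_i)\to\Shv(X)$ to $\Univ[\Shv(U_i)]$, and that both internal filteredness and preservation of internally filtered colimits can be checked after descent along such a cover.

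The second step is to construct, for every $x\in X$, an open neighbourhood $U\ni x$ and a factorisation
\[ p^{-1}(U)\hookrightarrow \widetilde Y\to U \]
of the restricted map, in which $\widetilde Y\to U$ is proper with completely regular source and $p^{-1}(U)\hookrightarrow\widetilde Y$ is a locally closed embedding; a natural candidate is the closure of the graph of a suitable continuous map $p^{-1}(U)\to K$ into $U\times K$ for a compact Hausdorff space $K$. By Lurie's Theorem~7.3.1.16 the induced geometric morphism $\Shv(\widetilde Y)\to\Shv(U)$ is then proper, hence compact by Theorem~\ref{thm:MainTheorem}. Combined with a separate verification that a locally closed embedding of topological spaces induces a geometric morphism of sheaf $\infty$-topoi whose composition with a proper map remains compact, this yields compactness of $p_\ast|_U$, and the first step then upgrades this to global compactness of $p_\ast$.

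The main obstacle is the local compactification in the second step. Lurie's original argument uses a \emph{global} compactification, which requires the source $Y$ itself to be completely regular; in our setting only the fibres of $p$ are. One must therefore build the compactification locally on $X$, exploiting separatedness of $p$ to produce enough continuous functions on each fibre for an embedding into a compact Hausdorff space to exist, and properness to control how the embedded image sits over $U$. Pinning down the precise descent statement for the internal universe in the first step, and matching it with the compatibility of locally closed inclusions and compactness of geometric morphisms in the second, will constitute the technical core of the proof.
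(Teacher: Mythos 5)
Your first step is sound: compactness of a geometric morphism can indeed be checked over an open cover of the target, since the base-change squares along the \'etale morphisms $\Shv(U_i)\to\Shv(X)$ have invertible mates and the pullback functors are jointly conservative; this is exactly the situation handled by Proposition~\ref{prop:ProperIsH-Local}. The fatal problem is your second step. You ask, for each $x\in X$, for an open $U\ni x$ and a factorisation $p^{-1}(U)\hookrightarrow\widetilde Y\to U$ with $\widetilde Y$ completely regular, $\widetilde Y\to U$ proper, and the first map a (locally closed) embedding. Complete regularity in the sense of \cite[Definition~7.3.1.12]{htt} means being a subspace of a compact Hausdorff space, and it is inherited by arbitrary subspaces; so the existence of such a factorisation would force $p^{-1}(U)$ itself to be completely regular. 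This fails already for $p=\id_X$ with $X$ not Hausdorff (the identity is proper and separated, and a non-Hausdorff $X$ may have no completely regular neighbourhood of a bad point whatsoever). The point you lean on --- that the \emph{fibres} of a proper separated map are compact Hausdorff --- does not help: complete regularity is a condition on the total space $p^{-1}(U)$, and good fibres over a pathological base $U$ give no control on it. So no amount of care in constructing $K$ or the graph closure can produce $\widetilde Y$; the reduction to Lurie's Theorem~7.3.1.16 is structurally blocked, which is precisely why that theorem carries the global hypothesis on $Y$ that Theorem~\ref{thm:ProperMapsTopology} is meant to remove.

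The paper's proof takes a genuinely different route that avoids compactifying the total space altogether. One regards the assignment $U\mapsto\OO(p^{-1}(U))$ as an internal locale $\I{O}_X(Y)$ in $\Shv(X)$; properness and separatedness of $p$ say exactly that this internal locale is ``compact Hausdorff'' internally, and the classical argument that the frame of opens of a compact Hausdorff space is a retract of a coherent frame (via opens with compact closure and their ideal completion) is constructive enough to be carried out internally in $\Shv(X)$. Hence $\I{O}_X(Y)$ is a stably compact $\Shv(X)$-locale, so by Example~\ref{ex:stably_compact_locales} the $\Shv(X)$-topos $\IShv[\Shv(X)](\I{O}_X(Y))$ is compact; since it recovers $p_\ast\colon\Shv(Y)\to\Shv(X)$ on global sections, Theorem~\ref{thm:CompactProperBaseChange} concludes. (Remark~\ref{rem:Kosstrategy} records that a reduction to the case $X=\ast$ in the spirit of your proposal is available, but only under the additional hypothesis that $Y$ is completely regular.) If you want to salvage your outline, you would have to replace the local compactification of $p^{-1}(U)$ by some internal or relative compactification argument --- which is in effect what the internal-locale proof does.
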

Note that if $p\colon Y\to X$ is proper, then $Y$ being completely regular forces $p$ to be separated. Therefore, Theorem~\ref{thm:ProperMapsTopology} constitutes a generalisation of Lurie's result.

In contrast to the topological case, a proper morphism $p\colon X\to S$ of schemes usually does not give rise to a proper map $p_\ast\colon X_{\et}^{\hyp}\to S_{\et}^{\hyp}$ between the associated $\infty$-topoi of \'etale hypersheaves. Hence, the relevance of Theorem~\ref{thm:MainTheorem} in algebraic geometry is somewhat limited. However, the geometric morphism $p_\ast$ does satisfy proper base change when using coefficients in the derived $\infty$-category $\mathbf{D}(R)$ of any torsion ring $R$. Thus, one might speculate that $p_\ast$ satisfies an $R$-linear version of properness. Such a result would easily follow from a variant of Theorem~\ref{thm:MainTheorem} in which the notions of properness and compactness are replaced by certain $\mathbf{D}(R)$-linear versions thereof, which are weaker than plain properness and compactness. Hence, it could be of interest for applications in algebraic geometry to generalise Theorem~\ref{thm:MainTheorem} by allowing coefficients in any presentable $\infty$-category $\EE$. This leads us to our last major result in this paper: there are evident generalisations of properness and compactness of a geometric morphism to $\EE$-properness and $\EE$-compactness, respectively, and these two notions turn out to be equivalent under mild conditions on $\EE$:
\begin{introthm}
	\label{thm:properMorphismsCoefficients}
	For every compactly generated $\infty$-category $\EE$, a geometric morphism $p_\ast\colon \XX\to\BB$ is $\EE$-proper if and only if it is $\EE$-compact.
\end{introthm}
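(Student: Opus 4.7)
The strategy is to prove Theorem~\ref{thm:properMorphismsCoefficients} by reducing it to Theorem~\ref{thm:MainTheorem} using the compactly generated structure of $\EE$.

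For the direction $\EE$-proper $\Rightarrow$ $\EE$-compact, I would argue formally as in the unenriched case: any internally filtered colimit along a filtered $\BB$-category $I$ can be expressed via the associated étale geometric morphism $\BB_{/I}\to\BB$, and base change with $\EE$-coefficients along this morphism specialises to the commutation with internally filtered colimits that defines $\EE$-compactness. This direction should be entirely formal.

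For the more substantial direction $\EE$-compact $\Rightarrow$ $\EE$-proper, the essential input is $\EE=\Ind(\EE^\omega)$. I would proceed by a two-stage reduction. First, since both sides of the base change natural transformation $f'^\ast q_\ast\to r_\ast g'^\ast$ preserve all small colimits in the $\EE$-variable (as left adjoints, resp.\ as functors hypothetically commuting with colimits by $\EE$-compactness applied componentwise), it suffices to verify the base change on generators of the form $U\otimes c$ with $U\in\XX$ and $c\in\EE^\omega$. Second, for fixed compact $c$, the corepresented functor $\Map_{\EE}(c,-)\colon\EE\to\SS$ preserves filtered colimits, so postcomposition with it converts the $\EE$-coefficient question into a question about a modified $\SS$-coefficient geometric morphism. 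The $\EE$-compactness hypothesis, combined with compactness of $c$, should yield that this modified morphism is compact in the sense of Theorem~\ref{thm:MainTheorem}, and applying Theorem~\ref{thm:MainTheorem} then delivers the required instance of base change. Since $\EE^\omega$ generates $\EE$, this is enough to conclude $\EE$-properness.

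The main obstacle will lie in setting up a clean internal framework: one needs a theory of $\EE$-valued $\BB$-categories and $\EE$-valued internal sheaves in which base change, internal global sections, and internal filtered colimits all behave compatibly with tensoring by $\EE$. Concretely, the delicate point is ensuring that for $c\in\EE^\omega$ the hypothesis that the internal functor $p_\ast\Univ[\XX]\otimes\EE\to\Univ[\BB]\otimes\EE$ commutes with internally filtered colimits can be translated, via the corepresenting functor associated to $c$, into ordinary internal compactness of an auxiliary geometric morphism to which Theorem~\ref{thm:MainTheorem} applies. Once this translation is in place, the proof reduces to a diagrammatic chase.
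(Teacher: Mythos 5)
There is a genuine gap, and it sits exactly where your sketch is vaguest: the claimed reduction to Theorem~\ref{thm:MainTheorem}. First, the step ``it suffices to verify base change on generators $U\otimes c$'' is not justified. The mate transformation has the form $(f')^\ast q_\ast\to r_\ast g'^\ast$ (tensored with $\EE$), and both composites involve right adjoints, which do not preserve arbitrary colimits; your parenthetical that they commute with colimits ``by $\EE$-compactness applied componentwise'' is circular, since $\EE$-compactness only concerns internally \emph{filtered} colimits of the global sections functor and says nothing about cocontinuity of $q_\ast$ in an arbitrary pullback square. Second, and more fundamentally, postcomposing with $\map{\EE}(c,-)$ for $c\in\EE^{\compact}$ does not produce a ``modified geometric morphism of $\infty$-topoi'' to which Theorem~\ref{thm:MainTheorem} could be applied: for general compactly generated $\EE$ (e.g.\ $\EE=\mathbf{D}(R)$) the category $\XX\otimes\EE\simeq\Fun^{\lex}((\EE^{\compact})^\op,\XX)$ is not an $\infty$-topos, the tensored squares are not pullbacks in $\RTopS$, and under the identification of Remark~\ref{rem:TensorIsLexFunctors} evaluation at $c$ simply returns the \emph{original} unenriched mate evaluated at objects $F(c)$ for $F$ left exact. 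So your route would need the unenriched mate to be invertible, i.e.\ properness of $p_\ast$ itself, which by Theorem~\ref{thm:CompactProperBaseChange} requires genuine compactness --- strictly stronger than $\EE$-compactness (coherent $\infty$-topoi are $\SS_{\leq n}$-compact for all $n$ but not compact, cf.\ Remark~\ref{rem:Spec(R)Counterexample}). There is no mechanism in your sketch by which $\EE$-compactness plus compactness of $c$ upgrades to compactness of an actual geometric morphism.

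What is actually needed, and what the paper does, is to re-run the whole proof of Theorem~\ref{thm:CompactProperBaseChange} with coefficients rather than quote it: an $\EE$-linear local-detection statement (Proposition~\ref{prop:ProperIsH-LocalLinear}, where the hypotheses must now be imposed locally because the global-sections detection Lemma~\ref{lem:leftAdjointabilityDeterminedByGlobalSections} genuinely fails with coefficients --- another point your sketch glosses over), an $\EE$-linear version of the reduction to functor $\BB$-categories (Corollary~\ref{cor:PresheafCategoriesCompactLinear}), and then, in the remaining case of a fully faithful $j_\ast$, the identification $\XX\otimes\EE\simeq\Fun^{\lex}((\EE^{\compact})^\op,\XX)$ together with the transfinite localisation formula $j_\ast j^\ast\simeq(-)^{\sh}_{\iota}$ of Proposition~\ref{prop:comparisonofcolimits}, computed by pointwise postcomposition and commuted past $\Gamma_{p_\ast\Univ[\XX]}\otimes\EE$ using the $\EE$-compactness hypothesis. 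Your instinct that compact generation enters via $\Fun^{\lex}((\EE^{\compact})^\op,-)$ and filtered-colimit arguments is correct, but the argument must be carried out internally at the level of these lex functor categories; it cannot be outsourced to the unenriched theorem via corepresentables.
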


\subsection*{Outline of contents}
We begin in Chapter~\ref{sec:Recollection} by recalling the basics of $ \BB $-category theory.
The goal is to gather just enough material so that also the reader who isn't familiar with our previous work~\cite{Yoneda, Colimits, Cocartesian, PresTop} can follow our arguments.

Most of Chapter~\ref{sec:ProperMorphisms} is dedicated to the study of proper and compact morphisms of $\infty$-topoi and the proof of Theorem~\ref{thm:MainTheorem}.
Let us briefly outline the strategy of the proof.
It is mostly an adaption of the strategy for the $ 1 $-toposic result, given in \cite[p. 673-676]{johnstone2002}.
However, there are some complications coming from the fact that as opposed to the $1$-toposic case, we cannot rely on the existence of sites.
The non-trivial part of the theorem is to show that every pullback square
\[\begin{tikzcd}
	{\WW} & \XX \\
	\ZZ & \BB
	\arrow["{p_*}", from=1-2, to=2-2]
	\arrow["q_\ast", from=1-1, to=2-1]
	\arrow["g_\ast", from=1-1, to=1-2]
	\arrow["{f_*}"', from=2-1, to=2-2]
\end{tikzcd}\]
in $ \RTopS$ is left adjointable whenever $p_\ast$ is compact. We can reduce to the case where $f_\ast$ is fully faithful as follows: as explained above, the functor $f_\ast$ determines a map $f_\ast\Univ[\ZZ]\to \Univ[\BB]$ of $\BB$-categories. By using the theory of $\BB$-topoi that we developed in~\cite{PresTop}, we may find a $\BB$-category $\I{C}$ such that this map factors into a composition $f_\ast\Univ[\ZZ]\into\iFun(\I{C},\Univ[\BB])\to\Univ[\BB]$ (where $\iFun(-,-)$ is the internal hom of $\BB$-categories) in which the first map is fully faithful and the second map is given by taking internal limits. By evaluating these maps of $\BB$-categories at the final object $1\in\BB$, one thus obtains a factorisation of $f_\ast$ into the composition $ \ZZ \into \Fun_\BB(\I{C},\Univ[\XX]) \to \BB $, splitting up our pullback square into two.
For the right square, base change will be essentially obvious and after replacing $ \BB $ with $ \Fun_\BB(\I{C},\Univ[\BB]) $ (which is again an $\infty$-topos), we have thus reduced to the case where $f_\ast$ is the inclusion of a subtopos.
In this particular case, it turns out that one can write both $ f^* $ and $g^\ast$ as internal colimits indexed by certain filtered $ \BB $-categories which are compatible in a suitable way. Since $ p_* $ is compact, this functor commutes with these colimits, which allows us to deduce the base change isomorphism in this case.
An interesting aspect of our strategy is that, even in the special case of Theorem~\ref{thm:properBaseChangeOverSpaces}, our proof heavily relies on using methods from internal higher category theory.

In \S~\ref{sec:Topology}, we discuss our main application of Theorem~\ref{thm:MainTheorem}, that every proper and separated map of topological spaces gives rise to a proper morphism of $\infty$-topoi, i.e.\ Theorem~\ref{thm:ProperMapsTopology}. 
As mentioned above, Lurie already proves this result in the special case where the domain of the map is completely regular. Our proof strategy is quite different and arguably more direct.

Finally, in~\S~\ref{sec:coefficients} we discuss how our arguments for the proof of Theorem~\ref{thm:MainTheorem} can be generalised to show the analogous version with coefficients in a compactly generated $\infty$-category, i.e.\ Theorem~\ref{thm:properMorphismsCoefficients}.

\subsection*{Acknowledgements}
We would like to thank our respective advisors Rune Haugseng and Denis-Charles Cisinksi for their advice and support while writing this article.
We are also grateful to Mathieu Anel and Jonathan Weinberger for interesting discussions about the content of this paper.
In particular, they pointed out a critical mistake in an earlier draft of this article and suggested a strategy for correcting it. We could not have completed the paper without their invaluable help.
Furthermore, we would like to thank Ko Aoki and Marco Volpe for providing useful insights about applications of the main theorem in topology.
The first-named author was partially supported by the project Pure Mathematics in Norway, funded by Trond Mohn Foundation and Tromsø Research Foundation.
The second-named author gratefully acknowledges support from the SFB 1085 Higher Invariants in Regensburg, funded by the DFG.

\addtocontents{toc}{\protect\setcounter{tocdepth}{2}}

\section{Recollection of $\BB$-category theory}
\label{sec:Recollection}
In this chapter we give a brief account on the language of \emph{$\BB$-categories}, i.e.\ the theory of $\infty$-categories internal to an $\infty$-topos $\BB$. We will limit our exposition to only those aspects of the framework that we will make use of in the main body of the paper. For a more comprehensive discussion, we refer the reader to our paper series~\cite{Yoneda, Colimits, Cocartesian, PresTop}.

\subsection{Main definitions and examples}
We begin our recollection by providing the definition of $\BB$-categories as well as the most important procedures to construct examples of these objects.
\begin{definition}[{\cite[Definition~3.2.4 and~Proposition~3.5.1]{Yoneda}}]
	\label{def:BCats}
	A \emph{$ \BB $-category} $ \I{C} $ is a sheaf of $ \infty $-categories on $ \BB $, i.e. a (small) limit-preserving functor $\I{C}\colon \BB^\op\to\CatS$. Similiarly, a sheaf of large $ \infty $-categories $\I{D} \colon \BB^\op \to \CatSS$ is called a \emph{large} $ \BB $-category. 	If no confusion can arise we will often omit specifying the relative size. 
	A functor $f\colon \I{C}\to\I{D}$ between two $ \BB $-categories is simply a natural transformation of sheaves.
	We will denote the $ \infty $-category of $ \BB $-categories by $ \Cat(\BB) = \Fun^{\lim}(\BB^\op,\CatS) $.
\end{definition}

\begin{remark}
	By the universal property of the $ \infty $-category of spaces $ \mathcal{S} $, we have an equivalence 
	\[
		\Fun^{\lim}(\mathcal{S}^{\op}, \CatS)	\simeq \Cat_\infty,
	\]
	so that an $ \SS $-category is just an $ \infty $-category.
\end{remark}

\begin{example}\label{ex:MainList}
	There are a number of general procedures to construct examples of $\BB$-categories:
	 \begin{enumerate}
	 	\item Every object $ A \in \BB $ gives rise to a $ \BB $-category by means of the Yoneda embedding
	 	\begin{equation*}
		h_{\BB} \colon \BB \simeq \Fun^{\lim}(\BB^\op,\SS) \into \Cat(\BB).
	 	\end{equation*}
	 	We refer to the $\BB$-categories which are contained in the essential image of $h_{\BB}$ as \emph{$\BB$-groupoids}. To every $\BB$-category $\I{C}$ one can associated its \emph{core} $\BB$-groupoid $\I{C}^\core$ by setting $\I{C}^\core(-)= \I{C}(-)^\core$.
		\item Any geometric morphism $ (f^\ast\dashv f_\ast)\colon \colon \AA \leftrightarrows \BB $ of $ \infty $-topoi gives rise to an adjunction
			\[
				\begin{tikzcd}[column sep={7em,between origins}]
					\Cat(\AA) \arrow[r, bend left=30, shift left=4pt, "f^\ast"{name=U}, start anchor=east, end anchor=west] \arrow[from=r, bend left=30, shift left=4pt, "f_\ast"{name=D}, start anchor=west, end anchor=east]\arrow[from=U, to=D, phantom, "\bot"]& \Cat(\BB)
				\end{tikzcd}
			\]
		in which the right adjoint is given by precomposition with $f^\ast\colon \BB\to \AA$ and in which the left adjoint is given by left Kan extension along $f^\ast\colon \BB\to \AA$~\cite[\S~3.3]{Yoneda}.
		
		Specialising the above to the terminal geometric morphism $(\const_{\BB}\dashv\Gamma_{\BB}) \colon \SS\leftrightarrows\BB $, we obtain an adjunction $(\const_{\BB}\dashv\Gamma_{\BB})\colon  \CatS \leftrightarrows \Cat(\BB) $.
		For an arbitrary $ \infty $-category $ \CC $, we refer to the associated $\BB$-category $ \const_{\BB} (\CC) $ as the \emph{constant $ \BB $-category with value $ \CC $}. We will often implicitly identify an $\infty$-category with the associated constant $\BB$-category if there is no chance of confusion.
		
		Specialising the above to the \emph{\'etale geometric morphism} $(\pi_A^\ast\dashv(\pi_A)_\ast)\colon \BB\leftrightarrows\Over{\BB}{A}$ associtated to an object $A\in\BB$, we obtain an induced adjunction $(\pi_A^\ast\dashv(\pi_A)_\ast)\colon \Cat(\BB)\leftrightarrows\Cat(\Over{\BB}{A})$. For any $\BB$-category $\I{C}$, we refer to $\pi_A^\ast\I{C}$ as the \emph{\'etale base change} of $\I{C}$. Note that in this situation, the functor $\pi_A^\ast\colon\Cat(\BB)\to\Cat(\Over{\BB}{A})$ is simply given by precomposition with the forgetful functor $(\pi_A)_!\colon\Over{\BB}{A}\to\BB$.
		\item The $ \infty $-category $ \Cat(\BB) $ is presentable and cartesian closed~\cite[Propositions~3.2.9 and~3.2.11]{Yoneda}.
		It follows that for any two $ \BB $-categories $ \I{C} $ and $ \I{D} $ we can form a new $ \BB $-category $ \iFun(\I{C},\I{D}) $, which we call the \emph{$ \BB $-category of functors from $ \I{C} $ to $ \I{D} $}.
		Furthermore we write $\Fun_\BB(\CC,\DD) = \Gamma_{\BB} \iFun(\I{C},\I{D})$ for the underlying \emph{$ \infty $-category} of functors from $ \I{C} $ to $ \I{D} $.
		\item Unstraightening the codomain fibration $ d_0 \colon \Fun(\Delta^1,\BB) \to \BB $ yields a functor 
		\[
			\BB^\op \to \CatSS\; \; \; A \mapsto \BB_{/A},
		\]
		which preserves small limits since $ \BB $ is an $ \infty $-topos (\cite[Theorem 6.1.3.9]{htt}) and therefore defines a large $\BB$-category.
		We denote this $ \BB $-category by $ \Univ[\BB] $ (or simply $\Univ$ if the ambient $\infty$-topos is clear from the context) and refer to it as the \emph{universe} of $ \BB $.
		We will see below that $ \Univ[\BB] $ is the $\BB$-categorical analogue of the $ \infty $-category of spaces $\SS$.
		
		More generally, every \emph{local class} $S$ of morphisms in $\BB$ (in the sense of~\cite[Definition~6.1.3.8]{htt}) determines a full subcategory $\Univ[S]\into \Univ[\BB]$ (i.e.\ a full subsheaf) that is given for $A\in\BB$ by the full subcategory of $\Over{\BB}{A}$ spanned by the maps $P\to A$ which are contained in $S$. Conversely, every full subsheaf of $\Univ[\BB]$ determines a local class in $\BB$, and the two operations are mutually inverse~\cite[\S~3.9]{Yoneda}.
		\item 
		For any presentable $ \infty $-category $ \DD $, the functor
		\[
			\DD \otimes \Univ[\BB] \colon \BB^\op \to \CatSS; \; \; A \mapsto \BB_{/A} \otimes \DD\simeq\Fun^{\lim}((\Over{\BB}{A})^\op,\DD)
		\]
		(where $-\otimes-$ is the tensor product in $\RPrS$) defines a large $ \BB $-category \cite[Construction A.0.1]{Colimits}. Specialising this construction to $\DD=\CatS$, one obtains the $\BB$-category $\ICat_{\BB}=\CatS\otimes\Univ[\BB]$ of (small) $\BB$-categories.
	\end{enumerate}
\end{example}

Note that by combining Examples~(3) and~(4) from the above list, we may in particular define a (large) $\BB$-category $\IPSh(\I{C})=\iFun(\I{C}^\op,\Univ)$ for every $\BB$-category $\I{C}$ (where $\I{C}^\op$ is defined via $\I{C}^\op(A)=\I{C}(A)^\op$ for every $A\in\BB$). We refer to $\IPSh(\I{C})$ as the $\BB$-category of \emph{presheaves} on $\I{C}$.

\subsection{Left fibrations and Yoneda's lemma}
The notion of left fibrations of $\infty$-categories admits a straightforward $\BB$-categorical analogue:
\begin{definition}
	A functor $p\colon \I{P}\to\I{C}$ of $\BB$-categories is a  \emph{left fibration} if $p(A)$ is a left fibration of $\infty$-categories for every $A\in\BB$. A functor $j\colon \I{I}\to\I{J}$ is \emph{initial} if it is left orthogonal to every left fibration, i.e.\ if for every left fibration $p\colon \I{P}\to\I{C}$ the commutative square
	\begin{equation*}
		\begin{tikzcd}
			\map{\Cat(\BB)}(\I{J}, \I{P}) \arrow[r, "j^\ast"]\arrow[d, "p_\ast"]& \map{\Cat(\BB)}(\I{I}, \I{P})\arrow[d, "p_\ast"]\\
			\map{\Cat(\BB)}(\I{J}, \I{C}) \arrow[r, "j^\ast"]& \map{\Cat(\BB)}(\I{I}, \I{C})
		\end{tikzcd}
	\end{equation*}
	is a pullback.
	
	Dually, $p$ is said to be a \emph{right fibration} if $p(A)$ is a right fibration for every $A\in\BB$., and $j$ is \emph{final} if it is left orthogonal to every right fibration.
\end{definition}
The significance of right and left fibrations is rooted in the fact that they are classified by functors into the universe $\Univ$ of $\BB$. More precisely, let $\LFib(\I{C})$ be the full subcategory of $\Over{\Cat(\BB)}{\I{C}}$ that is spanned by the left fibrations. If $f\colon \I{C}\to\I{D}$ is a functor, then one obtains a pullback map $f^\ast\colon \LFib(\I{D})\to\LFib(\I{C})$, so that one may regard $\LFib(-)$ as a functor $\Cat(\BB)^\op\to\CatSS$. Dually, one can define a functor $\RFib(-)\colon \Cat(\BB)^\op\to\CatSS$. One then obtains the following $\BB$-categorical version of the \emph{straightening equivalence}:
\begin{proposition}[{\cite[\S~4.5]{Yoneda}}]
	\label{prop:straightening}
	There is an equivalence of $\infty$-categories
	\begin{equation*}
		\Fun_{\BB}(\I{C},\Univ[\BB])\simeq\LFib(\I{C})
	\end{equation*}
	which is natural in $\I{C}\in\Cat(\BB)$. Dually, one has an equivalence
	\begin{equation*}
		\PSh[\BB](\I{C})\simeq\RFib(\I{C})
	\end{equation*}
	that is natural in $\I{C}\in\Cat(\BB)$.
\end{proposition}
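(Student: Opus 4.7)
The plan is to adapt the classical straightening–unstraightening equivalence to the internal setting of $\BB$-categories. I would first construct a \emph{universal left fibration} $p\colon\Univ_{\ast}\to\Univ[\BB]$. Sectionwise at $A\in\BB$, this should send $(X\to A)\in\Over{\BB}{A}$ to the slice $\Over{\BB}{X}$, with total space built from a suitable full subcategory of $\Fun(\Delta^1,\BB)$ living over $\BB$; by the classical straightening theorem each $p(A)$ is a left fibration of $\infty$-categories, so $p$ is a left fibration in $\Cat(\BB)$. Pulling back $p$ along a functor $f\colon\I{C}\to\Univ[\BB]$ then yields a left fibration over $\I{C}$, defining a map
$$\Phi_{\I{C}}\colon\Fun_\BB(\I{C},\Univ[\BB])\to\LFib(\I{C}),\qquad f\mapsto f^\ast p,$$
which is manifestly natural in $\I{C}\in\Cat(\BB)$.

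To show $\Phi_{\I{C}}$ is an equivalence for every $\BB$-category $\I{C}$, I would argue that both sides, viewed as functors $\Cat(\BB)^\op\to\CatSS$, send colimits in $\I{C}$ to limits. For the source this is automatic, since $\Fun_\BB(-,\Univ[\BB])=\Gamma_\BB\iFun(-,\Univ[\BB])$ is corepresentable and $\Gamma_\BB$ is a right adjoint. For the target one needs a descent result for left fibrations, namely that $\LFib$ is a sheaf on $\Cat(\BB)$. Granting this, it suffices to check the equivalence on a generating family. Since $\Cat(\BB)$ is generated under colimits by $\BB$-categories of the form $h_\BB(A)\times\const_\BB(\Delta^n)$, and since the étale adjunction $\pi_A^\ast\dashv(\pi_A)_\ast$ combined with the identification $\pi_A^\ast\Univ[\BB]\simeq\Univ[\Over{\BB}{A}]$ reduces the case of $h_\BB(A)$ to the terminal $\BB$-category, one is left with $\I{C}=\const_\BB(\Delta^n)$. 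In this case both sides are computed by comparing simplicial diagrams in $\Univ[\BB]$ with sectionwise families of classical left fibrations over $\Delta^n$, and the equivalence follows from the classical straightening theorem applied at each $A\in\BB$ together with the naturality in $A$.

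The dual statement for right fibrations and $\PSh[\BB](\I{C})=\Fun_\BB(\I{C}^\op,\Univ[\BB])$ then follows by passing to $\I{C}^\op$. The main obstacle I expect is the descent step for $\LFib$, i.e.\ verifying that $\LFib\colon\Cat(\BB)^\op\to\CatSS$ is limit-preserving, which amounts to showing that the property of being a left fibration satisfies descent both in the $\BB$-direction and in the $\Cat$-direction. This is the internal analogue of the technically hardest portion of Lurie's proof of straightening, and is precisely where one needs the full force of $\BB$ being an $\infty$-topos rather than merely a presentable $\infty$-category.
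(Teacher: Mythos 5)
First, a point of comparison: this paper contains no proof of Proposition~\ref{prop:straightening} at all --- it is recalled verbatim from \cite[\S~4.5]{Yoneda}, so the only thing to measure your sketch against is the argument carried out there. Your outline does follow the same broad architecture as that proof: a comparison map obtained by pulling back a universal left fibration, limit-preservation of both functors $\Cat(\BB)^\op\to\CatSS$, reduction to the generators $h_{\BB}(A)\times\const_{\BB}(\Delta^n)$, étale base change to dispose of the factor $h_{\BB}(A)$, and a sectionwise appeal to classical straightening for $\const_{\BB}(\Delta^n)$. The dual statement for right fibrations and presheaves does indeed follow formally by passing to opposites.

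That said, there are genuine gaps. (1) Your description of the universal left fibration is wrong: a left fibration has $\infty$-groupoid fibers, so its sectionwise fiber over an object $(X\to A)\in\Over{\BB}{A}$ cannot be the slice $\Over{\BB}{X}$. The correct object, recalled in Proposition~\ref{prop:universalLeftFibration} of this very paper, is $(\pi_{1_{\Univ}})_!\colon\Under{\Univ}{1_{\Univ}}\to\Univ$, whose sectionwise fiber over $X\to A$ is the space of sections of $X\to A$ (internally, ``$X$ itself''). (2) In the generator case it is not enough to observe that both sides are abstractly equivalent to $\Fun(\Delta^n,\BB)$, which is what sectionwise classical straightening plus naturality in $A$ gives you; since you argue by showing that a fixed natural transformation $\Phi$ between limit-preserving functors is an equivalence on generators, you must identify $\Phi_{\const_{\BB}(\Delta^n)}$ itself --- pullback of the internal universal left fibration --- with the classical unstraightening sectionwise, a Yoneda-type computation your sketch does not address. (3) Most importantly, the descent statement that $\LFib\colon\Cat(\BB)^\op\to\CatSS$ carries colimits to limits, which you correctly single out, is precisely where the weight of the theorem lies in \cite{Yoneda} (it is established there via the theory of covariant equivalences and an internal Grothendieck construction); flagging it as ``the main obstacle I expect'' leaves the essential content of the proof open, so as it stands your proposal is a plausible reduction of the theorem to its hardest ingredient rather than a proof.
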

The straightening equivalence can be  understood quite explicitly. In fact, let $1_{\Univ}\colon 1_{\BB}\to \Univ$ be the map that encodes the final object $1_{\BB}\in \BB$ (via the equivalence $\Fun_{\BB}(1_{\BB},\Univ)\simeq\Gamma_{\BB}\Univ\simeq\BB$), and let $(\pi_{1_{\Univ}})_!\colon\Under{\Univ}{1_{\Univ}}\to\Univ$ be the pullback of $(d_1,d_0)\colon\iFun(\Delta^1,\Univ)\to\Univ\times\Univ$ along the functor $1_{\BB}\times\id\colon\Univ\to\Univ\times\Univ$. Then $(\pi_{1_{\Univ}})_!$ is the  \emph{universal} left fibration, in the following sense:
\begin{proposition}[{\cite[\S~4.6]{Yoneda}}]
	\label{prop:universalLeftFibration}
	For every functor $F\colon \I{C}\to\Univ$, the left fibration $\Under{\I{C}}{F}\to\I{C}$ that corresponds to $F$ via the straightening equivalence is given by the pullback of $(\pi_{1_{\Univ}})_!$ along $F$.
\end{proposition}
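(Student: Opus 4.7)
The plan is to use naturality of the straightening equivalence from Proposition~\ref{prop:straightening} to reduce the statement to the identification of a single universal object. By convention, $\Under{\I{C}}{F}\to\I{C}$ denotes the left fibration corresponding to $F\colon\I{C}\to\Univ$ under straightening. Specialising to $\I{C}=\Univ$ and $F=\id_\Univ$ produces a canonical left fibration $\widehat{\Univ}\to\Univ$. Since the equivalence of Proposition~\ref{prop:straightening} is natural in $\I{C}$, precomposition with an arbitrary $F\colon\I{C}\to\Univ$ on the left-hand side must correspond to pullback along $F$ on the right-hand side, whence $\Under{\I{C}}{F}\simeq F^\ast\widehat{\Univ}$. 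Consequently, the proposition reduces to constructing an equivalence $\widehat{\Univ}\simeq(\pi_{1_{\Univ}})_!$ of left fibrations over $\Univ$.

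To carry out this identification, I would argue that $(\pi_{1_{\Univ}})_!$ is classified by $\id_\Univ$ by comparing fibres. At a point $c\colon 1_\BB\to\Univ$ (equivalently, an object $c\in\BB$), the fibre of $(\pi_{1_{\Univ}})_!$ over $c$ is by construction the pullback of $(d_1,d_0)\colon\iFun(\Delta^1,\Univ)\to\Univ\times\Univ$ along $(1_\BB,c)$, which is exactly the mapping $\BB$-groupoid $\map{\Univ}(1_\Univ,c)$. Since $1_\Univ$ represents the final object of $\BB$ under the identification $\BB\simeq\Grpd(\BB)$, a Yoneda-type computation in $\Univ$ identifies this mapping object with $c$ itself, matching the fibre that any left fibration classifying $\id_\Univ$ must have. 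A parametrised version of this computation, carried out sectionwise at each $A\in\BB$ and extended over all points of $\Univ$, upgrades the fibrewise agreement to an equivalence of $\BB$-categories over $\Univ$.

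The main technical obstacle is precisely this functorial promotion. A clean way to accomplish it is to verify the universal property of $(\pi_{1_{\Univ}})_!$ directly: one shows that the assignment $F\mapsto F^\ast(\pi_{1_{\Univ}})_!$ is inverse to the straightening equivalence $\Fun_\BB(\I{C},\Univ)\simeq\LFib(\I{C})$. The key input is a sectionwise reduction to the classical $\infty$-categorical statement that the forgetful functor $\DD_{1/}\to\DD$ is the universal left fibration of any presentable $\infty$-category $\DD$, combined with the compatibility of the $\BB$-categorical straightening of~\cite[\S~4.5]{Yoneda} with its $\infty$-categorical counterparts in each section (together with the corresponding compatibility for $\Univ$ itself, whose sections are the slices $\BB_{/A}$). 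Once this universal property is in place, chasing $\id_\Univ$ through the equivalence delivers the desired identification $(\pi_{1_{\Univ}})_!\simeq\widehat{\Univ}$ and hence the proposition.
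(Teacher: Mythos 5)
Since the paper only recalls this proposition from \cite[\S~4.6]{Yoneda} and contains no proof of it, there is no in-paper argument to compare against; so let me assess your proposal on its own terms. Your first reduction is sound (granting the extension of Proposition~\ref{prop:straightening} to large $\BB$-categories by a universe enlargement, which you need in order to apply the naturality at $\I{C}=\Univ$): it suffices to identify $(\pi_{1_{\Univ}})_!$ with the unstraightening of $\id_{\Univ}$. The fibre computation is also fine in substance: over an $A$-point $c$ of $\Univ$ the fibre of $(\pi_{1_{\Univ}})_!$ is $\map{\Univ}(1_{\Univ},c)$, and this can be identified with $c$ directly from $\Over{\BB}{A}\simeq\Fun^{\lim}((\Over{\BB}{A})^{\op},\SS)$ (be careful, though, not to quote the internal Yoneda lemma or \cite[Proposition~4.6.3]{Yoneda} here, since in that reference these results come together with or after the present proposition).

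The genuine gap is the step you yourself flag as the main obstacle, and the repair you propose does not work. The asserted classical input --- that for any presentable $\infty$-category $\DD$ the coslice projection $\Under{\DD}{1}\to\DD$ is ``the universal left fibration of $\DD$'' --- is false except when $\DD\simeq\SS$: that projection is the left fibration classified by the corepresented functor $\operatorname{Map}_{\DD}(1,-)\colon\DD\to\SS$, so pulling it back along $F\colon\I{C}\to\DD$ produces the left fibration classified by $\operatorname{Map}_{\DD}(1,F(-))$, not by $F$. More fundamentally, the proposed ``sectionwise reduction'' cannot be set up, because the internal straightening of Proposition~\ref{prop:straightening} is not computed sectionwise by classical straightening: a functor $F\colon\I{C}\to\Univ$ has sections $F(A)\colon\I{C}(A)\to\Over{\BB}{A}$, which are not functors to spaces, and the sections $\Under{\Univ}{1_{\Univ}}(A)\simeq(\Over{\BB}{A})_{1/}\to\Over{\BB}{A}$ are classically classified by the global sections functors $\Gamma_{\Over{\BB}{A}}$, which are not universal over $\Over{\BB}{A}$. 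So fibrewise agreement plus the claimed ``compatibility in each section'' does not upgrade to the identification you need. What is missing is an actual comparison over $\Univ$ --- for instance a map from $\Under{\Univ}{1_{\Univ}}$ to the unstraightening of $\id_{\Univ}$, after which a check on fibres over all $A$-points would legitimately suffice because equivalences of left fibrations are detected fibrewise, or a direct verification that $F\mapsto F^{\ast}(\pi_{1_{\Univ}})_!$ inverts the straightening. Producing that comparison is the real content of \cite[\S~4.6]{Yoneda} and is not supplied by your outline.
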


As a consequence of Proposition~\ref{prop:universalLeftFibration}, we may construct internal mapping functors as follows: recall that if $\CC$ is an $\infty$-category, there is an $\infty$-category $\Tw(\CC)$, called the \emph{twisted arrow $\infty$-category}, together with a left fibration $p_{\CC} \colon \Tw(\CC)\to\CC^{\op}\times\CC$. As a complete Segal space, the twisted arrow $\infty$-category can be defined via $\Tw(\CC)_n=\map{\CatS}((\Delta^{n})^\op\diamond\Delta^n,\CC)$, where $-\diamond -$ denotes the join of $\infty$-categories, and the left fibration $p_{\CC}$ is simply induced by precomposition with the two inclusions $(\Delta^n)^\op\into(\Delta^n)^{\op}\diamond\Delta^n\hookleftarrow\Delta^n$. As these constructions are evidently natural in $\CC$, we may thus define:
\begin{definition}
	For every $\BB$-category $\I{C}$, one defines the  \emph{twisted arrow} $\BB$-category $\Tw(\I{C})$ via $\Tw(\I{C})(A)=\Tw(\I{C}(A))$, and one defines a left fibration $p_{\I{C}}\colon \Tw(\I{C})\to\I{C}^{\op}\times\I{C}$ via $p_{\I{C}}(A)=p_{\I{C}(A)}$. We denote by $\map{\I{C}}(-,-)\colon\I{C}^\op\times\I{C}\to\Univ$ the functor which classifies the left fibration $p_{\I{C}}$.
\end{definition}
In other words, the internal mapping functor $\map{\I{C}}(-,-)$ is uniquely determined by the property that pulling back the universal left fibration along $\map{\I{C}}(-,-)$ recovers $p_{\I{C}}$. Note that by transposing $\map{\I{C}}(-,-)$, one obtains a functor $h_{\I{C}}\colon \I{C}\to\IPSh(\I{C})$. We refer to the latter as the \emph{Yoneda embedding}. This functor satisfies the following $\BB$-categorical version of Yoneda's lemma:
\begin{proposition}[{\cite[Theorem~4.7.8]{Yoneda}}]
	For every $\BB$-category $\I{C}$, the composition
	\begin{equation*}
		\I{C}^{\op}\times\IPSh(\I{C})\xrightarrow{h_{\I{C}}\times\id}\IPSh(\I{C})^\op\times\IPSh(\I{C})\xrightarrow{\map{\IPSh(\I{C})}(-,-)}\Univ
	\end{equation*}
	is equivalent to the evaluation map $\ev\colon \I{C}^{\op}\times\IPSh(\I{C})\to\Univ$. In particular, the Yoneda embedding $h_{\I{C}}$ is (section-wise) fully faithful.
\end{proposition}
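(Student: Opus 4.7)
By the straightening equivalence (Proposition~\ref{prop:straightening}) together with the description of the universal left fibration (Proposition~\ref{prop:universalLeftFibration}), the claim amounts to an equivalence of left fibrations over $\I{C}^\op\times\IPSh(\I{C})$. On the one hand, $\ev$ classifies the pullback $\I{E}$ of $(\pi_{1_{\Univ}})_!$ along $\ev$, whose fiber over a section $(c,F)$ is the $\BB$-groupoid $F(c)$. On the other hand, the composite on the left-hand side classifies the pullback $\I{F}$ of the twisted arrow left fibration $p_{\IPSh(\I{C})}$ along $h_{\I{C}}^\op\times\id$, whose fiber over $(c,F)$ is $\map{\IPSh(\I{C})}(h_{\I{C}}(c),F)$.

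A canonical comparison map $\I{F}\to\I{E}$ can be built as follows: given a morphism $\alpha\colon h_{\I{C}}(c)\to F$ in $\IPSh(\I{C})$, evaluate the component $\alpha_c\colon h_{\I{C}}(c)(c)\to F(c)$ at the distinguished element $\id_c\in\map{\I{C}}(c,c)=h_{\I{C}}(c)(c)$ to obtain an element of $F(c)$. Assembling this construction universally over $\I{C}^\op\times\IPSh(\I{C})$, for instance by expressing $\I{F}$ through the defining join $(\Delta^n)^\op\diamond\Delta^n$ of the twisted arrow and exploiting the naturality of $h_{\I{C}}$, produces the desired map of left fibrations.

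To verify the comparison is an equivalence, the plan is to reduce to the classical $\infty$-categorical Yoneda lemma by étale base change. Since all constructions in play commute with $\pi_A^\ast$ for $A\in\BB$ (Example~\ref{ex:MainList}(2)) --- in particular one has natural identifications $\pi_A^\ast\IPSh(\I{C})\simeq\IPSh[\Over{\BB}{A}](\pi_A^\ast\I{C})$ and analogous compatibilities for $\Tw$, $h_{\I{C}}$, and the universal left fibration --- the comparison map is itself compatible with étale base change. A map of $\BB$-categories is an equivalence if and only if it is sectionwise an equivalence, so checking the statement over each slice topos $\Over{\BB}{A}$ and appealing to the classical Yoneda lemma for $\infty$-categories should conclude the argument. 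The fully faithfulness of $h_{\I{C}}$ then follows by specialising the equivalence to $(c,h_{\I{C}}(c'))$, yielding $\map{\IPSh(\I{C})}(h_{\I{C}}(c),h_{\I{C}}(c'))\simeq h_{\I{C}}(c')(c)=\map{\I{C}}(c,c')$.

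The main obstacle I anticipate is executing the étale reduction correctly. The subtlety is that $\IPSh(\I{C})(A)$ is not the ordinary presheaf $\infty$-category $\PSh(\I{C}(A))$ but rather an $\Over{\BB}{A}$-internal construction, so a single evaluation at $A$ does not land us in the classical setting. One therefore needs to carefully combine the sectionwise characterisation of equivalences of $\BB$-categories with the naturality of all constructions under $\pi_A^\ast$, possibly iteratively, in order to validly deduce the internal Yoneda lemma from its $\infty$-categorical counterpart.
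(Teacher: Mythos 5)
The decisive step of your plan --- verifying that the comparison map is an equivalence ``by reducing to the classical $\infty$-categorical Yoneda lemma by \'etale base change'' --- does not go through, and you have in fact put your finger on exactly why in your last paragraph without resolving it. The identification $\pi_A^\ast\IPSh(\I{C})\simeq\IPSh[\Over{\BB}{A}](\pi_A^\ast\I{C})$ is correct, but it only translates the statement over $\BB$ into the \emph{same internal statement} over the slice $\Over{\BB}{A}$; it never lands you in the classical setting. Likewise, while a map of $\BB$-categories is indeed an equivalence if and only if it is so sectionwise, the section $\IPSh(\I{C})(A)$ is $\Fun_{\Over{\BB}{A}}(\pi_A^\ast\I{C}^\op,\Univ[\Over{\BB}{A}])$ and not $\PSh[{}](\I{C}(A))$, and the mapping groupoid $\map{\IPSh(\I{C})}(h_{\I{C}}(c),F)$ evaluated at $A$ is computed by a limit (an end internal to $\BB$) involving all stages of $\I{C}$, not just $\I{C}(A)$. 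So the sectionwise statement is not an instance of the classical Yoneda lemma, and iterating the base change (``possibly iteratively'') never terminates unless $\BB=\SS$. This is precisely why the result you are quoting is proved in the reference by a genuinely internal argument (via the twisted arrow fibration, covariant equivalences/initial maps internal to $\BB$, and the universal left fibration), rather than by pointwise reduction; the present paper does not reprove it but cites that argument.

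A secondary, smaller issue: the construction of the comparison map by ``evaluating at $\id_c$'' is only sketched. Making the distinguished point $\id_c$ and its naturality precise internally is nontrivial --- in the internal setting it amounts to producing a section of the pulled-back twisted arrow fibration, essentially the unit exhibiting $\map{\I{C}}(c,-)$ as corepresented --- and this is part of the actual content of the theorem rather than a routine assembly. So the overall architecture (straighten both sides, compare left fibrations over $\I{C}^\op\times\IPSh(\I{C})$) is reasonable and matches the spirit of the cited proof, but as written the argument has a genuine gap at its main verification step.
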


\subsection{Adjunctions and colimits}
It follows from Example~\ref{ex:MainList} that the $ \infty $-category $ \Cat(\BB) $ is enriched in $ \CatS $. Thus we may make use of the $2$-categorical notion of adjunctions to define adjoint functors between $\BB$-categories. Alternatively, one could also make use of internal mapping functors to make sense of a notion of adjunctions of $\BB$-categories. These two approaches turn out to be equivalent:
\begin{proposition}[{\cite[Proposition~3.3.4]{Colimits}}]
	For a pair $l\colon \I{C}\to\I{D}$ and $r\colon \I{D}\to\I{C}$ of functors between $\BB$-categories, the following are equivalent:
	\begin{enumerate}
	\item $l$ is left adjoint to $r$ in the $2$-categorical sense;
	\item there is an equivalence $\map{\I{D}}(l(-),-)\simeq\map{\I{C}}(-,r(-))$ of functors $\I{D}^{\op}\times\I{C}\to\Univ$.
	\end{enumerate}
\end{proposition}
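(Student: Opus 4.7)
The plan is to adapt the classical $\infty$-categorical argument relating adjunctions and hom-equivalences, using the internal mapping functor $\map{\I{C}}(-,-)$ and the $\BB$-categorical Yoneda lemma as the internal replacements of the usual tools.

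For the direction (1) $\Rightarrow$ (2), I would start from a $2$-categorical adjunction $l\dashv r$ in $\Cat(\BB)$, with unit $\eta\colon \id_{\I{C}}\to rl$ and counit $\epsilon\colon lr\to\id_{\I{D}}$. The twisted arrow construction is functorial in $\BB$-categories (as it is defined section-wise from the join of $\infty$-categories), so $l$ and $r$ induce canonical natural transformations $\map{\I{C}}(-,-)\to\map{\I{D}}(l(-),l(-))$ and $\map{\I{D}}(-,-)\to\map{\I{C}}(r(-),r(-))$ of $\BB$-functors to $\Univ$. Whiskering with $\eta$ and $\epsilon$ yields morphisms
\[
\beta\colon \map{\I{D}}(l(-),-)\xrightarrow{r}\map{\I{C}}(rl(-),r(-))\xrightarrow{\eta^\ast}\map{\I{C}}(-,r(-))
\]
and
\[
\alpha\colon\map{\I{C}}(-,r(-))\xrightarrow{l}\map{\I{D}}(l(-),lr(-))\xrightarrow{\epsilon_\ast}\map{\I{D}}(l(-),-).
\]
I would then verify that the triangle identities for $(\eta,\epsilon)$ in the $(\infty,2)$-category $\Cat(\BB)$ translate exactly to the statement that $\alpha$ and $\beta$ are mutually inverse.

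For the direction (2) $\Rightarrow$ (1), I would recover the unit and counit from a given equivalence $\phi\colon \map{\I{D}}(l(-),-)\simeq\map{\I{C}}(-,r(-))$. Specialising the first component of $\phi$ along the functor $l\colon \I{C}\to\I{D}$ yields an equivalence $\map{\I{D}}(l(-),l(-))\simeq\map{\I{C}}(-,rl(-))$ of $\BB$-functors $\I{C}^{\op}\times\I{C}\to\Univ$; precomposing with the "identity section" $\I{C}^\core\to\map{\I{D}}(l(-),l(-))$ (which exists by the internal Yoneda lemma, since it classifies the identity of $l$ under $h_{\I{D}}$) gives a section of $\map{\I{C}}(-,rl(-))$ over $\I{C}^\core$. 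By the $\BB$-categorical Yoneda lemma, this is equivalent to a $\BB$-natural transformation $\eta\colon \id_{\I{C}}\to rl$. Dually, $\phi^{-1}$ produces the counit $\epsilon\colon lr\to\id_{\I{D}}$. The triangle identities then follow from the naturality of $\phi$ combined with fully faithfulness of the Yoneda embedding, which detects equality of maps between $\BB$-functors through their action on internal mapping objects.

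The main obstacle is phrasing "the identity section" and "pairing with the Yoneda embedding" precisely in the internal setting: since $\map{\I{C}}(-,-)$ takes values in $\Univ$ (i.e.\ section-wise in $\BB_{/A}$) rather than in $\SS$, the usual sleight of hand "apply the equivalence to $\id_{l(c)}$" needs to be interpreted as a manipulation of left fibrations classified via Proposition~\ref{prop:universalLeftFibration}. Once this is done, however, everything reduces cleanly to the universal property of $h_{\I{C}}$ and the classification of left fibrations by $\Univ$. Alternatively, one may evaluate section-wise and use that an adjunction in $\Cat(\BB)$ amounts to a family of $\infty$-categorical adjunctions $l(A)\dashv r(A)$ natural in $A\in\BB$, together with the observation that an equivalence of $\Univ$-valued $\BB$-functors restricts over the final object $1_{\BB/A}\in\BB_{/A}$ to an equivalence of ordinary $\infty$-categorical mapping functors, thereby reducing to the classical statement pointwise and checking $\BB$-naturality at the end.
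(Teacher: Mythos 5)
The paper contains no proof for you to be measured against here: this proposition is imported verbatim from \cite[Proposition~3.3.4]{Colimits}, so your argument has to stand on its own. Its overall architecture is the right one and matches how such statements are proved in the internal setting: for (1)$\Rightarrow$(2) you build $\alpha$ and $\beta$ from the (co)unit using the functoriality of $\Tw$ (which indeed produces the transformations $\map{\I{C}}(-,-)\to\map{\I{D}}(l(-),l(-))$ and the whiskering maps), and since an arrow between functors valued in $\Univ$ is an equivalence precisely if it is one in the homotopy $2$-category, the triangle identities, which hold there by definition of a $2$-categorical adjunction, do suffice to show $\alpha$ and $\beta$ are mutually inverse. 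That half of your plan goes through, modulo routine bookkeeping about the compatibility of the $\Tw$-construction with whiskering.

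The gap is in (2)$\Rightarrow$(1), at the point where you extract the unit. A section of the left fibration classified by $\map{\I{C}}(-,rl(-))$ over the core $\I{C}^\core$ records, locally, a map $c\to rl(c)$ for each object but carries no naturality data, and the $\BB$-categorical Yoneda lemma does not upgrade such core-level data to a transformation $\id_{\I{C}}\to rl$; this already fails for ordinary $\infty$-categories, where a choice of maps indexed by the core does not determine a natural transformation. The correct ``identity section'' is the twisted-arrow fibration itself: the functoriality of $\Tw$ gives a map of functors $\map{\I{C}}(-,-)\to\map{\I{D}}(l(-),l(-))$ on $\I{C}^\op\times\I{C}$, and composing with the restriction of $\phi$ along $l$ yields a map $\map{\I{C}}(-,-)\to\map{\I{C}}(-,rl(-))$; transposing and using that postcomposition with the fully faithful Yoneda embedding $h_{\I{C}}$ is again fully faithful on functor $\BB$-categories, this datum is precisely a transformation $\eta\colon\id_{\I{C}}\to rl$ (and dually for $\epsilon$). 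With $\eta$ produced this way one can either check the triangle identities as you propose, or more efficiently verify via Yoneda that the transformation induced by $\eta$ agrees with $\phi$, hence is an equivalence, and conclude by a unit criterion. Your fallback ``section-wise'' route also needs more than stated: a family of adjunctions $l(A)\dashv r(A)$ assembles to an internal adjunction only when the mate squares of Proposition~\ref{prop:characterisationAdjunctions} are invertible, and extracting exactly that compatibility from the internal equivalence (2) is the substance of the proof, not an afterthought.
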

This notion of adjunctions can be characterised in the sheaf-theoretic language as follows:
\begin{proposition}[{\cite[Proposition~3.2.9]{Colimits}}]
	\label{prop:characterisationAdjunctions}
	A functor of $ \BB $-categories $ f \colon \I{C} \to \I{D} $ admits a left adjoint if and only if (1) for every $A\in\BB$ the functor $ f(A) $ admits a left adjoint $ g_A $,  and (2) for every map $ s \colon B \to A $ in $\BB$ the horizontal mate of the commutative square
	\[\begin{tikzcd}
		{\I{C}(A)} & {\I{D}(A)} \\
		{\I{C}(B)} & {\I{D}(B)}
		\arrow["{s^*}", from=1-2, to=2-2]
		\arrow["{s^*}"', from=1-1, to=2-1]
		\arrow["{f(B)}", from=2-1, to=2-2]
		\arrow["{f(A)}", from=1-1, to=1-2]
	\end{tikzcd}\]
	is invertible. The dual statement of right adjoints holds as well.
\end{proposition}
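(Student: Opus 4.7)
My plan is to treat $\Cat(\BB) = \Fun^{\lim}(\BB^\op, \CatS)$ as an $(\infty,2)$-category, with $2$-cells inherited pointwise from $\CatS$, and deduce the statement as an instance of a ``relative adjoint functor theorem'': a morphism of sheaves of $\infty$-categories over $\BB^\op$ admits an adjoint if and only if it does pointwise with compatible Beck–Chevalley mates.

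For the forward direction, suppose $f$ admits a left adjoint $g \colon \I{D}\to\I{C}$ with unit $\eta$ and counit $\varepsilon$. Evaluation $\ev_A \colon \Cat(\BB)\to\CatS$ at an object $A\in\BB$ is a $2$-functor, so it carries the adjunction to an adjunction $g(A) \dashv f(A)$ in $\CatS$, giving condition~(1). For condition~(2), the fact that $g$ is a natural transformation of sheaves supplies, for each $s \colon B\to A$, an equivalence $g(B)\circ s^* \simeq s^*\circ g(A)$. A direct mate calculation using the triangle identities shows that the horizontal mate of the displayed commutative square for $f$ agrees with the identity on $g(B)\circ s^*$ under this equivalence, hence is invertible.

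For the converse, given (1) and (2), the goal is to promote the pointwise left adjoints $g_A$ to a functor $g \colon \I{D}\to\I{C}$ of $\BB$-categories that is left adjoint to $f$. By mate calculus, condition~(2) supplies, for every $s \colon B\to A$, a canonical equivalence $g_B \circ s^* \simeq s^* \circ g_A$: these are precisely the naturality cells needed for $g$. Using the internal characterisation of adjunctions from the proposition just cited above (from~\cite{Colimits}), it suffices to construct an equivalence $\map{\I{D}}(g(-),-) \simeq \map{\I{C}}(-, f(-))$ of functors $\I{D}^\op\times\I{C}\to\Univ$. I would build this sectionwise from the pointwise adjunctions $g_A \dashv f(A)$, where at each $A\in\BB$ the equivalence is the standard one induced by $\eta_A$ and $\varepsilon_A$; condition~(2) is exactly what makes these pointwise equivalences commute with the restriction maps $s^*$ associated to $s \colon B\to A$, so they assemble into a single natural equivalence of functors of $\BB$-categories.

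The main obstacle is the coherence problem: producing an actual equivalence in $\Fun_\BB(\I{D}^\op\times\I{C}, \Univ)$ from sectionwise equivalences together with compatibility data for all restriction maps requires controlling an infinite tower of higher coherences. The cleanest way to handle this is by unstraightening: $\I{C}$ and $\I{D}$ correspond to cartesian fibrations over $\BB^\op$, $f$ corresponds to a morphism of such fibrations, and a left adjoint in $\Cat(\BB)$ is exactly a left adjoint relative to $\BB^\op$. The relative adjoint functor theorem in the $(\infty,2)$-category of cartesian fibrations reduces the existence of such a relative left adjoint to (1) the existence of fiberwise left adjoints together with (2) their compatibility with cartesian transport—which is precisely the Beck–Chevalley condition in the statement. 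The coherent unit and counit are obtained by assembling the pointwise ones using the mate equivalences as structural data.
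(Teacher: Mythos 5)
Since the paper does not prove this proposition at all --- it is recalled verbatim from the companion paper [Colimits, Proposition~3.2.9] --- there is no in-paper argument to measure you against, so I will assess your plan on its own terms. Your overall route is the standard (and very likely the intended) one: observe that $\Cat(\BB)\subset\Fun(\BB^\op,\CatS)$ is full, so it suffices to detect the adjunction in the functor $(\infty,2)$-category; get the easy direction by applying the $2$-functor $\ev_A$ and a mate computation; and handle the coherence problem in the converse by unstraightening to (co)cartesian fibrations over the base. The forward direction as you describe it is fine.

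The gap is in the converse, where the load-bearing sentence ``a left adjoint in $\Cat(\BB)$ is exactly a left adjoint relative to $\BB^\op$'' is not correct as stated, and once corrected it is precisely where the remaining work lies. Lurie's criterion for relative adjunctions (HA, \S~7.3.2) produces a left adjoint $G$ relative to the base from the fibrewise left adjoints \emph{alone}, with no Beck--Chevalley hypothesis; such a relative adjoint corresponds after straightening only to a \emph{lax} natural family of left adjoints, not to a morphism of $\BB$-categories. What condition (2) buys is exactly that $G$ preserves the (co)cartesian edges, hence straightens to an honest $g\colon\I{D}\to\I{C}$ in $\Cat(\BB)$; and even granting that, you must still argue that the relative unit and counit, which a priori are just $2$-cells over the base, straighten to $2$-cells in $\Fun(\BB^\op,\CatS)$ satisfying the triangle identities, so that $g\dashv f$ in the enriched/$2$-categorical sense the paper uses. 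Neither of these two steps is delivered by the ``relative adjoint functor theorem'' you invoke: the equivalence between fibrewise-adjoint-plus-invertible-mates data and cartesian-edge-preserving relative adjoints, and the compatibility of straightening with adjunctions (at the level of $2$-cells), each needs an explicit argument or a precise citation. As written, the coherence problem you rightly flag in your first attempt has been relocated into that unproved dictionary rather than solved; with those two statements supplied (they are available in the literature on lax transformations and relative adjunctions), your outline does complete to a proof.
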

Using the notion of adjunctions of $\BB$-categories, we can now define internal limits and colimits as follows:
\begin{definition}[{\cite[\S~4.1 and~4.2]{Colimits}}]
	Let $ \I{I} $ and $ \I{C} $ be $ \BB $-categories.
	We will say that $ \I{C} $ has $ \I{I} $-indexed colimits if the functor
	\[
		\diag \colon \I{C} \to \iFun(\I{I},\I{C})
	\]
	induced by precomposition with the unique map $ \I{I} \to 1_{\BB}$ admits a left adjoint, in which case we denote this left adjoint by $ \colim_{\I{I}} $.
	Similarly, a functor $ \I{C} \to \I{D} $ between $ \BB $-categories with $ \I{I} $-indexed colimits is said to \emph{preserve $ \I{I} $-indexed colimits}, if the canonical lax square
	\[\begin{tikzcd}
		{\iFun(\I{I},\I{C})} & {\I{C}} \\
		{\iFun(\I{I},\I{D})} & {\I{D}}
		\arrow["{\colim_{\I{I}}}", from=1-1, to=1-2]
		\arrow["f", from=1-2, to=2-2]
		\arrow["{f_*}"', from=1-1, to=2-1]
		\arrow["{\colim_{\I{I}}}"', from=2-1, to=2-2]
		\arrow[Rightarrow, from=2-1, to=1-2, shorten=4mm]
	\end{tikzcd}\]
	commutes. One defines $\I{I}$-indexed limits and the property of a functor to preserve these in the evident dual way.
\end{definition}

\begin{example}[{\cite[Examples~4.1.13,~4.1.14,~4.2.6 and~4.2.7]{Colimits}}]
	\label{ex:TwoKindsOfBColimits}
	There are two important special instances of the above definition:
	\begin{enumerate}
		\item If $ \I{I} = \const_{\BB} (\II) $ for an $ \infty $-category $ \II $ we have an equivalence $ \iFun(\I{I},\I{C})(A) \simeq \Fun(\II,\I{C}(A)) $ functorially in $A\in\BB$.
		Using Proposition~\ref{prop:characterisationAdjunctions} it follows that $ \I{C} $ has $ \I{I}$-indexed colimits if and only if (1) the $ \infty $-category $ \I{C}(A) $ has $ \II $-indexed colimits for every $ A \in \BB $ and (2) for every map $s\colon B\to A$ in $\BB$ the transition functor $ s^* \colon \I{C}(A) \to \I{C}(B) $ preserves $ \II $-indexed colimits. In particular, the colimit of a diagram $\I{I}\to \I{C}$ is simply the colimit of the transposed diagram $\II\to \Gamma_{\BB}(\I{I})$ in the $\infty$-categorical sense. Similarly, a functor $f\colon \I{C}\to\I{D}$ preserves $\I{I}$-indexed colimits if and only if $f(A)$ preserves $\II$-indexed colimits for all $A\in\BB$.
		\item If $ \I{I} = G$ for some $ G \in \BB $ we have an equivalence $ [\I{I},\I{C}])(A) \simeq \I{C}(G \times A) $ functorially in $A\in\BB$.
		Proposition~\ref{prop:characterisationAdjunctions}  thus implies that that $ \I{C} $ has $\I{I}$-indexed colimits if and only if for every $A\in\BB$ the transition functor $ \pi_G^* \colon \I{C}(A) \to \I{C}(G \times A) $ has a left adjoint $ (\pi_G)_!$ that commutes with the transition functors $ s^* \colon \I{C}(A) \to \I{C}(B) $ for every map $ s \colon B \to A $ in $ \BB $. Similarly, a functor $f\colon \I{C}\to\I{D}$ preserves $\I{I}$-indexed colimits if for every $A\in\BB$ the natural map $(\pi_G)_!f(G\times A)\to f(A)(\pi_G)_!$ is an equivalence.
	\end{enumerate}
\end{example}

\subsection{Cocompleteness and cocontinuity}
In higher category one often considers $\infty$-categories that have colimits indexed by all $\infty$-categories $\II$ that are contained in some full subcategory $ \UU \subset \CatS$.
Such $\infty$-categories are typically called $\UU$-cocomplete.
For example, if $ \UU = \CatS $ such $ \infty $-categories will simply be called cocomplete.
One might therefore expect that it is reasonable to call a $ \BB $-category $ \I{C} $ cocomplete if it has colimits indexed by any $ \BB $-category $ \I{I} $.
However, it turns out that this notion is too weak and a stronger notion of cocompleteness has to be imposed, since otherwise expected properties, such as the adjoint functor theorem, will not hold.
The main difference is that one also has to take diagrams into account that are only \emph{locally} defined, i.e.\ not on all of $\BB$ but only on a slice $ \BB_{/A} $. More precisely, instead of just considering diagrams $\I{I}\to\I{C}$ where $\I{I}$ is a $\BB$-category, one also has to take into account diagrams of the form $\I{I}\to\pi_A^\ast\I{C}$ where $\I{I}$ is a $\Over{\BB}{A}$-category, for arbitrary $A\in\BB$.
This leads to the following definition:

\begin{definition}[{\cite[\S~5.2]{Colimits}}]
	Let $ \I{U} $ be an \emph{internal class of $\BB$-categories}, by which we mean a full subsheaf $\I{U}\subset \ICat_{\BB}$.
	We call a $ \BB $-category \emph{$ \I{U} $-cocomplete} if for every $ A \in \BB $ the $ \BB_{/A} $-category $ \pi_A^* (\I{C}) $ admits all colimits indexed by any $ \I{I} \in \I{U}(A) $.
	Similiarly a functor $ f \colon \I{C} \to \I{D} $ is called \emph{$ \I{U} $-cocomplete} if for every $ A \in \BB$, the functor $ \pi_A^* f $ preserves $ \I{I} $-indexed colimits for any $ \I{I} \in \I{U}(A) $.
	We will call $ \I{C} $ \emph{cocomplete} if it is $ \ICat_\BB $-cocomplete, and $f$ \emph{cocontinuous} if it is $\ICat_{\BB}$-cocontinuous.
\end{definition}

As usual we have dual notions of completeness and continuity.

\begin{example}
    \label{ex:twoExOfInternalClasses}
        We will later need the following internal classes:
	\begin{enumerate}
		\item The internal class of \emph{finite} $\BB$-categories $ \IFin_\BB \subset \ICat_\BB$ is the subsheaf spanned by the locally constant sheaves of finite $ \infty $-categories, i.e. by those $\Over{\BB}{A}$-categories $\I{I}$ (where $A\in\BB$ is arbitrary) for which there is a cover $(s_i)\colon\bigsqcup_i A_i\onto A$ in $\BB$ and an equivalence $s_i^\ast(\I{I})\simeq \const_{\Over{\BB}{A_i}}(\II_i)$ of $\Over{\BB}{A_i}$-categories for each $i$, where $\II_i$ is a \emph{finite} $\infty$-category (cf.~\cite[\S~2.2.3]{PresTop}).
		With this definition, a $ \BB $-category $ \I{C} $ is $ \IFin_\BB $-complete if and only if it takes values in the $\infty$-category $\CatS^{\lex}$ of left exact $\infty$-categories and left exact functors~\cite[Proposition~2.2.3.5]{PresTop}.
		We say that such $\BB$-categories have \emph{finite limits } (or are \emph{left exact}).
		Similiarly a functor $ f \colon \I{C} \to \I{D} $ is $ \Fin_\BB $-continuous if and only if $ f(A) $ preserves finite limits for every $ A \in \BB $~\cite[Proposition~2.2.3.5]{PresTop}.
		We say that such functors \emph{preserve finite limits} (or are \emph{left exact}).
		\item For any $ A \in \BB $ let $ \IFilt(A) \subset \Cat(\BB_{/A}) $ be the full subcategory spanned by those $\Over{\BB}{A}$-categories $ \I{I} $ for which the functor of $\Over{\BB}{A}$-categories $ \colim_{\I{I}} \colon [\I{I},\Univ[{\BB_{/A}}]] \to \Univ[\BB_{/A}] $ is left exact.
		Then $ \IFilt $ defines an internal class \cite[Remark 2.1.1.2]{PresTop}.
		We call the objects in $ \IFilt $ \emph{filtered} $ \BB $-categories.
		We will say that a functor \emph{preserves filtered colimits} if it is $ \IFilt $-cocontinuous.
	\end{enumerate}
\end{example}

For a small $ \BB $-category $ \I{C} $, we write $ \IPSh(\I{C}) = \iFun(\I{C}^\op,\Univ[\BB]) $ for the $\BB$-category of \emph{presheaves} on $\I{C}$.
As for usual $ \infty $-categories, we have that $ \IPSh(\I{C}) $ is the \emph{free cocompletion} of $ \I{C} $ in the following sense:

\begin{theorem}[{\cite[Theorem 7.1.1]{Colimits}}]\label{thm:FreeCocompletions}
	Let $\I{E} $ be a cocomplete $ \BB $-category and $ \I{C} $ a small $ \BB $-category.
	Then precomposition with the Yoneda embedding $h_{\I{C}}\colon \I{C} \into \IPSh(\I{C})$ induces an equivalence
	\[
		\Fun_\BB^{\cc}(\IPSh(\I{C}),\Univ[\BB]) \xrightarrow{\simeq }\Fun_\BB(\I{C},\I{E}).
	\]
	Here $ \Fun_\BB^{\cc}(\I{C},\Univ[\BB])\subset \Fun_{\BB}(\I{C},\Univ[\BB])$ denotes the full subcategory spanned by the cocontinuous functors.
\end{theorem}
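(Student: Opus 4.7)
The plan is to prove the universal property of $\IPSh(\I{C})$ as a free cocompletion via the standard strategy, implemented internally. The argument rests on two pillars: a density (co-Yoneda) theorem for $\BB$-categorical presheaves, and the existence of left Kan extensions along the Yoneda embedding $h_{\I{C}}$.

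First, I would establish the density theorem: for every $F \in \IPSh(\I{C})$, the canonical map
\[\colim_{\Over{\I{C}}{F}} (h_{\I{C}} \circ \pi) \xrightarrow{\simeq} F\]
is an equivalence of presheaves, where $\pi\colon \Over{\I{C}}{F} \to \I{C}$ is the right fibration obtained by pulling back (the opposite of) the universal left fibration from Proposition \ref{prop:universalLeftFibration} along (the transpose of) $F$. This is a formal consequence of the $\BB$-categorical Yoneda lemma applied pointwise in $\IPSh(\I{C})$ and the explicit description of internal mapping spaces via the twisted arrow construction.

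Next, given $f\colon\I{C}\to\I{E}$, I would construct a left Kan extension $f_!\colon \IPSh(\I{C})\to\I{E}$ by the pointwise formula $f_!(F) = \colim_{\Over{\I{C}}{F}} (f\circ\pi)$, which makes sense because $\I{E}$ is cocomplete. Since the construction $F\mapsto \Over{\I{C}}{F}$ is functorial in $F$ (it is obtained by pullback) and compatible with étale base change, this indeed defines a morphism of $\BB$-categories. Applying the formula to $F=h_{\I{C}}(c)$ and using that $\Over{\I{C}}{h_{\I{C}}(c)}$ admits a terminal object witnessed by $\id_c$ (a standard consequence of Yoneda's lemma), we get $f_! \circ h_{\I{C}}\simeq f$. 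Conversely, if $g\colon \IPSh(\I{C})\to\I{E}$ is cocontinuous, applying $g$ to the density presentation yields
\[g(F)\simeq g\bigl(\colim_{\Over{\I{C}}{F}} h_{\I{C}}\circ \pi\bigr) \simeq \colim_{\Over{\I{C}}{F}} (g\circ h_{\I{C}}\circ\pi) = (g\circ h_{\I{C}})_!(F),\]
showing that $g\mapsto g\circ h_{\I{C}}$ and $f\mapsto f_!$ are mutually inverse on objects. Upgrading this to an equivalence of $\infty$-categories requires a parallel analysis of mapping spaces, which reduces to the same density argument applied in $\iFun(\IPSh(\I{C}),\I{E})$.

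The main obstacle is that, unlike in the ordinary $\infty$-categorical setting, one cannot simply argue sectionwise: the slice $\Over{\I{C}}{F}$ is genuinely a $\BB$-category depending on $F$, and the cocompleteness assumption must be used in its full strength, including its behaviour under étale base change along all $A\in\BB$. In particular, one must verify that the pointwise Kan extension formula assembles into an honest functor of $\BB$-categories and that $f_!$ is cocontinuous, both of which require the interchange of internal colimits. This is where Proposition \ref{prop:characterisationAdjunctions} together with the Fubini-type results for internal colimits from \cite{Colimits} carry the weight; once these are in place, verifying cocontinuity of $f_!$ reduces to resolving an arbitrary diagram in $\IPSh(\I{C})$ via the density theorem and commuting the two colimits.
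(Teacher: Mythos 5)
This theorem is not proved in the paper at all --- it is recalled verbatim from \cite[Theorem~7.1.1]{Colimits} --- so the comparison is with that reference, whose proof proceeds exactly by your route: density of the Yoneda embedding plus internal left Kan extension along $h_{\I{C}}$ with the pointwise colimit formula over $\Over{\I{C}}{F}$. Your proposal is correct and essentially the same argument, with the genuinely technical points (assembling the pointwise formula into a functor of $\BB$-categories, cocontinuity of $f_!$ via interchange of internal colimits, and upgrading the object-level inverse to an equivalence of $\infty$-categories, most cleanly done via the adjunction $h_!\dashv h^*$ rather than a separate analysis of mapping spaces) correctly identified and delegated to the internal Kan-extension machinery of \cite{Colimits}.
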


\subsection{Presentable $ \BB $-categories and $ \BB $-topoi}
Lastly, we turn to the $\BB$-categorical analogues of presentable $\infty$-categories and $\infty$-topoi:
\begin{definition}[{\cite[\S~2.4.2]{PresTop}}]
	A $ \BB $-category $ \I{C} $ is presentable if it is cocomplete and for any $ A \in \BB $ the $ \infty $-category $ \I{C}(A) $ is presentable. We denote by $\LPr(\BB)\into\Cat(\BB)$ the (non-full) subcategory that is spanned by the presentable $\BB$-categories and the cocontinuous functors between them.
\end{definition}

\begin{example}
	The $ \infty $-category $ \Univ[\BB] $ is presentable.
	More generally, for any small $\BB$-category $\I{C}$ the associated presheaf $\BB$-category $\IPSh(\I{C})$ is presentable.
\end{example}

\begin{remark}
	There is also a more intrinsic characterisation of $ \BB $-categories à la Lurie, Simpson \cite[Theorem 2.4.2.5]{PresTop}.
	Furthermore, presentable $ \BB $-categories have many of the expected properties. For example, the usual adjoint functor theorems hold \cite[Proposition 2.4.3.1 \& Proposition 2.4.3.3]{PresTop}.
\end{remark}

\begin{definition}[{\cite[Theorem~3.2.3.1]{PresTop}}]
	A $ \BB $-category $ \I{X} $ is called a \emph{$ \BB $-topos} if there is a small $ \BB $-category $ \I{C} $ and an adjunction
	\[
	\begin{tikzcd}[column sep={6em,between origins}]
	\IPSh(\I{C}) \arrow[r, bend left=30, shift left=4pt, "L"{name=U}, start anchor=east, end anchor=west] \arrow[from=r, bend left=30, shift left=4pt, "i"{name=D}, start anchor=west, end anchor=east]\arrow[from=U, to=D, phantom, "\bot"]& \I{X}
	\end{tikzcd}
	\]
	where $ L $ is left exact and $ i $ is (section-wise) accessible and fully faithful. An \emph{algebraic morphism} of $\BB$-topoi is a left exact left adjoint functor.
	A \emph{geometric morphism} of $ \BB $-topoi is a right adjoint functor whose left adjoint is an algebraic morphism.
	We denote the subcategory of $ \Cat(\BB) $ spanned by the $ \BB $-topoi and algebraic morphisms by $ \LTop(\BB) $ and the subcategory spanned by the $\BB$-topoi and geometric morphisms by $\RTop(\BB)$.
\end{definition}

\begin{remark}
	One can also characterise $ \BB $-topoi as presentable $ \BB $-categories that satisfy \emph{descent} (see \cite[Defintion 3.1.1.4]{PresTop}).
	Furthermore there is also a characterisation in terms of an analogoue of Giraud's axioms \cite[Theorem 3.2.1.5]{PresTop}.
\end{remark}

It is not hard to see that the $ \infty $-category $ \PSh[\BB](\I{C}) $ is an $ \infty $-topos and therefore also $ \Gamma_{\BB} \I{X} $ is an $ \infty $-topos for any $ \BB $-topos $ \I{X} $. Moreover, if $f_\ast\colon \I{X}\to\I{Y}$ is a geometric morphism of $\BB$-topoi, then $\Gamma_{\BB}(f)$ is a geometric morphism of $\infty$-topoi. Thus, the functor $\Gamma_{\BB}$ restricts to a map $\RTop(\BB)\to\RTopS$. Now it is a consequence of Theorem~\ref{thm:FreeCocompletions} that the universe $ \Univ[\BB] $ is the final object of $\RTop(\BB)$~\cite[Corollary~3.2.2.4]{PresTop}.
Hence one obtains an induced functor $ \Gamma_{\BB} \colon \RTop(\BB) \to (\RTopS)_{/ \BB} $.

\begin{theorem}[{\cite[Theorem~3.2.5.1]{PresTop}}]\label{thm:TopoiAreRelativeTopoi}
	The functor $ \Gamma_\BB \colon \RTop(\BB) \to \Over{(\RTopS)}{\BB} $ is an equivalence.
\end{theorem}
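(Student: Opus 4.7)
The plan is to construct an explicit inverse to $\Gamma_\BB$. Given a geometric morphism $f_\ast\colon\XX\to\BB$, I would assemble the functor $f_\ast\Univ[\XX]\colon\BB^\op\to\CatSS$ defined by the assignment $A\mapsto\Over{\XX}{f^\ast(A)}$, with transition maps induced by pullback along images of morphisms in $\BB$ under $f^\ast$. The first step is to check that $f_\ast\Univ[\XX]$ is a $\BB$-topos. That it is a sheaf follows from descent in the $\infty$-topos $\XX$ combined with the fact that $f^\ast$ preserves colimits; that each value $\Over{\XX}{f^\ast(A)}$ is an $\infty$-topos is standard, since \'etale slices of $\infty$-topoi are $\infty$-topoi; and that the resulting sheaf is actually a $\BB$-topos can be seen by choosing a presentation of $\XX$ as an accessible left exact localisation of a presheaf $\infty$-topos and transporting it into a $\BB$-categorical presentation. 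Functoriality over $\BB$ then produces a candidate inverse functor $\Over{(\RTopS)}{\BB}\to\RTop(\BB)$.

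One of the two natural equivalences is then immediate: applying $\Gamma_\BB$ to $f_\ast\Univ[\XX]$ returns $f_\ast\Univ[\XX](1_\BB)=\Over{\XX}{f^\ast(1_\BB)}\simeq\XX$, and the associated geometric morphism to $\BB$ is readily identified with $f_\ast$ itself by unwinding the universal property of $\Univ[\BB]$ as the terminal object of $\RTop(\BB)$.

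The main obstacle is the other composite: for an arbitrary $\BB$-topos $\I{X}$ with induced geometric morphism $p_\ast\colon\Gamma_\BB\I{X}\to\BB$, one must produce a natural equivalence $\I{X}(A)\simeq\Over{\Gamma_\BB\I{X}}{p^\ast(A)}$ in $A\in\BB$. My approach is to first handle the presheaf case $\I{X}=\IPSh(\I{C})$: here both sides unfold via Example~\ref{ex:MainList}~(2) to a presheaf $\infty$-category on the \'etale base change $\pi_A^\ast\I{C}$, and the comparison becomes essentially formal from the adjunction between $\pi_A^\ast$ and $(\pi_A)_!$. For the general case, one invokes the fact that every $\BB$-topos is an accessible left exact localisation of a presheaf $\BB$-topos and that, correspondingly, $\Gamma_\BB\I{X}$ is a localisation of $\PSh[\BB](\I{C})$; the equivalence is then obtained by showing that \'etale slicing $\Over{-}{p^\ast(A)}$ is compatible with these localisations on both sides. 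The technical heart of the argument is this last compatibility, which requires a careful matching of the two presentations and ultimately rests on the Giraud-type characterisation of $\BB$-topoi.
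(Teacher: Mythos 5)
The paper itself does not prove this statement: it is recalled from \cite[Theorem~3.2.5.1]{PresTop}, and the only in-paper content is the description of the inverse functor, $f_\ast\mapsto f_\ast\Univ[\XX]$, given explicitly by $A\mapsto\Over{\XX}{f^\ast(A)}$ --- which your plan reproduces correctly, as it does the easy facts (sheafyness via descent in $\XX$ and cocontinuity of $f^\ast$, and the identification $\Gamma_{\BB}(f_\ast\Univ[\XX])\simeq\XX$). So your outline follows the expected shape, but the two steps you compress into a sentence each are precisely where the theorem lives, and as written they contain genuine gaps.

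The main gap is the claim that $f_\ast\Univ[\XX]$ is a $\BB$-topos ``by choosing a presentation of $\XX$ as an accessible left exact localisation of a presheaf $\infty$-topos and transporting it into a $\BB$-categorical presentation.'' This transport is the essential content of the theorem (the essential surjectivity of $\Gamma_\BB$), not a routine verification: the definition of a $\BB$-topos demands a small $\BB$-category $\I{C}$ and an \emph{internal} left exact, section-wise accessible localisation $\IPSh(\I{C})\rightleftarrows f_\ast\Univ[\XX]$, and an external site for $\XX$ does not obviously produce one. If you try the evident transport, taking $\I{C}=\const_{\BB}(\CC)$ for an external site $\CC$ of $\XX$, the induced cocontinuous functor $\IPSh(\const_{\BB}(\CC))\to f_\ast\Univ[\XX]$ is not visibly a section-wise localisation: in the slice $\Over{\XX}{f^\ast A}$, objects $c\to f^\ast A$ with $c$ coming from $\CC$ are sections of objects in the image (hence finite limits of them), not colimits, so even generation under colimits is unclear; in general the internal site cannot be taken constant. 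The argument in \cite{PresTop} instead runs through the internal theory of accessibility and presentability and the descent/Giraud-type characterisation of $\BB$-topoi, which is exactly the machinery your ``transport'' elides. A secondary issue: for the composite on $\RTop(\BB)$, an equivalence $\I{X}(A)\simeq\Over{\Gamma_\BB\I{X}}{p^\ast(A)}$ natural in $A$ for each fixed $\I{X}$ (your presheaf-then-localise argument, whose localisation-matching step is itself nontrivial) is not yet a natural equivalence of endofunctors of $\RTop(\BB)$; in the $\infty$-categorical setting you must either supply this coherence or, more realistically, restructure the proof to establish that $\Gamma_\BB$ is fully faithful and essentially surjective rather than exhibiting unit and counit by hand.
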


The inverse of this equivalence is also easy to describe:
given a geometric morphism $ f_* \colon  \XX \to \BB $, the associated $\BB$-topos is given by $ f_* \Univ[\XX] $, the image of the universe of $ \XX $ along $ f_*\colon \Cat(\XX)\to\Cat(\BB) $. Explicitly, this sheaf is given by $\Over{\XX}{f^\ast(-)}$.
We will constantly use the above theorem to switch back and forth between $\BB$-topoi and $\infty$-topoi over $\BB$.

\section{Proper geometric morphisms}
\label{sec:ProperMorphisms}

In this chapter, we study the relationship between proper and compact morphisms of $\infty$-topoi. Most of the chapter is devoted to the proof of Theorem~\ref{thm:MainTheorem}, which states that a geometric morphism $\XX\to\BB$ is proper precisely if it is compact. The notion of properness of a geometric morphism of $\infty$-topoi is due to Lurie and can be defined as follows:
\begin{definition}[{\cite[Definition~7.3.1.4]{htt}}]
	\label{def:Proper}
	Let $p_\ast\colon \XX\to \BB$ be a geometric morphism of $\infty$-topoi. We say that $f_\ast$ is \emph{proper} if for every commutative diagram
\[\begin{tikzcd}
	{\YY'} & \YY & \XX \\
	{\AA'} & \AA & \BB
	\arrow["{g_*}", from=1-2, to=1-3]
	\arrow["{p_*}", from=1-3, to=2-3]
	\arrow["{q_*}"', from=1-2, to=2-2]
	\arrow["{f'_*}"', from=2-1, to=2-2]
	\arrow["{q'_*}"', from=1-1, to=2-1]
	\arrow["{g'_*}", from=1-1, to=1-2]
	\arrow["{f_*}"', from=2-2, to=2-3]
\end{tikzcd}\]
	in $\RTopS$ in which both squares are cartesian, the left square is left adjointable, in the sense that the mate transformation $(f^\prime)^\ast q_\ast\to q^\prime_\ast(g^\prime)^\ast$ is an equivalence. 
\end{definition}

\begin{example}
    \label{ex:closedimmersions}
	It follows from \cite[Proposition 7.3.12 and Corollary 7.3.2.13]{htt} that any \emph{closed immersion} of $ \infty $-topoi, in the sense of  \cite[Definition 7.3.2.6]{htt}, is proper.
\end{example}

\begin{example}
    Let $ p \colon Y \to X$ be a proper and separated morphism of topological spaces.
    In \S~\ref{sec:Topology} we will prove that then the geometric morphism $ p_* \colon \Shv(Y) \to \Shv(X) $ is proper, generalising a result of Lurie~\cite[Theorem 7.3.1.16]{htt}.
\end{example}
Before we can prove Theorem~\ref{thm:MainTheorem}, we first need to make precise what we mean by a \emph{compact} geometric morphism. We do so in \S~\ref{sec:compactBTopoi}. In \S~\ref{sec:toposicCone} and~\S~\ref{sec:pullbacksLocalisations}, we discuss two auxiliary steps that are required for the proof of Theorem~\ref{thm:MainTheorem}: the $\infty$-toposic cone construction and the compatibility of pullbacks with localisations of $\infty$-topoi. Lastly, we put everything together in \S~\ref{sec:proof} to finish the proof.

In \S~\ref{sec:Topology} we discuss how we can apply Theorem~\ref{thm:MainTheorem} to show Theorem~\ref{thm:ProperMapsTopology}, which says that the geometric morphism associated with a proper and separated map of topological spaces is proper. Finally, in~\ref{sec:coefficients} we discuss a variant of Theorem~\ref{thm:MainTheorem} in which we allow coefficients in an arbitrary compactly generated $\infty$-category.

\subsection{Compact $\BB$-topoi}
\label{sec:compactBTopoi}
Recall that an $\infty$-topos $\XX$ is said to be \emph{compact} if the global sections functor $\Gamma_{\XX}\colon\XX\to\SS$ preserves filtered colimits. In this sections, we study the $\BB$-toposic analogue of this notion. To that end, observe that as $\Univ[\BB]$ is the final $\BB$-topos, there is a unique geometric morphism $\Gamma_{\I{X}}\colon\I{X}\to\Univ[\BB]$ for every $\BB$-topos $\I{X}$, which we refer to as the \emph{global sections functor}. We may now define:
\begin{definition}
	\label{def:compactBTopos}
	A $\BB$-topos $\I{X}$ is said to be \emph{compact} if the global sections functor $\Gamma_{\I{X}}\colon\I{X}\to\Univ[\BB]$ preserves filtered colimits, i.e.\  is $\IFilt$-cocontinuous. We say that a geometric morphism $p_* \colon \XX \to \BB$ is compact if the associated $\BB$-topos $p_\ast\Univ[\XX]$ is compact. 
\end{definition}

\begin{example}
	\label{ex:finiteGroupoidCompact}
	If $A\in\BB$ is an arbitrary object, the \'etale geometric morphism $ (\pi_A)_\ast\colon\BB_{/A} \to \BB $ is compact if and only if $ A $ is \emph{internally compact} in $ \BB $, i.e.\ if the functor $\map{\Univ}(A,-)\colon\Univ\to\Univ$ preserves filtered colimits. To see this, first note that $(\pi_A)_\ast$ corresponds to the \'etale $\BB$-topos $\iFun(A,\Univ[\BB])$ (see~\cite[\S~3.2.9]{PresTop}) and the unique geometric morphism into $\Univ[\BB]$ is given by the limit functor $\lim_A\colon\iFun(A,\Univ[\BB])\to\Univ[\BB]$ (as its left adjoint $\diag_A$ is again a right adjoint and therefore preserves all limits~\cite[Proposition~5.2.5]{Colimits}). Moreover, since $A\simeq\colim_A \diag_A 1_{\Univ}$, where $1_{\Univ}\colon 1_{\BB}\to\Univ$ is the section endoding the final object $1_{\BB}\in\BB$ (see~\cite[Proposition~4.4.1]{Colimits}), the adjunctions $\colim_A\dashv \diag_A\dashv \lim_A$ imply that we obtain an identification $\map{\Univ}(A,-)\simeq\map{\Univ}(1_{\Univ}, \lim_A\diag_A)\simeq \lim_A\diag_A$ (since $\map{\Univ}(1_{\Univ},-)$ is equivalent to the identity, see~\cite[Proposition~4.6.3]{Yoneda}). Hence $\lim_A$ preserving filtered colimits implies that $A$ is internally compact. To see the converse, note that by~\cite[Corollary~2.2.3.8]{PresTop}, $A$ being internally compact is equivalent to $ A $ being locally constant with compact values. If this is the case, then the fact that $\IFilt$-cocontinuity can be checked locally in $\BB$ (see~\cite[Remark~5.2.3]{Colimits}) allows us to reduce to the case where $A$ is constant with compact value. In other words, $A$ is a retract of a finite $\BB$-groupoid, so that we may further reduce to the case where $A$ is already finite. In this case, $\lim_A$ preserves filtered colimits by the very definition of filteredness. 
\end{example}

\begin{warning}
    \label{warning:CompactIsNotExternalColims}
	In the context of Definition~\ref{def:compactBTopos}, it is essential that we require $\Gamma_{\I{X}}\colon\I{X}\to\Univ[\BB]$ to be $\IFilt$-cocontinuous instead of just asking for $p_\ast$ to preserve filtered colimits. In fact, if $A\in\BB$ is an arbitrary object, we saw in Example~\ref{ex:finiteGroupoidCompact} that $(\pi_A)_\ast\colon\Over{\BB}{A}\to\BB$ is compact if and only if $A$ is internally compact. On the other hand, $(\pi_A)_\ast$ preserves filtered colimits if and only if the functor $\ihom_{\BB}(A,-)\colon \BB\to\BB$ preserves filtered colimits (where $\ihom_{\BB}$ denotes the internal hom of $\BB$). By~\cite[Proposition~4.4.11]{Colimits}, $A$ being internally compact implies that $\ihom_{\BB}(A,-)$ preserves filtered colimits, but the converse is not true in general. For example, if $X$ is a coherent space, then any quasi-compact open $U\subset X$ defines an object in the $\infty$-topos $\Shv(X)$ satisfying the latter condition (since quasi-compact opens in $X$ define compact objects in $\Shv(X)$ and generate this $\infty$-topos under colimits). On the other hand, $U$ is in general quite far from being locally constant and can therefore not be internally compact.
\end{warning}

\begin{remark}
    \label{rem:Spec(R)Counterexample}
	Let $ \XX $ be a $ 1 $-localic $ \infty $-topos. 
	If $ \XX $ is compact, the associated $ 1 $-topos $ \Disc(\XX) $ of $0$-truncated objects in $\XX$ is \emph{tidy} in the sense of~\cite{Moerdijk2000}.
	However, the converse it not true in general.
	For example, any coherent $ 1 $-topos is tidy, but $ 1 $-localic coherent $ \infty $-topoi are not compact in general.
	An explicit counterexample is $ \operatorname{Spec}(\mathbb{R})_{\et}\simeq\Fun ( B (\mathbb{Z}/ 2\mathbb Z),\SS) $, which cannot be tidy since $  B (\mathbb{Z}/ 2\mathbb Z) \simeq \mathbb{RP}^\infty $ is not compact in $ \SS $.
\end{remark}

\begin{example}
    \label{ex:cdstructurecompact}
	Any $ \infty $-topos of the form $ \Shv^{\tau}(\CC) $ where $ \CC $ is an $ \infty $-category with an initial and a terminal object and $ \tau $ a topology generated by a cd-structure is compact.
	This follows since under these assumption $ \Shv^{\tau}(\CC) \to \PSh(\CC) $ commutes with filtered colimits and $ \PSh(\CC) $ is always compact if $ \CC $ has a terminal object.
	Example of such topologies from algebraic geometry include the Zariski-, Nisnevich- and cdh-topology.
\end{example}

\begin{lemma}
	\label{lem:properclosedunderretracs}
	Compact $ \BB $-topoi are stable under retracts in $ \RTop(\BB) $.
\end{lemma}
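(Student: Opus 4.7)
The plan is to exploit the retract data together with the compactness of $\I{X}$ to construct an explicit two-sided inverse to the canonical comparison map witnessing the cocontinuity of $\Gamma_{\I{Y}}$ on internally filtered diagrams. I write the retract as geometric morphisms $f_\ast\colon\I{Y}\to\I{X}$ and $g_\ast\colon\I{X}\to\I{Y}$ in $\RTop(\BB)$ with $g_\ast f_\ast\simeq\id_{\I{Y}}$, where $\I{X}$ is compact. Since $\Univ[\BB]$ is terminal in $\RTop(\BB)$, the associated global sections functors automatically factor, yielding canonical equivalences $\Gamma_{\I{Y}}\simeq\Gamma_{\I{X}}f_\ast$ and $\Gamma_{\I{X}}\simeq\Gamma_{\I{Y}}g_\ast$.

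Fix an internally filtered $\BB$-category $\I{I}$ and a diagram $F\colon\I{I}\to\I{Y}$. I would introduce the canonical lax comparison maps $\mu\colon\colim_{\I{I}} f_\ast F\to f_\ast\colim_{\I{I}} F$ in $\I{X}$ and $\nu\colon\colim_{\I{I}} F\simeq\colim_{\I{I}} g_\ast f_\ast F\to g_\ast\colim_{\I{I}} f_\ast F$ in $\I{Y}$. Functoriality of the lax comparison under composition of functors, applied to $g_\ast f_\ast\simeq\id_{\I{Y}}$, then produces $g_\ast(\mu)\circ\nu\simeq\id_{\colim_{\I{I}} F}$. Compactness of $\I{X}$ supplies the equivalence $\colim_{\I{I}}\Gamma_{\I{X}}(f_\ast F)\simeq\Gamma_{\I{X}}(\colim_{\I{I}} f_\ast F)$ which, combined with the identifications above, shows that the canonical comparison $\gamma\colon\colim_{\I{I}}\Gamma_{\I{Y}}(F)\to\Gamma_{\I{Y}}(\colim_{\I{I}} F)$ agrees with $\Gamma_{\I{X}}(\mu)$, and that $\Gamma_{\I{Y}}(\nu)$ corresponds to a candidate inverse $\delta\colon\Gamma_{\I{Y}}(\colim_{\I{I}} F)\to\colim_{\I{I}}\Gamma_{\I{Y}}(F)$.

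The composite $\gamma\circ\delta\simeq\Gamma_{\I{Y}}(g_\ast(\mu)\circ\nu)\simeq\id$ is immediate from the previous step. For the reverse composite $\delta\circ\gamma\simeq\id$, I will appeal to the universal property of the internal colimit, reducing the check to the tautology that $\nu$ post-composed with the cocone of $\colim_{\I{I}} F$ agrees with $g_\ast$ applied to the cocone of $\colim_{\I{I}} f_\ast F$. The hard part will be ensuring the coherence of this last verification against the chain of identifications introduced above; all other steps reduce to routine manipulations of the retract identity and the compactness of $\I{X}$. This will establish that $\gamma$ is an equivalence, hence that $\Gamma_{\I{Y}}$ is $\IFilt$-cocontinuous and $\I{Y}$ is compact.
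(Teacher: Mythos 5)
Your plan is correct and can be completed, but it takes a genuinely different route from the paper's proof. The paper never constructs an inverse: writing the retract as $\I{Y}\xrightarrow{s_*}\I{X}\xrightarrow{r_*}\I{Y}$, it passes to the \emph{left} adjoints and uses functoriality of internal mapping functors to produce a retract diagram of functors
\[
\Gamma_{\I{Y}}\simeq\map{\I{Y}}(1,-)\xrightarrow{\;r^*\;}\map{\I{X}}(r^*1,r^*(-))\simeq\Gamma_{\I{X}}\circ r^*\xrightarrow{\;s^*\;}\Gamma_{\I{Y}},
\]
and then simply observes that $\Gamma_{\I{X}}\circ r^*$ preserves filtered colimits (the algebraic morphism $r^*$ is cocontinuous and $\Gamma_{\I{X}}$ is $\IFilt$-cocontinuous), so $\Gamma_{\I{Y}}$ does too, being a retract of a filtered-colimit-preserving functor: for each diagram the comparison map for $\Gamma_{\I{Y}}$ is a retract of an equivalence in the arrow category, hence an equivalence. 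You instead stay with the right adjoints, use terminality of $\Univ[\BB]$ in $\RTop(\BB)$ to get $\Gamma_{\I{Y}}\simeq\Gamma_{\I{X}}f_*$ and $\Gamma_{\I{X}}\simeq\Gamma_{\I{Y}}g_*$, and assemble an explicit two-sided inverse to the comparison map from the lax comparisons $\mu$, $\nu$ and the pasting identity coming from $g_*f_*\simeq\id$. Since $f_*$ and $g_*$ are right adjoints and need not preserve any colimits, you cannot invoke the retract principle directly, which is exactly why you are forced into the cocone-level check of $\delta\circ\gamma\simeq\id$. That check does close: both $\nu$ and the compactness equivalence are characterised by their compatibility with the colimit cocones (your ``tautology''), and all identifications you use come from the contractible mapping space $\map{\RTop(\BB)}(\I{Y},\Univ[\BB])$, so the coherence is harmless. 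The paper's route buys brevity and zero coherence bookkeeping; yours is more hands-on and uses only terminality of the universe plus mate calculus.

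One point to make explicit in a final write-up: compactness means $\IFilt$-cocontinuity, i.e.\ preservation of colimits indexed by filtered $\Over{\BB}{A}$-categories for \emph{every} $A\in\BB$, not just by filtered $\BB$-categories with globally defined diagrams $F\colon\I{I}\to\I{Y}$. Since retract diagrams in $\RTop(\BB)$, the compactness of $\I{X}$ and the terminality of $\Univ[{\Over{\BB}{A}}]$ are all stable under étale base change along $\pi_A^*$, your argument applies verbatim over each slice, but you should say so; the paper's formulation as a retract of functors $\I{Y}\to\Univ[\BB]$ absorbs this automatically.
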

\begin{proof}
	Let $ \I{X} $ be a compact $ \BB $-topos and
	\[
		\I{Y} \xrightarrow{s_*} \I{X} \xrightarrow{r_*} \I{Y}
	\]
	a retract diagram of $ \BB $-topoi.
	We thus get a retract diagram of functors
	\[
		\Gamma_{\I{Y}} \simeq \map{\I{Y}}(1,-) \xrightarrow{r^*} \map{\I{X}} (r^* 1, r^*(-)) \xrightarrow{s^*} \map{\I{Y}}(s^*r^*1,s^*r^*(-)) \simeq \Gamma_{\I{Y}}.
	\]
	Furthermore $  \map{\I{X}} (r^* 1, r^*(-)) \simeq \Gamma_{\I{X}} \circ r^*$ and thus $ \Gamma_{\I{Y}}$ preserves filtered colimits as a retract of a filtered colimit preserving functor.
\end{proof}

\begin{example}
	\label{ex:stably_compact_locales}
	The theory of $\BB$-locales, that we developed in \cite[\S~3.4]{PresTop}, can be used to give a large class of examples of compact $\BB$-topoi.
    For this recall that a $\BB$-locale is \emph{stably compact} if it is a retract of a coherent $ \BB $-locale, see \cite[Definition~3.4.7.1]{PresTop}. 
	We claim that the $\BB$-topos $ \IShv(\I{L}) $  of \emph{sheaves on} $\I{L}$, as defined in \cite[\S~3.4.4]{PresTop}, is a compact $ \BB $-topos.
	Indeed, using Lemma~\ref{lem:properclosedunderretracs} it suffices to see that $ \IShv(\I{L}) $ is compact, whenever $\I{L}$ is coherent.
	In this case recall from \cite[Proposition~3.4.7.8]{PresTop} that there is an equivalence $ \IShv(\I{L}) \simeq \IShv^{\fin}(\I{L}^{\compact}) $ that identifies sheaves on $\I{L}$ with \emph{finitary} sheaves on the internally compact objects of $ \I{L} $, see also \cite[Definition~3.4.7.3]{PresTop}.
	By \cite[Lemma~3.4.7.7]{PresTop} the embedding
	\[
		\IShv^{\fin}(\I{L}^{\compact}) \hookrightarrow \IPSh(\I{L}^{\compact})
	\]
	preserves filtered colimits.
	Thus it suffices to see that $ \Gamma_{\IPSh(\I{L}^{\compact})} $ preserves filtered colimits.
	Now note that because $ \I{L}^{\compact} $ has a final object $ 1 $, it follows that $ \Gamma_{\IPSh(\I{L}^{\compact})} \simeq \ev_1$ by \cite[Proposition~4.6.1]{Colimits}.
	In particular it preserves filtered colimits.
\end{example}

We are now ready to state the main result of this paper:
\begin{theorem}
	\label{thm:CompactProperBaseChange}
	A geometric morphism $p_* \colon \XX \to \BB $ is proper if and only if it is compact.
\end{theorem}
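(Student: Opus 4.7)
My plan is to prove the two implications separately. The forward direction (proper implies compact) is the easier one and uses the stability of properness under base change together with the preservation of filtered colimits by proper morphisms; the reverse direction follows the strategy outlined in the introduction, reducing the general case to that of a subtopos inclusion and then exploiting a filtered-colimit presentation of the localization.

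For the forward direction, suppose $p_\ast$ is proper. Since properness is preserved under base change, for every $A\in\BB$ the induced morphism $(p_A)_\ast\colon\Over{\XX}{p^\ast A}\to\Over{\BB}{A}$ is again proper, and likewise for further base changes within $\Over{\BB}{A}$. Proper morphisms between $\infty$-topoi are known to preserve filtered colimits, so each $(p_A)_\ast$ does, compatibly in $A$. Combining this with the local nature of $\IFilt$-cocontinuity and the descriptions in Examples~\ref{ex:TwoKindsOfBColimits} and~\ref{ex:twoExOfInternalClasses}(2), these combined preservation statements amount exactly to $\IFilt$-cocontinuity of the internal global sections functor $p_\ast\Univ[\XX]\to\Univ[\BB]$, i.e.\ to compactness in the sense of Definition~\ref{def:compactBTopos}.

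For the reverse direction, assume $p_\ast$ is compact and consider a cartesian square in $\RTopS$ with edges $f_\ast\colon\ZZ\to\BB$ and $p_\ast\colon\XX\to\BB$, producing $q_\ast\colon\WW\to\ZZ$ and $g_\ast\colon\WW\to\XX$. By Theorem~\ref{thm:TopoiAreRelativeTopoi} I may view $f_\ast$ as the $\BB$-topos $f_\ast\Univ[\ZZ]$ together with its unique geometric morphism to the final $\BB$-topos $\Univ[\BB]$. Using presentability of $f_\ast\Univ[\ZZ]$, I can choose a small $\BB$-category $\I{C}$ and a fully faithful left exact reflective embedding $f_\ast\Univ[\ZZ]\into\iFun(\I{C},\Univ[\BB])$ through which the global sections factor as the internal limit functor $\lim_{\I{C}}$. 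Applying $\Gamma_{\BB}$ gives a factorization $f_\ast\colon\ZZ\into\Fun_{\BB}(\I{C},\Univ[\BB])\to\BB$ and thereby splits the original pullback square into two stacked cartesian squares. The lower square, whose bottom edge is the internal limit, has upper-left corner $\Fun_{\BB}(\I{C},\Univ[\XX])$, and its base change equivalence reduces to the fact that $p_\ast$, being a right adjoint, commutes with pointwise limits. Moreover, the base-changed morphism $\Fun_{\BB}(\I{C},\Univ[\XX])\to\Fun_{\BB}(\I{C},\Univ[\BB])$ is still compact, because it applies $p_\ast$ pointwise and $\IFilt$-cocontinuity is local. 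Thus, after replacing $\BB$ by $\Fun_{\BB}(\I{C},\Univ[\BB])$, it suffices to treat the case where $f_\ast$ is a subtopos inclusion.

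In the subtopos case -- which is the main obstacle of the proof -- $f^\ast$ is a left exact localization of $\BB$ and $g^\ast$ is the corresponding left exact localization of $\XX$ at the $p^\ast$-image of the local equivalences. The decisive step is to exhibit $f^\ast$ as an internal colimit indexed by a filtered $\BB$-category $\I{I}$, together with a description of $g^\ast$ as the corresponding internal filtered colimit indexed by the $p^\ast$-pullback of $\I{I}$, in such a way that the two presentations are compatible across $p_\ast$. I would construct $\I{I}$ as a $\BB$-category of iterated approximations to the reflection functor obtained by repeated application of the modal operator attached to the localization, in the spirit of the $1$-toposic argument in~\cite[pp.~673--676]{johnstone2002}, but carried out intrinsically via internal $\BB$-category theory so as to bypass the use of sites, which are unavailable in the $\infty$-toposic setting. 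Once this presentation is in place, the base change equivalence $f^\ast p_\ast\simeq q_\ast g^\ast$ is immediate from compactness: the functor $p_\ast$ commutes with the internal filtered colimit computing $f^\ast$, and the compatible presentation of $g^\ast$ identifies the two sides of the exchange transformation on the nose.
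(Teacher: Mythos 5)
Your outline follows the same broad route as the paper (factor $f_\ast$ through $\Fun_\BB(\I{C},\Univ[\BB])$, reduce to a subtopos inclusion, present the localisation as a filtered colimit), but there are genuine gaps in both directions. In the forward direction, you argue that properness gives external filtered-colimit preservation for all slice base changes $(p_A)_\ast$ and that this ``amounts exactly to'' $\IFilt$-cocontinuity. That identification is unjustified: internal filtered $\BB$-categories are in general neither constant nor groupoidal, so Example~\ref{ex:TwoKindsOfBColimits} does not cover them (the $\BB$-category $\Univ[S]^{\op}$ used later is a typical example), and Warning~\ref{warning:CompactIsNotExternalColims} shows that internal compactness is strictly stronger than external filtered-colimit preservation. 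The paper's argument (Lemma~\ref{lem:properImpliesCompact}) instead exploits that for a filtered $\BB$-category $\I{I}$ the diagonal $\diag\colon\Univ_\BB\to\iFun(\I{I},\Univ[\BB])$ is a geometric morphism (because $\colim_{\I{I}}$ is left exact) and applies properness to the pullback of $p_\ast$ along this non-\'etale base change; some such use of the full strength of properness is needed.

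In the converse direction, the heart of the matter is exactly the step you defer: the sheafification formula presents $j'_\ast (j')^\ast$ as a (transfinite, plus-construction) filtered colimit \emph{internal to $\XX$}, whereas compactness of $p_\ast$ only gives commutation with filtered colimits \emph{internal to $\BB$}. Bridging this is Proposition~\ref{prop:comparisonofcolimits}, which the paper proves by factoring through the toposic cone $\I{X}^{\triangleright}$ (so that the locally contractible case, Lemma~\ref{lem:AlphaIsEquForLocWeaklyContr}, applies), using that closed immersions are proper, and then invoking Lemma~\ref{lem:compactMorphismCommutesWithSheafification}; your sketch contains neither these ideas nor a substitute, so the ``decisive step'' remains an assertion. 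Two further points are missing: the compactness of $\Fun_\BB(\I{C},p_\ast\Univ[\XX])\to\Fun_\BB(\I{C},\Univ[\BB])$ is not simply ``pointwise plus locality'' --- the paper needs Lemma~\ref{lem:leftAdjointabilityDeterminedByGlobalSections}, Lemma~\ref{lem:pointwiseCompactness} and Proposition~\ref{prop:ProperIsH-Local}, via the conservative restriction along $\I{C}^{\core}\into\I{C}$ --- and Definition~\ref{def:Proper} demands left adjointability of the \emph{left} square in any pair of stacked pullbacks, so after establishing base change for a single pullback of $p_\ast$ you must still handle the second square (the paper does this by reusing the factorisation, Corollary~\ref{cor:PresheafCategoriesCompact}, and full faithfulness of $j'_\ast$). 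As it stands, the proposal records the correct strategy but omits the technical content that makes it work.
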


The full proof of Theorem~\ref{thm:CompactProperBaseChange} is rather involved and will be given in \S~\ref{sec:proof}. One implication, however, is straightforward:
\begin{lemma}
	\label{lem:properImpliesCompact}
	Let $ p_* \colon \XX \to \BB $ be a proper geometric morphism. Then $ p_* $ is compact.
\end{lemma}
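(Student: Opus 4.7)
The plan is to use the stability of properness under base change in order to realise the preservation of $\I{I}$-indexed colimits, for $\I{I}$ a filtered $\BB$-category, as a direct instance of proper base change.

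First, properness is manifestly stable under base change: any pullback square against $p^{\AA}_\ast\colon\XX\times_\BB\AA\to\AA$ (the base change of proper $p_\ast$ along $f_\ast\colon\AA\to\BB$) extends by horizontal pasting to a pullback square against $p_\ast$, and is therefore left-adjointable. Combined with the local nature of $\IFilt$-cocontinuity---the functor $\pi_A^\ast\Gamma_{p_\ast\Univ[\XX]}$ coincides with the global sections of the base-changed proper morphism $(p^A)_\ast\colon\XX\times_\BB\Over{\BB}{A}\to\Over{\BB}{A}$---this reduces the claim to showing that for every filtered $\BB$-category $\I{I}$, the functor $\Gamma_{p_\ast\Univ[\XX]}\colon p_\ast\Univ[\XX]\to\Univ[\BB]$ preserves $\I{I}$-indexed colimits.

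Second, for such an $\I{I}$ the $\BB$-category $\iFun(\I{I},\Univ[\BB])\simeq\IPSh(\I{I}^\op)$ is a presheaf $\BB$-topos, corresponding via Theorem~\ref{thm:TopoiAreRelativeTopoi} to a geometric morphism $f_\ast\colon\YY\to\BB$. Filteredness of $\I{I}$ ensures that $\colim_{\I{I}}\colon\iFun(\I{I},\Univ[\BB])\to\Univ[\BB]$ is left exact, hence an algebraic morphism of $\BB$-topoi, which under the equivalence $\RTop(\BB)\simeq(\RTopS)_{/\BB}$ corresponds to a section $s_\ast\colon\BB\to\YY$ of $f_\ast$ whose inverse image $s^\ast$ is identified internally with $\colim_{\I{I}}$.

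Finally, consider the diagram of iterated pullbacks
\[
\begin{tikzcd}
\XX \arrow[r,"s'_\ast"]\arrow[d,"p_\ast"'] & \XX\times_\BB\YY \arrow[r,"h_\ast"]\arrow[d,"q_\ast"] & \XX \arrow[d,"p_\ast"]\\
\BB \arrow[r,"s_\ast"] & \YY \arrow[r,"f_\ast"] & \BB
\end{tikzcd}
\]
whose right square is the pullback defining $\XX\times_\BB\YY$ and whose left square is the further pullback of $q_\ast$ along $s_\ast$ (a pullback because $\YY\times_\YY\BB\simeq\BB$, so $s'_\ast$ is a section of $h_\ast$ and $p_\ast$ is the base change of $q_\ast$). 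By base-change stability, $q_\ast$ is proper, so the left square is left-adjointable, yielding an equivalence $s^\ast q_\ast\simeq p_\ast (s')^\ast$ of functors $\XX\times_\BB\YY\to\BB$. Under the internal/external dictionary, this identifies with the mate $\colim_{\I{I}}\circ \iFun(\I{I},\Gamma_{p_\ast\Univ[\XX]}) \xrightarrow{\simeq} \Gamma_{p_\ast\Univ[\XX]}\circ \colim_{\I{I}}$, which is precisely the statement that $\Gamma_{p_\ast\Univ[\XX]}$ preserves $\I{I}$-indexed colimits. The main technical obstacle is verifying this dictionary, in particular identifying $\XX\times_\BB\YY$ internally with the $\BB$-topos $\iFun(\I{I},p_\ast\Univ[\XX])$ and identifying $(s')^\ast$ with the colimit functor of this $\BB$-topos.
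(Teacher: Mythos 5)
Your proposal is correct and is essentially the paper's own argument, transported across the equivalence $\RTop(\BB)\simeq\Over{(\RTopS)}{\BB}$: the paper performs the same reduction to a fixed filtered $\I{I}$, uses left exactness of $\colim_{\I{I}}$ to make $\diag$ (your section $s_\ast$) a geometric morphism, forms the same two pullback squares with horizontal maps $\diag$ and $\lim$ on $\iFun(\I{I},-)$, and applies properness to the left one; the identification you flag as the main obstacle, namely $\XX\times_{\BB}\YY\simeq\Fun_{\BB}(\I{I},p_\ast\Univ[\XX])$ with $(s')^\ast$ the (external) colimit functor, is exactly what the paper imports from \cite[Example~3.2.7.5]{PresTop}. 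The only point to phrase a bit more carefully is that preservation of $\I{I}$-indexed colimits is an internal, section-wise statement, so your global-sections equivalence must be invoked for every \'etale base change $\Over{\XX}{p^\ast A}\to\Over{\BB}{A}$ (again proper, with $\pi_A^\ast\I{I}$ again filtered) -- which is precisely how the paper concludes that the mate of the left square is invertible as a transformation of functors of $\BB$-categories.
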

\begin{proof}
	Let us denote by $\I{X}=p_\ast\Univ[\XX]$ the $\BB$-topos that corresponds to $p_\ast$.
	Note that if $A\in\BB$ is an arbitrary object, the induced morphism $\Over{\XX}{p^\ast A}\to\Over{\BB}{A}$ is proper as well. As this is the geometric morphism which corresponds to the $\Over{\BB}{A}$-topos $\pi_A^\ast\I{X}$, we may (after replacing $\BB$ with $\Over{\BB}{A}$) reduce to the case where we have to show that if $ \I{I} $ is a filtered $ \BB $-category, then $\Gamma_{\I{X}}$ preserves $\I{I}$-filtered colimits.
	By definition of filteredness, the colimit functor $ \colim_{\I{I}} \colon \iFun(\I{I},\Univ[\BB]) \to \Univ_\BB$ is left exact, which implies that $ \diag \colon  \Univ_\BB \to \iFun(\I{I},\Univ[\BB])$ defines a geometric morphism of $ \BB $-topoi.
	We may therefore consider the two pullback squares
	\[\begin{tikzcd}
		{\I{X}} & \iFun(\I{I},\I{X})& {\I{X}} \\
		{\Univ_{\BB}} & \iFun(\I{I},\Univ[\BB])& {\Univ_\BB}
		\arrow[from=1-3, to=2-3, "\Gamma_{\I{X}}"]
		\arrow[from=2-2, to=2-3, "\lim"]
		\arrow["{(\Gamma_{\I{X}})_\ast}"', from=1-2, to=2-2]
		\arrow[from=1-2, to=1-3, "\lim"]
		\arrow["\diag", from=2-1, to=2-2]
		\arrow["{\Gamma_{\I{X}}}"', from=1-1, to=2-1]
		\arrow["\diag", from=1-1, to=1-2]
	\end{tikzcd}\]
	in $\RTop(\BB)$ (see~\cite[Example~3.2.7.5]{PresTop}).
	As for every $A\in\BB$ the geometric morphism $\Gamma_{\I{X}}(A)\colon \Over{\XX}{p^\ast A}\to\Over{\BB}{A}$ is proper, it follows that the mate of the left square is an equivalence.
	This precisely means that $ \Gamma_{\I{X}}$ commutes with $ \I{I} $-indexed colimits, as desired.
\end{proof}

\subsection{The toposic cone}
\label{sec:toposicCone}
Every topological space $X$ admits a closed immersion into a contractible and locally contractible space $Y$, for example by setting $Y=C(X)$, where $C(X)$ is the \emph{cone} of $X$. In this section, we discuss a $\BB$-toposic analogue of this observation.
To that end, if $\I{X}$ is a $\BB$-topos, recall that the \emph{comma $\BB$-category} $\Comma{\I{X}}{\I{X}}{\Univ}$ is defined via the pullback square
\[\begin{tikzcd}
	\Comma{\I{X}}{\I{X}}{\Univ} \arrow[r, "e^\ast"]& \iFun(\Delta^1,\I{X}) \\
	\Univ & {\I{X}}
	\arrow[from=1-1, to=1-2]
	\arrow[from=1-2, to=2-2, "d_0"]
	\arrow["\const_{\I{X}}"', from=2-1, to=2-2]
	\arrow[from=1-1, to=2-1, "j^\ast"]
\end{tikzcd}\]
in $\Cat(\BB)$, where $\const_{\I{X}}$ denotes the unique algebraic morphism $\Univ[\BB]\to\I{X}$, i.e.\ the left adjoint of $\Gamma_{\I{X}}$. By~\cite[Proposition~3.2.6.1]{PresTop}, this is a pullback diagram in $\LTop(\BB)$, so that $\Comma{\I{X}}{\I{X}}{\Univ}$ is a $\BB$-topos and $j^\ast$ and $e^\ast$ are algebraic morphisms.
\begin{definition}
	\label{def:toposicMappingCone}
	For any $\BB$-topos $\I{X}$, we refer to the $\BB$-topos $\Comma{\I{X}}{\I{X}}{\Univ}$ as its \emph{$\BB$-toposic right cone} and denote it by $\I{X}^\triangleright$.
\end{definition}

If $\I{X}$ is a $\BB$-topos, let $i^\ast\colon \I{X}^\triangleright\to\I{X}$ be the algebraic morphism that is obtained by composing the functor $d_1\colon\iFun(\Delta^1,\I{X})\to\I{X}$ with the upper horizontal map in the defining pullback square of $\I{X}^\triangleright$.

\begin{remark}
	\label{rem:mappingConeRecollement}
	Suppose that $\I{X}$ is a $\BB$-topos and let $f_\ast\colon \XX\to\BB$ be the associated geometric morphism of $\infty$-topoi. Then the $\infty$-topos $\Cone(f)=\Gamma_{\BB}(\I{X}^\triangleright)$ recovers the comma $\infty$-category $\Comma{\XX}{\XX}{\BB}$ and is therefore the \emph{recollement} of $\BB$ and $\XX$ along $f^\ast$ in the sense of~\cite[\S~A.8]{Lurie2017}. In particular, $ j_*\colon \BB \to \Cone(f)$ is an open and $ i_* \colon \XX \to \Cone(f) $ a closed immersion of $\infty$-topoi. Note that this in particular implies that $i_\ast\colon \I{X}\to\I{X}^\triangleright$ and $j_\ast\colon\Univ\to\I{X}^\triangleright$ are (section-wise) fully faithful.
\end{remark}

\begin{remark}
	In the situation of Remark~\ref{rem:mappingConeRecollement}, the $\infty$-topos $\Cone(f)$ sits inside a pushout square
	\[\begin{tikzcd}
		\XX & \BB \\
		{\XX \otimes \Delta^1 } & {\Cone(f)}
		\arrow["{f_*}", from=1-1, to=1-2]
		\arrow[from=1-2, to=2-2]
		\arrow["{\id \otimes s^0}"', from=1-1, to=2-1]
		\arrow[from=2-1, to=2-2]
	\end{tikzcd}\]
	in $\RTopS$, where $\XX\otimes\Delta^1$ denotes the tensoring in $\RTopS$ over $\CatS$. Therefore $ \Cone(f) $ is to be thought of as the mapping cone of $f_\ast$.
\end{remark}

Recall from~\cite[Definition~3.3.1.1]{PresTop} that a $\BB$-topos $\I{X}$ is said to be \emph{locally contractible} if $\const_{\I{X}}\colon\Univ[\BB]\to\I{X}$ has a left adjoint $\pi_{\I{X}}$. The following proposition expresses the fact that the $\BB$-toposic right cone $\I{X}^{\triangleright}$ is contractible and locally contractible (in the $\BB$-toposic sense):
\begin{proposition}
	\label{prop:MappingConeFactorisation}
	For every $\BB$-topos $\I{X}$, the $\BB$-topos $\I{X}^\triangleright$ is locally contractible, and the additional left adjoint $\pi_{\I{X}}$ of $\const_{\I{X}^{\triangleright}}$ is equivalent to $j^\ast$. In particular,  $\pi_{\I{X}}$ preserves finite limits.
\end{proposition}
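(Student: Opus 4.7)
The proof proceeds by establishing the adjunction $j^\ast \dashv \const_{\I{X}^\triangleright}$ directly; this identifies $\pi_{\I{X}}$ with $j^\ast$, and the finite-limit preservation then follows automatically from $j^\ast$ being an algebraic morphism (as one of the canonical projections in the defining pullback in $\LTop(\BB)$).

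To carry this out, I first describe $\const_{\I{X}^\triangleright}$ explicitly. By the universal property of the pullback in $\LTop(\BB)$ defining $\I{X}^\triangleright$, the unique algebraic morphism $\const_{\I{X}^\triangleright}\colon \Univ[\BB] \to \I{X}^\triangleright$ is determined by its two composites: $j^\ast \const_{\I{X}^\triangleright} = \id_{\Univ[\BB]}$, and $e^\ast \const_{\I{X}^\triangleright}$ equals the unique algebraic morphism $\Univ[\BB] \to \iFun(\Delta^1,\I{X})$. This latter algebraic morphism is nothing but $\diag \circ \const_{\I{X}}$: indeed, $\diag \colon \I{X} \to \iFun(\Delta^1,\I{X})$ is both a left adjoint (with right adjoint $d_1$) and a right adjoint (of $d_0$), so it is a left exact left adjoint and hence an algebraic morphism; the composite $\diag \circ \const_{\I{X}}$ is therefore algebraic, and by uniqueness it agrees with the unique algebraic morphism out of $\Univ[\BB]$.

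Next I verify that $j^\ast$ is left adjoint to $\const_{\I{X}^\triangleright}$. Since $\I{X}^\triangleright$ is built as a pullback of $\BB$-categories, mapping $\BB$-groupoids into $\const_{\I{X}^\triangleright}(A)$ decompose as a fibre product
\[
\map{\I{X}^\triangleright}(Z,\const_{\I{X}^\triangleright}(A)) \simeq \map{\Univ[\BB]}(j^\ast Z, A) \times_{\map{\I{X}}(\const_{\I{X}}(j^\ast Z), \const_{\I{X}}(A))} \map{\iFun(\Delta^1,\I{X})}(e^\ast Z, \diag(\const_{\I{X}}(A))).
\]
The adjunction $d_0 \dashv \diag$ identifies the rightmost factor with $\map{\I{X}}(d_0(e^\ast Z), \const_{\I{X}}(A)) = \map{\I{X}}(\const_{\I{X}}(j^\ast Z), \const_{\I{X}}(A))$, and under this identification the pullback map induced by $d_0$ becomes the identity. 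The fibre product therefore collapses to $\map{\Univ[\BB]}(j^\ast Z, A)$, establishing the desired adjunction.

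The chief subtlety is simply to interface the pullback description of $\I{X}^\triangleright$ with the adjoint pair $d_0 \dashv \diag$ correctly; once this is set up, everything else is routine. Both the identification of $\diag \circ \const_{\I{X}}$ as the unique algebraic morphism and the collapse of the fibre product are formal consequences of the adjoint structure $d_0 \dashv \diag \dashv d_1$ on the arrow $\BB$-topos.
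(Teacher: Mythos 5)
Your proof is correct, but it takes a genuinely different route from the paper's. Both arguments ultimately rest on the adjunction $d_0\dashv \diag=s_0$ on $\iFun(\Delta^1,\I{X})$ and on the defining pullback square, but the paper never computes mapping spaces: it invokes the stability of right adjoints under pullback (the dual of \cite[Lemma~6.3.9]{Colimits}) to identify $j_\ast$ with the pullback of $s_0$ along $e^\ast$, deduces that $j_\ast$ is cocontinuous because $s_0$ is and because $\LPr(\BB)\into\Cat(\BB)$ preserves limits, and then concludes $j_\ast\simeq\const_{\I{X}^{\triangleright}}$ from the universal property of $\Univ$ (a cocontinuous functor out of $\Univ$ preserving the final object is essentially unique), so that $j^\ast\dashv\const_{\I{X}^{\triangleright}}$ is automatic. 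You instead pin down $\const_{\I{X}^{\triangleright}}$ componentwise, using initiality of $\Univ[\BB]$ in $\LTop(\BB)$ together with the observation that $\diag\circ\const_{\I{X}}$ is algebraic (which is correct: $\diag$ is right adjoint to $d_0$, hence left exact, and left adjoint to $d_1$, hence cocontinuous), and then verify $j^\ast\dashv\const_{\I{X}^{\triangleright}}$ by hand, collapsing the fibre-product description of mapping spaces in the pullback via the invertibility of the counit $d_0\diag\to\id$ (i.e.\ full faithfulness of $\diag$); this collapse is exactly right. The one point you should make explicit is that the resulting equivalence $\map{\I{X}^\triangleright}(Z,\const_{\I{X}^{\triangleright}}(A))\simeq\map{\Univ[\BB]}(j^\ast Z,A)$ is natural in both variables, as required by the internal adjunction criterion of \cite[Proposition~3.3.4]{Colimits}; this does follow from your construction, since each identification you use (the fibre-product decomposition of the mapping functor of a pullback of $\BB$-categories, the adjunction equivalence for $d_0\dashv\diag$, and the identifications $j^\ast\const_{\I{X}^{\triangleright}}\simeq\id$ and $e^\ast\const_{\I{X}^{\triangleright}}\simeq\diag\const_{\I{X}}$) is an equivalence of functors rather than merely an objectwise one, but as written it is asserted only on objects. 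What the paper's route buys is precisely the avoidance of this naturality bookkeeping, by quoting general stability results for adjunctions and for presentable $\BB$-categories; what your route buys is a more self-contained and explicit identification of $\pi_{\I{X}}$ with $j^\ast$ that does not need the universal property of $\Univ$ as a free cocompletion.
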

\begin{proof}
	Since $s_0\colon \I{X}\to\iFun(\Delta^1,\I{X})$ is right adjoint to $d_0$ (for example by using~\cite[Proposition~3.1.15]{Colimits}) and by the dual of~\cite[Lemma~6.3.9]{Colimits}, the functor $j_\ast$ is the pullback of $s_0$ along $\epsilon^\ast$. Since $s_0$ is cocontinuous and as $\LPr(\BB)\into\Cat(\BB)$ preserves limits, this implies that $j_\ast$ must be cocontinuous as well and therefore equivalent to $\const_{\I{X}^{\triangleright}}$ (by the universal property of $\Univ$). As this shows that $j^\ast$ is left adjoint to $\const_{\I{X}^{\triangleright}}$, the claim follows.
\end{proof}

\begin{remark}
    For $1$-topoi, the factorization constructed above appears in the proof of \cite[Theorem C.3.3.14]{johnstone2002}.
\end{remark}

\subsection{Compatibility of pullbacks with localisations}\label{sec:pullbacksLocalisations}
The goal of this section is to establish the main technical step towards the proof of Theorem~\ref{thm:CompactProperBaseChange}, the fact that compact geometric morphisms commute with localisations of subtopoi:
\begin{proposition}
	\label{prop:SpecialCaseOfBeckChevalley}
	Consider a pullback square in $ \RTopS $
	\[\begin{tikzcd}
		{\mathcal{Z}^\prime} & {\mathcal{X}} \\
		{\mathcal{Z}} & {\mathcal{B}}
		\arrow["{j^\prime_*}", from=1-1, to=1-2]
		\arrow["p_*", from=1-2, to=2-2]
		\arrow["p^\prime_*", from=1-1, to=2-1]
		\arrow["j_*"', from=2-1, to=2-2]
	\end{tikzcd}\]
	where $ j_* $ is fully faithful and $ p_* $ is compact. 
	Then the mate natural transformation $ p^\prime_* (j^\prime)^* \to j^* p_* $ is an equivalence.
\end{proposition}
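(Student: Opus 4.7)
The plan is to deduce the desired base-change equivalence from the compactness hypothesis by presenting both $j^*$ and $(j')^*$ as internally filtered colimits in compatible ways, and then using that compactness forces $p_*$ to commute with these presentations. First, I would translate the setup into the internal language of $\BB$-topoi: by Theorem~\ref{thm:TopoiAreRelativeTopoi}, $p_*$ corresponds to a $\BB$-topos $\I{X} = p_*\Univ[\XX]$, and the fully faithful $j_*$ corresponds to a subtopos inclusion $\I{Z} \hookrightarrow \Univ[\BB]$. The pullback $\ZZ'$ then corresponds to the subtopos $\I{Z}' = \I{Z} \times_{\Univ[\BB]} \I{X} \hookrightarrow \I{X}$, obtained by base-changing the subtopos inclusion along the algebraic morphism $\I{X} \to \Univ[\BB]$. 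In this language, the hypothesis that $p_*$ is compact says exactly that the internal global sections functor $\Gamma_{\I{X}} \colon \I{X} \to \Univ[\BB]$ preserves internally filtered colimits.

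The next step is to express the reflection $j_*j^* \colon \Univ[\BB] \to \Univ[\BB]$ as an internally filtered colimit of approximating endofunctors. Since $j_*\dashv j^*$ is an accessible left exact localization corresponding to inverting an (internal) class of maps, one expects to present the reflection via a transfinite iteration of a one-step sheafification operation---a $\BB$-toposic analogue of the classical plus construction---so that $j_*j^*$ appears as an internally filtered colimit indexed by some filtered $\BB$-category $\I{I}$. Because subtopos inclusions are stable under base change along algebraic morphisms, applying $p^*$ then produces a parallel presentation of $(j')_*(j')^*$ as an internally filtered colimit inside $\I{X}$ indexed by the same $\I{I}$, intertwined with the $\BB$-level presentation via $p^*$. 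Plugging these into the mate transformation $p'_*(j')^* \to j^* p_*$ and using that $\Gamma_{\I{X}}$ commutes with $\I{I}$-indexed colimits by compactness, the mate reduces to a morphism of internally filtered colimits whose constituent pieces are equivalences by construction.

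The main obstacle is to produce the internally filtered colimit presentation of $j_*j^*$ in a manner that is manifestly stable under base change along $p^*$. In the $1$-toposic case treated by Moerdijk and Vermeulen, one relies on site-theoretic models and the classical plus construction, but such tools are not directly available in the $\infty$-toposic setting; an abstract replacement has to be engineered purely within the internal higher-categorical framework of~\cite{PresTop}, for instance by presenting the subtopos in terms of a local class of monomorphisms in $\Univ[\BB]$ and iterating an internally-defined reflection operation compatibly with algebraic morphisms.
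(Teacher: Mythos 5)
Your overall strategy is the same as the paper's: present $j_*j^*$ as a transfinite plus-construction indexed by (the opposite of) a cofiltered $\BB$-category $\Univ[S]$ coming from a bounded local class $S$ closed under finite limits, pull this presentation back along $p^*$ to get a presentation of $j'_*(j')^*$, and then use compactness of $p_*$ to commute $\Gamma_{p_*\Univ[\XX]}$ past these colimits (which, via Observation~\ref{obs:transposingColimits} and cartesian closedness, is the content of Lemma~\ref{lem:compactMorphismCommutesWithSheafification}). However, there is a genuine gap at exactly the point you flag as the ``main obstacle'': the claim that applying $p^*$ to the $\BB$-level presentation yields a presentation of $(j')_*(j')^*$ does not follow from the stability of subtopos inclusions under base change. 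What base change gives you is that $\ZZ'\subset\XX$ is the localisation at the smallest local class generated by $p^*S$; the general formula of \cite[Proposition~3.2.10.14]{PresTop} would then express $j'_*(j')^*$ as a plus-construction built from some local class $S'$ \emph{in} $\XX$, i.e.\ as a colimit indexed by an $\XX$-internal filtered category, which compactness of $p_*$ (a statement about colimits indexed by \emph{$\BB$-internal} filtered categories) does not touch. The nontrivial assertion is that the iteration built from the transposed diagram $\iota'\colon p^*(\Univ[S])\to\Univ[\XX]$ — which in general is neither fully faithful nor the inclusion of a local class closed under finite limits — still computes $j'_*(j')^*$. This is Proposition~\ref{prop:comparisonofcolimits}, and its proof is the technical heart of the argument, not an engineering detail.

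The paper resolves it by factoring $p_*$ through the $\BB$-toposic cone: by Proposition~\ref{prop:MappingConeFactorisation} one writes $p_*$ as a closed immersion $i_*$ followed by a locally contractible morphism $h_*$ whose extra left adjoint $h_!$ is left exact. In the locally contractible case one shows directly (Lemma~\ref{lem:AlphaIsEquForLocWeaklyContr}) that $h^*(\Univ[S])$ \emph{is} the full subcategory of $\Univ[\YY]$ on a local class $W$ closed under finite limits generating the right localisation, so the formula applies verbatim; for the closed immersion one uses that it is proper (Example~\ref{ex:closedimmersions}), hence compact (Lemma~\ref{lem:properImpliesCompact}), so that Lemma~\ref{lem:compactMorphismCommutesWithSheafification} transports the presentation across $i_*$. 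Without an argument of this kind (or some other proof that the pulled-back plus-construction computes the reflection onto $\ZZ'$), your reduction of the mate to a comparison of filtered colimits ``whose constituent pieces are equivalences by construction'' does not get off the ground, so as written the proposal is incomplete at its central step.
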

Intuitively, Proposition~\ref{prop:SpecialCaseOfBeckChevalley} should hold because the localisation functor $(j^\prime)^\ast\colon \XX \to \ZZ^\prime$ is given by an (internally) filtered colimit.
Indeed, we may pick a bounded local class $ S^{\prime}$ of morphisms in $ \XX $ that is closed under finite limits  in $\Fun(\Delta^1,\XX)$ such that $ (j^\prime)^* $ exhibits $ \ZZ^{\prime}$ as the Bousfield localisation of $ \XX $ at $ S^{\prime}$ (see \cite[Proposition~3.2.10.14]{PresTop}).
We denote by $\iota^\prime \colon \Univ[S^\prime] \into \Univ[\XX]$ the associated full subcategory. Then $S^\prime$ being bounded implies that $\Univ[S^\prime]$ is a small $\XX$-category, and $S^\prime$ being closed under finite limits in $\Fun(\Delta^1, \XX)$ implies that $\Univ[S^\prime]$ has finite limits and is therefore \emph{cofiltered} by \cite[Proposition~2.2.3.7]{PresTop} (i.e.\ its opposite $\Univ[S^\prime]^\op$ is filtered). Now by the formula in \cite[Proposition~3.2.10.14]{PresTop}, we find that for every $X\in\XX$ we obtain a canonical equivalence
\begin{equation*}
	j^\prime_\ast(j^\prime)^\ast(X)\simeq X^{\sh}_{\iota^\prime} = \colim_{\tau < \kappa} T_{\tau}^{\iota^\prime}X
\end{equation*}
where $\kappa$ is a suitably large regular cardinal and $T_{\tau}^{\iota^\prime} X$ is defined recursively by the condition that we have $T_{\tau + 1}^{\iota^\prime} X= \colim_{\Univ[S^\prime]^\op}\map{\Univ[\XX]}(\iota^\prime(-), T_{\tau}^{\iota^\prime} X)$ and that $T_{\tau}^{\iota^\prime}X = \colim_{\tau^\prime < \tau} T_{\tau^\prime}^{\iota^\prime} X$ when $\tau$ is a limit ordinal.
In particular, the endofunctor $j^\prime_\ast(j^\prime)^\ast$ is given by an (iterated) \emph{filtered} colimit. 
So intuitively, $p_\ast$ being compact should imply that this functor carries $j^\prime_\ast(j^\prime)^\ast(X)$ to $j_\ast j^\ast p_\ast(X)$, which precisely means that the mate transformation $p^\prime_\ast (j^\prime)^\ast(X)\to j^\ast p_\ast(X)$ is an equivalence. 
However, we have to be a bit careful at this point: the above formula for $j^\prime_\ast(j^\prime)^\ast$ exhibits $j^\prime_\ast(j^\prime)^\ast(X)$ as a filtered colimit \emph{internal to $\XX$}, whereas $p_\ast$ being compact only implies that this functor commutes with filtered colimits \emph{internal to $\BB$}. Hence, the main challenge is to rewrite the above formula in terms of a filtered colimit internal to $\BB$.

\begin{observation}
	\label{obs:transposingColimits}
	Let $ f_* \colon \XX \to \BB $ be a geometric morphism, $ \I{I} $ a $ \BB $-category and $ \I{C} $ an $ \XX $-category.
	On account of the commutative diagram
	\begin{equation*}
		\begin{tikzcd}
			f_\ast\I{C}\arrow[r, "\diag_{\I{I}}"]\arrow[dr, "f_\ast(\diag_{f^\ast\I{I}})"'] & \iFun[\BB](\I{I}, f_\ast\I{C})\arrow[d, "\simeq"]\\
			& f_\ast\iFun[\XX](f^\ast\I{I},\I{C}),
		\end{tikzcd}
	\end{equation*}
	the $\BB$-category $f_\ast \I{C}$ admits $\I{I}$-indexed colimits if and only if the $\XX$-category $\I{C}$ admits $f^\ast\I{I}$-indexed colimits, and we may identify the colimit functor $\colim_{\I{I}}\colon \iFun(\I{I}, f_\ast \I{C})\to f_\ast\I{C}$ with the composition 
	\[
	\iFun(\I{I}, f_\ast\I{C})\simeq f_\ast \iFun[\XX](f^\ast\I{I},\I{C})\xrightarrow{f_\ast(\colim_{f^\ast\I{I}})} f_\ast\I{C}.
	\] 
	By passing to global sections, this implies that for every diagram $d\colon \I{I}\to f_\ast \I{C}$ with transpose $\bar d\colon f^\ast\I{I}\to \I{C}$, we have a canonical equivalence $\col \BB_{\I{I}} d \xrightarrow{\simeq} \col \XX_{f^\ast\I{I}} \bar{d}$
	in the $\infty$-category $\Gamma_{\XX}(\I{C})=\Gamma_{\BB}(f_\ast \I{C})$ (where the superscripts emphasise internal to which $\infty$-topos the colimits are taken). We will repeatedly use this observation throughout this chapter.
\end{observation}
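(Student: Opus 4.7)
The observation has two parts that I would treat separately: the equivalence of sheaves $\iFun[\BB](\I{I}, f_\ast \I{C}) \simeq f_\ast\iFun[\XX](f^\ast\I{I},\I{C})$ together with the asserted commutativity of the triangle involving the diagonal functors, and then the resulting equivalence of colimit existence and the pointwise formula. The last clause about global colimits follows formally from evaluation at $1_\BB\in\BB$, so the real work is in the preceding claims.

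\emph{Step 1: the equivalence of internal mapping categories.} I would derive this as an enriched adjunction for $(f^\ast\dashv f_\ast)\colon\Cat(\BB)\leftrightarrows\Cat(\XX)$, the point being that $f^\ast$ preserves finite products: it is a left adjoint and its underlying functor $\BB\to\XX$ (which is used to define $f^\ast$ on sheaves by left Kan extension, see Example~\ref{ex:MainList}~(2)) is left exact. Consequently, for every auxiliary $\BB$-category $\I{J}$,
\begin{align*}
\map{\Cat(\BB)}(\I{J},\iFun[\BB](\I{I},f_\ast\I{C})) &\simeq \map{\Cat(\BB)}(\I{J}\times\I{I},f_\ast\I{C})\simeq \map{\Cat(\XX)}(f^\ast\I{J}\times f^\ast\I{I},\I{C})\\
&\simeq \map{\Cat(\XX)}(f^\ast\I{J},\iFun[\XX](f^\ast\I{I},\I{C}))\simeq \map{\Cat(\BB)}(\I{J},f_\ast\iFun[\XX](f^\ast\I{I},\I{C})),
\end{align*}
so the desired equivalence falls out by the Yoneda lemma, naturally in $\I{I}$ and $\I{C}$.

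\emph{Step 2: commutativity of the triangle.} The diagonal $\diag_{\I{I}}\colon f_\ast\I{C}\to\iFun[\BB](\I{I},f_\ast\I{C})$ is obtained by applying $\iFun[\BB](-,f_\ast\I{C})$ to the unique map $\I{I}\to 1_\BB$, while $f_\ast(\diag_{f^\ast\I{I}})$ comes from $f^\ast\I{I}\to 1_\XX\simeq f^\ast 1_\BB$. Because $f^\ast$ preserves the terminal object and the equivalence of Step~1 is natural in its $\I{I}$-variable, transporting the former along the equivalence of Step~1 agrees with the latter. Thus the triangle in the statement commutes.

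\emph{Step 3: equivalence of admissibility of colimits and the formula.} If $\I{C}$ admits $f^\ast\I{I}$-indexed colimits in $\Cat(\XX)$, then $\diag_{f^\ast\I{I}}$ has a left adjoint $\colim_{f^\ast\I{I}}$; since $f_\ast$ is a $2$-functor (or just preserves internal mapping functors and the triangle identities), applying $f_\ast$ yields a left adjoint of $f_\ast(\diag_{f^\ast\I{I}})$, which by Step~2 is a left adjoint to $\diag_{\I{I}}$. Conversely, to go from a left adjoint of $\diag_{\I{I}}$ back to one of $\diag_{f^\ast\I{I}}$, I would use Proposition~\ref{prop:characterisationAdjunctions} after performing étale base change along every object $X\in\XX$: the observation is stable under replacing $f_\ast$ with $(f_X)_\ast\colon\XX_{/X}\to\BB_{/f_\ast X}$ (compare with the discussion preceding the statement) and section-wise at $X$ the statement reduces to the ordinary $\infty$-categorical fact that a diagonal $\I{C}(X)\to\Fun((f^\ast\I{I})(X),\I{C}(X))$ has a left adjoint iff the corresponding diagonal for $f_\ast\I{C}$ does, which ultimately comes down to the section-wise identification of Step~1. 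Once both sides admit the colimit, the identification of the colimit functor is forced by uniqueness of adjoints, and evaluation at $1_\BB\in\BB$ gives the canonical equivalence $\col\BB_{\I{I}}d\simeq\col\XX_{f^\ast\I{I}}\bar d$ via $\Gamma_\BB(f_\ast\I{C})=\Gamma_\XX(\I{C})$.

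The main obstacle is the careful handling of the converse direction in Step~3, since taking $f_\ast$ of an adjunction is formal but the reverse requires checking left adjointness inside $\Cat(\XX)$ from data living in $\Cat(\BB)$; the way around it is to rely on the section-wise criterion of Proposition~\ref{prop:characterisationAdjunctions} together with stability under étale base change, which makes the whole argument reduce to the already-known $\infty$-categorical version.
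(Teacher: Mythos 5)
Your Steps 1 and 2, the forward half of Step 3, and the passage to global sections are correct and amount to the argument the paper leaves implicit: the equivalence $\iFun(\I{I},f_\ast\I{C})\simeq f_\ast\iFun[\XX](f^\ast\I{I},\I{C})$ comes from the adjunction $f^\ast\dashv f_\ast$ on $\Cat(-)$ together with the fact that $f^\ast$ preserves finite products (note this is a result of the framework about the induced functor $\Cat(\BB)\to\Cat(\XX)$, not a formal consequence of being a left adjoint with left exact underlying functor), it intertwines the two diagonals by naturality in $\I{I}$, and applying the $2$-functorial right adjoint $f_\ast$ to the adjunction $\colim_{f^\ast\I{I}}\dashv\diag_{f^\ast\I{I}}$ produces the left adjoint of $\diag_{\I{I}}$ together with the stated formula. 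This direction, the identification of the colimit functors, and the global-sections consequence are all that the paper ever uses.

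The converse half of Step 3, however, contains a genuine gap, and in fact it cannot be repaired. From a left adjoint of $\diag_{\I{I}}\colon f_\ast\I{C}\to\iFun(\I{I},f_\ast\I{C})$, Proposition~\ref{prop:characterisationAdjunctions} only yields section-wise left adjoints of $\diag_{f^\ast\I{I}}$ over objects of the form $f^\ast A$ with $A\in\BB$, and mate conditions for maps coming from $\BB$; a left adjoint in $\Cat(\XX)$ requires this over \emph{every} $X\in\XX$. The reduction you propose, base changing along $(f_X)_\ast\colon\Over{\XX}{X}\to\Over{\BB}{f_\ast X}$, does not close this gap: the hypothesis is a statement about the $\BB$-category $f_\ast\I{C}$ and is not inherited along these (non-\'etale) base changes, and the ``ordinary $\infty$-categorical fact'' you invoke at the level of sections is not a fact. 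Indeed the ``only if'' clause fails in general: take $\BB=\SS$ and $\XX=\SS\times\SS$, so that $f^\ast$ is the diagonal and $f_\ast$ the product, let $\I{I}$ be any nonempty $\infty$-category and $\I{C}=(\emptyset,\DD)$ with $\DD$ lacking $\I{I}$-indexed colimits; then $f_\ast\I{C}\simeq\emptyset$ admits $\I{I}$-indexed colimits, while $\I{C}$ does not admit $f^\ast\I{I}=(\I{I},\I{I})$-indexed colimits, since its section over $(\emptyset,\ast)\in\XX$ is $\DD$ (and over this $X$ your base change lands in the trivial $\infty$-topos $\Over{\SS}{\emptyset}$, where the hypothesis carries no information). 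So this half of the Observation is an overstatement in the paper itself, harmless because it is never used; the correct move in your write-up is to prove the ``if'' direction together with the formula and the global-sections statement, and to drop the converse rather than argue for it by the sketched reduction.
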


Suppose now that $S$ is a bounded local class of morphisms in $\BB$ that is closed under finite limits in $\Fun(\Delta^1,\BB)$, and let $\iota\colon\Univ[S]\into \Univ$ be the associated (cofiltered) full subcategory. If $f_\ast\colon\XX\to\BB$ is a geometric morphism, we let $\iota^\prime\colon f^\ast(\Univ[S])\to \Univ[\XX]$ be the functor of $\XX$-categories that arises from transposing $\const_{ f_\ast(\Univ[\XX])}\iota\colon\Univ[S]\to f_\ast(\Univ[\XX])$ across the adjunction $f^\ast\dashv f_\ast$. By \cite[Example~3.2.10.9]{PresTop}, this is a cofiltered $\XX$-category, and the colimit of $\iota^\prime$ is the final object in $\Univ[\XX]$. Therefore, we are in the situation of \cite[Definition~3.2.10.5]{PresTop} and thus obtain an endofunctor $(-)^{\sh}_{\iota^\prime}\colon \Univ[\XX]\to\Univ[\XX]$ via $(-)^{\sh}_{\iota^\prime}=\colim_{\tau < \kappa} T^{\iota^\prime}_{\tau}$, where $\kappa$ is a suitable $\XX$-regular cardinal and where $T^{\iota^\prime}_{\bullet}\colon \kappa\to \Fun_{\XX}(\Univ[\XX], \Univ[\XX])$ is defined via transfinite induction by setting $T_0^{\iota^\prime} = \id$, by defining the map $T_\tau^{\iota^\prime}\to T_{\tau + 1}^{\iota^\prime}$ to be the morphism $\phi\colon T_\tau^{\iota^\prime}\to (T_\tau)^+_{\iota^\prime} = \colim_{f^\ast(\Univ[S])^\op}\map{\Univ[\XX]}(\iota^\prime(-), -)$ from \cite[Remark~3.2.10.4]{PresTop} and finally by setting $T_{\tau}^{\iota^\prime}= \colim_{\tau^\prime < \tau} T_{\tau^\prime}^{\iota^\prime}$ whenever $\tau$ is a limit ordinal. We will slightly abuse notation and also denote by $(-)^{\sh}_{\iota^\prime}$ the underlying endofunctor on $\XX$ that is obtained by passing to global sections. It will always be clear from the context which variant we refer to.

\begin{proposition}
	\label{prop:comparisonofcolimits}
	Consider a pullback square $ \mathcal{Q} $ in $ \RTopS $
	\[\begin{tikzcd}
		{\mathcal{Z}^\prime} & {\mathcal{X}} \\
		{\mathcal{Z}} & {\mathcal{B}}
		\arrow["{j^\prime_*}", from=1-1, to=1-2]
		\arrow["f_*", from=1-2, to=2-2]
		\arrow["{g_*}"', from=1-1, to=2-1]
		\arrow["j_*"', from=2-1, to=2-2]
	\end{tikzcd}\]
	where $ j_* $ (and therefore also $j_\ast^\prime$) is fully faithful.
	Let $S$ be a bounded local class of morphisms, closed under finite limits in $\Fun(\Delta^1,\BB)$, such that $j^*$ is the Bousfield localisation at $S$ (such a local class always exists by \cite[Proposition~3.2.10.14]{PresTop}), and let $\iota\colon \Univ[S]\into\Univ[\BB]$ be the associated full subcategory. Then we obtain an equivalence $j^\prime_\ast (j^\prime)^\ast\simeq (-)^{\sh}_{\iota^\prime}$, where $\iota^\prime\colon f^\ast(\Univ[S])\to \Univ[\XX]$ is the transpose of $\const_{ f_\ast(\Univ[\XX])}\iota$.
\end{proposition}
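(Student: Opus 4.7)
The strategy is to recognise $(j^\prime)^\ast$ as the Bousfield localisation of $\XX$ at the bounded local class classified by the full subcategory $\iota^\prime\colon f^\ast(\Univ[S]) \into \Univ[\XX]$, after which the formula from \cite[Proposition~3.2.10.14]{PresTop} immediately yields the claimed equivalence $j^\prime_\ast (j^\prime)^\ast \simeq (-)^{\sh}_{\iota^\prime}$.

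First, I would use the pullback universal property to pin down the localisation. A geometric morphism $p_\ast\colon \WW \to \XX$ factors through $(j^\prime)_\ast$ if and only if the composite $f_\ast p_\ast$ factors through $j_\ast$. Since $j^\ast$ is the Bousfield localisation at $S$, the latter is equivalent to $p^\ast f^\ast$ inverting all morphisms in $S$, and hence to $p^\ast$ inverting the class of morphisms in $\XX$ obtained from $S$ by applying $f^\ast$. This exhibits $(j^\prime)^\ast$ as a Bousfield localisation of $\XX$ at a local class $S^\prime$; boundedness and closure under finite limits in $\Fun(\Delta^1,\XX)$ of $S^\prime$ follow from the corresponding properties of $S$ together with the fact that $f^\ast$ preserves small colimits and finite limits.

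The key step is to verify that the full subcategory $\Univ[S^\prime] \into \Univ[\XX]$ classifying $S^\prime$ coincides with $\iota^\prime$. Unpacking the definition of $\iota^\prime$ as the transpose of $\const_{f_\ast \Univ[\XX]} \circ \iota$ under the adjunction $f^\ast \dashv f_\ast\colon \Cat(\XX) \leftrightarrows \Cat(\BB)$, a morphism $P \to A$ in $\XX$ whose classifying map $A \to \Univ[\XX]$ factors through $\iota^\prime$ corresponds, via that adjunction, to a morphism over $f_\ast(A)$ in $\BB$ that factors through $\iota$, i.e.\ lies in the local class $S$. Hence $\Univ[S^\prime]$ and $\iota^\prime$ classify the same local class of morphisms in $\XX$. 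With this identification in place, \cite[Proposition~3.2.10.14]{PresTop} directly produces the asserted equivalence. The main obstacle is precisely this identification; the difficulty lies in translating the abstract pullback characterisation of $\ZZ^\prime$ into the explicit adjunction-theoretic description of $\iota^\prime$, and requires a careful interplay between the correspondence of \cite[\S~3.9]{Yoneda} and the base change functor $f^\ast$ on internal categories.
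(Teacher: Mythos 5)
There is a genuine gap, and it sits exactly at the point you call ``the key step''. Your argument needs $\iota^\prime\colon f^\ast(\Univ[S])\to\Univ[\XX]$ to be a \emph{full subcategory} of $\Univ[\XX]$ classifying the local class $S^\prime$ at which $\ZZ^\prime$ is the Bousfield localisation, so that \cite[Proposition~3.2.10.14]{PresTop} can be applied verbatim. But $\iota^\prime$ is the composite $f^\ast(\Univ[S])\to f^\ast(\Univ[\BB])\to\Univ[\XX]$, where the second map (the transpose of $\const_{f_\ast\Univ[\XX]}$) is essentially the internal inverse-image functor and is not fully faithful in general; consequently there is no reason for $\iota^\prime$ to be a full subsheaf of $\Univ[\XX]$, nor for its sections over an object $A\in\XX$ to admit the description you give. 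Your transposition argument also does not go through as stated: the adjunction $f^\ast\dashv f_\ast$ on $\Cat(\BB)\leftrightarrows\Cat(\XX)$ identifies functors $f^\ast\I{C}\to\I{D}$ with functors $\I{C}\to f_\ast\I{D}$, but it does not let you unwind the value of the left Kan extension $f^\ast(\Univ[S])$ at an individual $A\in\XX$ in terms of ``morphisms over $f_\ast(A)$ in $\BB$ lying in $S$''; $A$ is not in the image of $f^\ast$, and $f^\ast$ on internal categories is a localisation/Kan-extension process whose sections cannot be computed pointwise in this way. So the identification $\Univ[S^\prime]\simeq f^\ast(\Univ[S])$, which is the entire content of the proposition, is asserted rather than proved — and it is false at the level of generality you work in: the most one gets from the pullback description is that $(j^\prime)^\ast$ is the localisation at the local class \emph{generated} by $f^\ast S$, which need not be classified by $f^\ast(\Univ[S])$.

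The paper's proof is organised precisely around this obstruction. The pointwise identification is carried out only when $f_\ast$ is locally contractible with left exact extra left adjoint $h_!$ (Lemma~\ref{lem:AlphaIsEquForLocWeaklyContr}): there $h^\ast(\Univ[S])$ is literally the sheaf $\Univ[S](h_!(-))$, full faithfulness of $\iota^\prime$ is supplied by \cite[Proposition~3.3.1.5]{PresTop}, and one can exhibit the essential image as an explicit local class $W$ (maps $s$ with $h_!(s)\in S$ and cartesian unit square) closed under finite limits, which coincides with the class generated by $h^\ast S$. The general case is then reduced to this one by factoring $f_\ast$ through the $\BB$-toposic cone (Proposition~\ref{prop:MappingConeFactorisation}) as a closed immersion $i_\ast$ followed by such a locally contractible morphism, and using that closed immersions are proper (Example~\ref{ex:closedimmersions}), hence compact (Lemma~\ref{lem:properImpliesCompact}), so that Lemma~\ref{lem:compactMorphismCommutesWithSheafification} lets one commute $i_\ast$ past the plus-construction and transport the formula back to $\XX$. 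To repair your proof you would either have to reproduce this factorisation argument or find an independent proof that $(-)^{\sh}_{\iota^\prime}$ (defined for the possibly non-fully-faithful $\iota^\prime$) computes the localisation at the class generated by $f^\ast S$; as written, citing \cite[Proposition~3.2.10.14]{PresTop} does not cover this situation.
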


\begin{remark}
	The above proposition can be thought of as an $\infty$-toposic version of \cite[Theorem C.3.3.14]{johnstone2002}.
\end{remark}

We first prove this proposition in a special case:

\begin{lemma}
	\label{lem:AlphaIsEquForLocWeaklyContr}
	Consider a pullback square $ \mathcal{Q} $ in $ \RTopS $
	\[\begin{tikzcd}
		{\mathcal{Z}^\prime} & {\mathcal{X}} \\
		{\mathcal{Z}} & {\mathcal{B}}
		\arrow["{j^\prime_*}", from=1-1, to=1-2]
		\arrow["h_*", from=1-2, to=2-2]
		\arrow["{h^\prime_*}"', from=1-1, to=2-1]
		\arrow["j_*"', from=2-1, to=2-2]
	\end{tikzcd}\]
	where $ j_* $ is fully faithful and $ h_* $ is locally contractible such that the additional left adjoint $ h_! $ of $ h^* $ preserves finite limits.
	Let $S$ and $\iota$ be as in Proposition \ref{prop:comparisonofcolimits}. 
	Then there is an equivalence $j^\prime_\ast (j^\prime)^\ast\simeq (-)^{\sh}_{\iota^\prime}$, where $\iota^\prime\colon h^\ast(\Univ[S])\to \Univ[\XX]$ is the transpose of $\const_{ h_\ast(\Univ[\XX])}\iota$.
\end{lemma}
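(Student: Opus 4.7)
The strategy is to identify $(j')^*\colon \XX \to \ZZ'$ as a Bousfield localization at an explicit bounded local class in $\XX$, and then invoke \cite[Proposition~3.2.10.14]{PresTop} while taking care that the indexing sheaf arising from that formula matches the one built from $\iota'$.

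First, since a pullback in $\RTopS$ corresponds to a pushout in $\LTopS$, the $\infty$-topos $\ZZ'$ is the pushout of the span $\XX \xleftarrow{h^*} \BB \xrightarrow{j^*} \ZZ$ in $\LTopS$. Unwinding this universal property shows that $(j')^*$ is the initial algebraic morphism out of $\XX$ inverting the class $h^*(S) := \{h^*(s) : s \in S\}$. Next, I would leverage the assumption that $h_!$ is left exact -- which promotes $h^*$ to the algebraic part of a geometric morphism and so makes it particularly compatible with slicing and pullbacks -- to show that the pullback-stable class $S' \subset \XX$ generated by $h^*(S)$ is a bounded local class closed under finite limits in $\Fun(\Delta^1, \XX)$, and that $(j')^*$ is the Bousfield localization at this $S'$. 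Applying \cite[Proposition~3.2.10.14]{PresTop} to $S'$ then yields $j'_*(j')^* \simeq (-)^{\sh}_{\tilde\iota}$, where $\tilde\iota\colon \Univ[S'] \into \Univ[\XX]$ is the associated full subsheaf. By construction $\iota'$ factors through $\tilde\iota$, since an $S$-morphism $B \to h_!X$ is sent by $\iota'$ to the pullback of $h^*(B \to h_!X)$ along the unit $X \to h^*h_!X$, which is tautologically an $S'$-morphism over $X$.

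To conclude $(-)^{\sh}_{\tilde\iota} \simeq (-)^{\sh}_{\iota'}$, it then suffices to show that the induced functor $h^*\Univ[S] \to \Univ[S']$ is suitably cofinal on opposites, so that the transfinitely iterated internal colimits defining the two formulas compute the same value for every $X \in \XX$. This is where the main obstacle lies: the cofinality statement requires that every element of $\Univ[S'](X)$ be, at least locally in $\XX$, of the form $\iota'(B \to h_!X)$ for some $S$-morphism $B \to h_!X$ in $\BB$. This is precisely what the left-exactness of $h_!$ delivers, as it guarantees that the pullback-along-unit construction used to define $\iota'$ exhausts, up to $\XX$-local equivalence, the local class $S'$ generated by $h^*(S)$; once the cofinality is in hand, both steps of the transfinite recursion $T^{(-)}_{\tau+1}$ agree for $\tilde\iota$ and $\iota'$, and a straightforward induction finishes the proof.
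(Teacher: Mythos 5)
Your overall skeleton is the same as the paper's: identify $(j')^*$ as the Bousfield localisation of $\XX$ at the local class generated by $h^*S$, and feed a bounded local class closed under finite limits into \cite[Proposition~3.2.10.14]{PresTop}. The problem is that the two claims on which your argument pivots are asserted rather than proved, and they are precisely the real content of the lemma. First, you claim that the local class $S'$ generated by $h^*(S)$ is bounded and closed under finite limits in $\Fun(\Delta^1,\XX)$ ``because $h_!$ is left exact''; there is no direct argument for this from the generation process, and the only natural proof goes through the explicit identification of $S'$ that you postpone to your last step. Second, that last step -- the ``exhaustion'' claim that every map in $S'$ is (locally) of the form $\iota'(B\to h_!X)$ -- is again attributed to left exactness of $h_!$ with no argument; and even granting it, local essential surjectivity of $h^*\Univ[S]\to\Univ[S']$ does not by itself yield the cofinality you invoke unless you also know that $\iota'$ is fully faithful, a point you never address (this is \cite[Proposition~3.3.1.5]{PresTop} and uses local contractibility, i.e.\ that $h^*$ on sheaves of $\infty$-categories is precomposition with $h_!$).

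The paper closes exactly these gaps, in a way that makes the cofinality detour unnecessary: since $h_\ast$ is locally contractible, $h^\ast(\Univ[S])$ is the sheaf $\Univ[S](h_!(-))$, and $\iota'$ is fully faithful with essential image the full subsheaf $\Univ[W]\subseteq\Univ[\XX]$, where $W$ is the class of maps $s\colon X\to Y$ such that $h_!(s)\in S$ and the naturality square of the unit $\id\to h^\ast h_!$ at $s$ is cartesian. Cocontinuity and left exactness of $h_!$ then show that $W$ is a local class, and the explicit description shows it is closed under finite limits (and bounded). Since $h^\ast S\subseteq W$ and every map in $W$ is by definition a pullback of a map in $h^\ast S$, one gets $W=\overline{h^\ast S}=S'$; hence $h^\ast\Univ[S]\to\Univ[S']$ is an equivalence and \cite[Proposition~3.2.10.14]{PresTop} applied to $W$ finishes the proof. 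If you establish this identification first, your steps concerning closure under finite limits and cofinality become immediate; as written, the proposal leaves exactly this identification unproven.
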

\begin{proof}
	By \cite[Proposition~3.3.1.5]{PresTop}, the functor $\iota^\prime$ is fully faithful, and since $h_\ast$ is locally contractible the $\XX$-category $h^\ast(\Univ[S])$ is given by the sheaf $\Univ[S](h_!(-))$. It follows that a map $s\colon X\to Y$ in $\XX$ defines an object of $h^\ast(\Univ[S])(Y)$ if and only if $ h_!(s) \in S $ and the square
	\[
	\begin{tikzcd}
		X & {h^* h_!X} \\
		Y & {h^* h_!Y}
		\arrow[from=2-1, to=2-2]
		\arrow[from=1-2, to=2-2]
		\arrow[from=1-1, to=2-1]
		\arrow[from=1-1, to=1-2]
	\end{tikzcd}
	\]
	is a pullback. Let $W$ be the class of maps in $\XX$ that satisfies these two conditions. Then, since $h_!$ is cocontinuous and preserves finite limits, it easily follows that $W$ is local. Hence we find $h^\ast(\Univ[S])= \Univ[W]$ as full subcategories of $\Univ[\XX]$. Moreover, by the explicit description of $W$, it is clear that $W$ is closed under finite limits in $\Fun(\Delta^1,\XX)$. Thus, by appealing to \cite[Proposition~3.2.10.14]{PresTop}, we only need to verify that $\ZZ^\prime$ is the Bousfield localisation of $\XX$ at $W$.
	We know from \cite[Remark~.3.2.10.15]{PresTop} that $ \ZZ^{\prime}\into \XX $ is obtained as the Bousfield localisation of $\XX$ at the smallest local class $ \overline{h^*S} $ that contains the image $h^\ast S$ of $ S$ along $h^\ast$. Since we clearly have $h^\ast S\subset W$, this immediately implies $W= \overline{f^\ast S}$, hence the claim follows.
\end{proof}

For our next lemma, we need to establish a bit of  notation.
If $p_\ast\colon \XX\to\BB$ is a geometric morphism, then the associated $\BB$-topos $p_\ast(\Univ[\XX])$ is \emph{cartesian closed} (see \cite[Proposition~3.1.3.7]{PresTop}).
This means that there is a functor $\ihom_{p_\ast(\Univ[\XX])}^\BB(-,-)\colon p_\ast(\Univ[\XX])^\op\times p_\ast(\Univ[\XX])\to p_\ast(\Univ[\XX])$ that fits into an equivalence
\begin{equation*}
	\map{p_\ast(\Univ[\XX])}(-\times -, -)\simeq \map{p_\ast(\Univ[\XX])}(-,\ihom_{p_\ast(\Univ[\XX])^\BB}(-,-)).
\end{equation*}
Here the superscript in $\ihom_{p_\ast(\Univ[\XX])}^\BB(-,-)$ indices that we are working internally in $\BB$.
\begin{lemma}
	\label{lem:compactMorphismCommutesWithSheafification}
	Let $p_\ast\colon \XX\to\BB$ be a compact geometric morphism and let $\iota\colon \I{I}\to\Univ$ be a functor where $\I{I}$ is cofiltered and where $\colim \iota$ is the final object. Let $\iota^\prime\colon p^\ast \I{I}\to\Univ[\XX]$ be the transpose of $\const_{ p_\ast(\Univ[\XX])}\iota\colon \I{I}\to p_\ast(\Univ[\XX])$. Then there is an equivalence $p_\ast (-)^{\sh}_{\iota^\prime}\simeq (-)^{\sh}_{\iota}p_\ast$.
\end{lemma}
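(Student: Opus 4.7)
The plan is to argue by transfinite induction on $\tau < \kappa$ that there is a natural equivalence $p_\ast T_\tau^{\iota^\prime} \simeq T_\tau^\iota p_\ast$ of functors $\XX\to\BB$. The desired equivalence $p_\ast (-)^{\sh}_{\iota^\prime} \simeq (-)^{\sh}_{\iota} p_\ast$ then follows by passing to the colimit over $\tau < \kappa$, which is a (constant) filtered colimit and is therefore preserved by $p_\ast$ by compactness. The limit-ordinal step in the induction itself reduces to the same observation about constant filtered colimits combined with the inductive hypothesis, and the base case $\tau = 0$ is tautological.

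The essential step is the successor. By definition, $(T_\tau^{\iota^\prime})^+(X)$ is the colimit in the $\XX$-category $\Univ[\XX]$, indexed by the $\XX$-category $(p^\ast \I{I})^\op$, of the diagram $\map{\Univ[\XX]}(\iota^\prime(-), T_\tau^{\iota^\prime}(X))$, where $\map{\Univ[\XX]}$ denotes the $\XX$-internal mapping sheaf, i.e.\ the exponential in $\XX$. By Observation~\ref{obs:transposingColimits}, applying $p_\ast$ identifies this with the $\I{I}^\op$-indexed colimit in the $\BB$-category $p_\ast\Univ[\XX]$ of the transposed diagram. Since $\I{I}$ is cofiltered, $\I{I}^\op$ is filtered, and the compactness of $p_\ast$---that is, the $\IFilt$-cocontinuity of the internal global sections functor $p_\ast\colon p_\ast\Univ[\XX]\to\Univ[\BB]$---allows us to commute $p_\ast$ past this colimit.

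It remains to identify the transposed and $p_\ast$-applied diagram with $\map{\Univ[\BB]}(\iota(-), T_\tau^\iota p_\ast(X))$. Since by construction $\iota^\prime$ is $p^\ast\circ \iota$ (viewed in $\Univ[\XX]$ via the unit) and the internal mapping sheaves are given by exponentials, this reduces to the natural isomorphism $p_\ast(Y^{p^\ast K}) \simeq (p_\ast Y)^K$ for $K\in\BB$ and $Y\in\XX$, which is a direct consequence of the $p^\ast\dashv p_\ast$ adjunction together with the fact that $p^\ast$ preserves finite products. Combined with the inductive hypothesis $p_\ast T_\tau^{\iota^\prime}\simeq T_\tau^\iota p_\ast$, this completes the successor step. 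The only real technical subtlety throughout is the bookkeeping between colimits internal to $\XX$ and those internal to $\BB$, which is precisely what Observation~\ref{obs:transposingColimits} is designed to manage; the main conceptual point is that the ``internal'' notion of compactness of $p_\ast$, rather than preservation of external filtered colimits (cf.~Warning~\ref{warning:CompactIsNotExternalColims}), is exactly what is needed to push the iterated colimit computing $(-)^{\sh}_{\iota^\prime}$ past $p_\ast$.
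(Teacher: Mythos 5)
Your proposal is correct and follows essentially the same route as the paper: reduce to the single plus-construction step (you just make the transfinite induction explicit), then combine Observation~\ref{obs:transposingColimits}, the $\IFilt$-cocontinuity of $\Gamma_{p_\ast\Univ[\XX]}$, and the identification $p_\ast(X^{p^\ast K})\simeq (p_\ast X)^K$ coming from the adjunction $\const_{p_\ast\Univ[\XX]}\dashv\Gamma_{p_\ast\Univ[\XX]}$ and cartesian closedness (the paper packages this via \cite[Remark~3.2.10.13]{PresTop}). The paper runs the same chain of equivalences starting from $(p_\ast X)^{+}_{\iota}$ rather than from $p_\ast X^{+}_{\iota^\prime}$, but the content is identical.
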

\begin{proof}
	Since $(-)^{\sh}_{\iota^\prime}$ and $(-)^{\sh}_{\iota}$ are obtained as filtered colimits of iterations of $(-)^{+}_{\iota^\prime}$ and $(-)^{+}_{\iota}$, respectively, and as $p_\ast$ commutes with filtered colimits, it suffices to produce an equivalence $p_\ast (-)^{+}_{\iota^\prime}\simeq (-)^{+}_{\iota}p_\ast$.
	Now for every $X\in\XX$, we have a natural chain of equivalences
	\begin{align*}
		(p_\ast X)^+_{\iota} &= \col {\BB}_{\I{I}^\op}\map{\Univ[\BB]}(\iota(-), \Gamma_{p_\ast(\Univ[\XX])}X)\\
		&\simeq \col {\BB}_{\I{I}^\op}\map{p_\ast(\Univ[\XX])}(\const_{ p_\ast(\Univ[\XX])}\iota(-), X) \\
		&\simeq \col {\BB}_{\I{I}^\op}\Gamma_{p_\ast(\Univ[\XX])}(\ihom_{p_\ast(\Univ[\XX])}^{\BB}(\const_{ p_\ast(\Univ[\XX])}\iota(-), X))\\
		&\simeq \Gamma_{p_\ast(\Univ[\XX])}(\col {\BB}_{\I{I}^\op}\ihom_{p_\ast(\Univ[\BB])}^{\BB}(\const_{ p_\ast(\Univ[\XX])}\iota(-), X))\\
		&\simeq \Gamma_{p_\ast(\Univ[\XX])}(\col {\XX}_{p^\ast\I{I}^\op} \map{p_\ast(\Univ[\XX])}(\iota^\prime(-), X))\\
		&\simeq p_\ast X^{+}_{\iota^\prime}
	\end{align*}
	where the third step follows from \cite[Remark~3.2.10.13]{PresTop}, the fourth step is a consequence of the fact that $\Gamma_{p_\ast(\Univ[\XX])}$ preserves filtered colimits and the fifth step follows from Observation \ref{obs:transposingColimits}. Hence the result follows.
\end{proof}

\begin{proof}[Proof of Proposition~\ref{prop:comparisonofcolimits}]
	Using Proposition~\ref{prop:MappingConeFactorisation}, we may factor the pullback square $ \QQ $ into two squares
	\[\begin{tikzcd}
		{\ZZ^\prime} & \XX \\
		{\ZZ^{\prime\prime}} & \YY \\
		\ZZ & \BB
		\arrow["{j_*}", from=3-1, to=3-2]
		\arrow[from=2-1, to=3-1]
		\arrow["{j^{\prime\prime}_*}"', from=2-1, to=2-2]
		\arrow["{i_*}"', from=1-2, to=2-2]
		\arrow[from=1-1, to=2-1]
		\arrow["{j^\prime_*}", from=1-1, to=1-2]
		\arrow["{h_*}"', from=2-2, to=3-2]
		\arrow["\lrcorner"{anchor=center, pos=0.125}, draw=none, from=2-1, to=3-2]
		\arrow["\lrcorner"{anchor=center, pos=0.125}, draw=none, from=1-1, to=2-2]
	\end{tikzcd}\]
	where $ h_* $ is as in \ref{lem:AlphaIsEquForLocWeaklyContr} and $ i_* $ is a closed immersion.
	By Lemma \ref{lem:AlphaIsEquForLocWeaklyContr}, we have an equivalence $j^{\prime\prime}_\ast (j^{\prime\prime})^\ast\simeq (-)^{\sh}_{\iota^{\prime\prime}}$, where $\iota^{\prime\prime}\colon h^\ast(\Univ[S])\to \Univ[\YY]$ is the transpose of $\const_{ h_\ast(\Univ[\YY])}\iota$.
	Furthermore, since $ i_* $ is a closed immersion and therefore proper (by Example \ref{ex:closedimmersions}), the upper square is horizontally left adjointable. Thus, we have an equivalence $ j^\prime_* (j^\prime)^* \simeq i^*j^{\prime\prime}_*(j^{\prime\prime})^*i_* $ and
	hence $j^\prime_* (j^\prime)^* \simeq i^*(-)^{\sh}_{\iota^{\prime\prime}} i_* $. Now as $i_\ast$ is proper and therefore compact by Lemma \ref{lem:properImpliesCompact}, we may apply Lemma \ref{lem:compactMorphismCommutesWithSheafification} to deduce $(-)^{\sh}_{\iota^{\prime\prime}}i_\ast\simeq i_{\ast}(-)^{\sh}_{\iota^\prime}$, which yields the claim.
\end{proof}

We are finally ready to prove Proposition \ref{prop:SpecialCaseOfBeckChevalley}:
\begin{proof}[{Proof of Proposition \ref{prop:SpecialCaseOfBeckChevalley}}]
	It suffices to construct a natural equivalence $ p_* j^{\prime}_* (j^{\prime})^*  \simeq j_* j^* p_*  $.
	We pick a local class $S$ in $\BB$, as in Proposition \ref{prop:comparisonofcolimits}, and we let $\iota\colon \Univ[S]\into\Univ[\BB]$ be the associated full subcategory. Furthermore, we let $\iota^\prime\colon p^\ast(\Univ[S])\to \Univ[\XX]$ be the transpose of $\const_{ p_\ast(\Univ[\XX])}\iota\colon \Univ[S]\to p_\ast(\Univ[\XX])$. We then have equivalences $ j_* j^* \simeq (-)^{\sh}_{\iota}$ (by \cite[Proposition~3.2.10.14]{PresTop}) and $ j^{\prime}_* (j^{\prime})^* \simeq (-)^{\sh}_{\iota^\prime}$ (by Proposition \ref{prop:comparisonofcolimits}). Hence the claim follows from Lemma \ref{lem:compactMorphismCommutesWithSheafification}.
\end{proof}

\subsection{The proof of Theorem~\ref{thm:CompactProperBaseChange}}
\label{sec:proof}
We now turn to the proof of the main theorem. We begin with the following small but useful observation:
\begin{lemma}
	\label{lem:leftAdjointabilityDeterminedByGlobalSections}
	Let
	\begin{equation*}
	\begin{tikzcd}
	\I{Q}\arrow[r, "g_\ast"]\arrow[d, "q_\ast"] & \I{P}\arrow[d, "p_\ast"]\\
	\I{Y}\arrow[r, "f_\ast"] & \I{X}
	\end{tikzcd}
	\end{equation*}
	be a commutative square in $\RTop(\BB)$. Then the mate transformation $\phi\colon f^\ast p_\ast \to g_\ast q^\ast$ is an equivalence if and only if it induces an equivalence on global sections.
\end{lemma}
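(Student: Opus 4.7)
The only-if direction is immediate from the functoriality of $\Gamma_\BB$. For the converse, the plan is to verify that $\phi$ is pointwise an equivalence in $\Cat(\BB)$, by reducing each component to the given global sections condition via the conservativity of a slice forgetful functor.

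Using Theorem~\ref{thm:TopoiAreRelativeTopoi}, I would identify each $\BB$-topos in the square with the underlying $\infty$-topos over $\BB$, so that for each $A\in\BB$, $\I{P}(A)$ and $\I{Y}(A)$ are slices of $\I{P}$ and $\I{Y}$ by the pullbacks of $A$ along the respective structure maps. The natural transformation $\phi$ is an equivalence in $\iFun(\I{P},\I{Y})$ precisely when $\phi(A)_Q$ is an equivalence in $\I{Y}(A)$ for every $A\in\BB$ and every $Q\in\I{P}(A)$. Since the slice forgetful $\pi_!\colon \I{Y}(A)\to \I{Y}(1_\BB)$ is conservative, it suffices to check this condition after applying $\pi_!$.

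The key step is then to establish an identity
\begin{equation*}
	\pi_!(\phi(A)_Q) \simeq \phi(1_\BB)_{\pi_! Q}
\end{equation*}
in $\I{Y}(1_\BB)$, where on each side $\pi_!$ denotes the appropriate slice forgetful. This should follow from two ingredients: each of the functors $f^\ast, p_\ast, g^\ast, q_\ast$, being a morphism of $\BB$-topoi over $\BB$, commutes with the slice forgetful functors; and the units and counits of the adjunctions used in the construction of the mate are natural with respect to these forgetfuls. Since $\phi(1_\BB) = \Gamma_\BB(\phi)$ is an equivalence by hypothesis, the identity combined with conservativity would then give that $\phi(A)_Q$ is an equivalence, as required. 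The main subtlety lies in verifying the displayed identity; this is a routine compatibility check whose essential input is that all four functors in the square live over $\BB$.
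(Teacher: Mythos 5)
Your reduction to checking $\phi(A)_Q$ for all $A\in\BB$ and $Q\in\I{P}(A)$ is fine, and the forgetful functor $\pi_!\colon\I{Y}(A)\to\I{Y}(1_\BB)$ is indeed conservative. The gap is the displayed identity $\pi_!(\phi(A)_Q)\simeq\phi(1_\BB)_{\pi_!Q}$: it is false, because the \emph{right} adjoints in the square do not commute with the slice forgetful functors. Identifying $\I{P}(A)\simeq\Over{\PP}{\pi_{\PP}^\ast A}$, the functor $p_\ast(A)$ sends $(Q\to\pi_{\PP}^\ast A)$ to the pullback $p_\ast Q\times_{p_\ast p^\ast\pi_{\XX}^\ast A}\pi_{\XX}^\ast A$, so $\pi_!\,p_\ast(A)(Q)$ differs from $p_\ast(1)\,\pi_!(Q)$ in general; only the inverse-image functors $f^\ast(A), g^\ast(A), q^\ast(A)$ are computed by simply applying the global inverse image, and hence commute with $\pi_!$. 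A minimal counterexample to the commutation already occurs over $\BB=\SS$ with $A$ a two-element set: then $\I{P}(A)\simeq\PP\times\PP$, $\pi_!(Q_1,Q_2)=Q_1\sqcup Q_2$, and $\pi_!\,p_\ast(A)(Q_1,Q_2)=p_\ast Q_1\sqcup p_\ast Q_2$ is not $p_\ast(Q_1\sqcup Q_2)$. The compatibility that does hold goes the other way: the right adjoints interact well with the étale pushforwards $(\pi_A)_\ast$, but those are not conservative, so neither half of your strategy can be run as stated.

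The repair is essentially the paper's argument: assemble the evaluated squares at $A$ and at $1_\BB$ into a cube whose side faces are the étale squares involving $(\pi_A)_\ast$, whose horizontal mates are equivalences. Functoriality of mates then shows that $\phi(A)$ is an equivalence on every object in the image of $\pi_A^\ast\colon\I{P}(1)\to\I{P}(A)$, using the hypothesis that $\phi(1)$ is an equivalence. One then extends from this image to all of $\I{P}(A)$ by observing that every object of a slice $\Over{\PP}{\pi^\ast A}$ is a pullback of objects pulled back from $\I{P}(1)$ (e.g.\ $(Q\to\pi^\ast A)$ is the pullback of $\pi_A^\ast Q\to\pi_A^\ast\pi^\ast A\leftarrow 1$), and that $\phi(A)$ is a natural transformation between left exact functors, so equivalences at the vertices of such a cospan propagate to the pullback. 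In other words, the conservativity input you want is replaced by a generation-under-finite-limits argument; some such step is unavoidable, since no conservative functor $\I{Y}(A)\to\I{Y}(1)$ commutes with all four functors in the mate.
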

\begin{proof}
	Since the condition is clearly necessary, it suffices to show that it is sufficient too. To that end, we need to show that for any object $A\in\BB$, the horizontal mate $\phi(A)$ of the back square in the commutative diagram
	\begin{equation*}
	\begin{tikzcd}[column sep={5em,between origins}, row sep={3em,between origins}]
	& \I{Q}(A)\arrow[rr, "g_\ast(A)"]\arrow[dd, "q_\ast(A)"', near end]\arrow[dl, "(\pi_A)_\ast"'] && \I{P}(A)\arrow[dl, "(\pi_A)_\ast"]\arrow[dd, "p_\ast(A)"]\\
	\I{Q}(1)\arrow[rr, crossing over, "g_\ast(1)", near end]\arrow[dd, "q_\ast(1)"'] && \I{P}(1)&\\
	&\I{Y}(A)\arrow[rr, "f_\ast(A)"', near start]\arrow[dl, "(\pi_A)_\ast"'] && \I{X}(A)\arrow[dl, "(\pi_A)_\ast"] \\
	\I{Y}(1)\arrow[rr, "f_\ast(1)"'] && \I{X}(1)\arrow[from=uu, "p_\ast(1)", crossing over, near start]
	\end{tikzcd}
	\end{equation*}
	is an equivalence, given that the mate $\phi(1)$ of the front square is one. But since the horizontal mate of both the left and the right square is an equivalence, it follows that $\phi(A)$ is an equivalence when evaluated at any object in the image of $\pi_A^\ast$. Since $\I{X}(A)$ is \'etale over $\I{X}(1)$, every object in $\I{X}(A)$ is a pullback of objects that are contained in the image of $\pi_A^\ast$. Therefore, the claim follows from the fact that $\phi(A)$ is a morphism of left exact functor.
\end{proof}

In order to prove Theorem~\ref{thm:CompactProperBaseChange}, we in particular need to show that compact morphisms are stable under pullback.
In fact it will suffice to prove this in a special case (see Corollary~\ref{cor:PresheafCategoriesCompact}), which we will turn to now.

\begin{lemma}
	\label{lem:pointwiseCompactness}
	Let $f_\ast\colon\XX\to\BB$ be a geometric morphism of $\infty$-topoi.
	Suppose we are given a commutative square
\[\begin{tikzcd}
	{\WW} & \XX \\
	\ZZ & \BB
	\arrow["{f_*}", from=1-2, to=2-2]
	\arrow["{p_*}"', from=2-1, to=2-2]
	\arrow["{g_*}"', from=1-1, to=2-1]
	\arrow["{q_*}", from=1-1, to=1-2]
\end{tikzcd}\]
	whose horizontal mate is an equivalence and such that $g_*$ is compact. Then, for every filtered $\BB$-category $\I{I}$, the functor $p^\ast\colon \BB \to \ZZ$ carries the horizontal mate of the commutative square
	\begin{equation*}
		\begin{tikzcd}
			\XX \arrow[r, "\diag"]\arrow[d, "f_\ast"] & \Fun_{\BB}(\I{I},f_* \Univ[\XX])\arrow[d, "(\Gamma_{f_\ast\Univ[\XX]})_\ast"]\\
			\BB \arrow[r, "\diag"] & \Fun_\BB(\I{I},\Univ[\BB])
		\end{tikzcd}
	\end{equation*}
	to an equivalence.
\end{lemma}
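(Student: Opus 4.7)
The plan is to evaluate the mate $\alpha$ at an arbitrary $D\in\Fun_\BB(\I{I},f_\ast\Univ[\XX])$ and show that both the source and target of $p^\ast\alpha(D)$ are canonically equivalent to $\colim_\I{J}^\ZZ(\Gamma_{g_\ast\Univ[\WW]}D^\prime)$, where $\I{J}:=p^\ast\I{I}$ is a filtered $\ZZ$-category and $D^\prime\colon \I{J}\to g_\ast\Univ[\WW]$ is an appropriate pullback of $D$. The key inputs are Observation~\ref{obs:transposingColimits}, the left-adjointability of the outer Cartesian square (providing $p^\ast f_\ast\simeq g_\ast q^\ast$ and dually $q^\ast f^\ast\simeq g^\ast p^\ast$), the cocontinuity of the algebraic morphisms $p^\ast$ and $q^\ast$ with respect to internally indexed colimits, and compactness of $g_\ast$.

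In more detail: explicitly, $\alpha(D)\colon \colim_\I{I}^\BB(\Gamma_{f_\ast\Univ[\XX]}\circ D)\to f_\ast\colim_\I{I}^\BB D$. For the right-hand side, left-adjointability and Observation~\ref{obs:transposingColimits} give $p^\ast f_\ast\colim_\I{I}^\BB D\simeq g_\ast q^\ast\colim_{f^\ast\I{I}}^\XX\bar D\simeq g_\ast\colim_{g^\ast\I{J}}^\WW\bar{D^\prime}$, where $\bar D\colon f^\ast\I{I}\to\Univ[\XX]$ is the transpose of $D$ and $\bar{D^\prime}\colon g^\ast\I{J}\to\Univ[\WW]$ is obtained from $q^\ast\bar D$ via the pullback identifications $q^\ast f^\ast\I{I}\simeq g^\ast p^\ast\I{I}$ and $q^\ast\Univ[\XX]\simeq\Univ[\WW]$. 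Since $\I{J}$ is filtered (algebraic morphisms preserving the internal class $\IFilt$), compactness of $g_\ast$ combined with Observation~\ref{obs:transposingColimits} applied to $g_\ast$ yields $g_\ast\colim_{g^\ast\I{J}}^\WW\bar{D^\prime}\simeq \colim_\I{J}^\ZZ(\Gamma_{g_\ast\Univ[\WW]}D^\prime)$, where $D^\prime\colon \I{J}\to g_\ast\Univ[\WW]$ is the transpose of $\bar{D^\prime}$. For the left-hand side, the analogous cocontinuity of $p^\ast$ gives $p^\ast\colim_\I{I}^\BB(\Gamma_{f_\ast\Univ[\XX]}\circ D)\simeq \colim_\I{J}^\ZZ(p^\ast\circ\Gamma_{f_\ast\Univ[\XX]}\circ D)$, and using the pullback identification $p^\ast\Univ[\BB]\simeq\Univ[\ZZ]$ and the naturality $p^\ast\Gamma_{f_\ast\Univ[\XX]}\circ D\simeq \Gamma_{g_\ast\Univ[\WW]}\circ D^\prime$, this also becomes $\colim_\I{J}^\ZZ(\Gamma_{g_\ast\Univ[\WW]}D^\prime)$. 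A naturality check confirms that $p^\ast\alpha(D)$ corresponds to the identity under these identifications and is therefore an equivalence.

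The main obstacle is justifying the various pullback-naturality statements on which the argument rests: $q^\ast\Univ[\XX]\simeq\Univ[\WW]$, $p^\ast\Univ[\BB]\simeq\Univ[\ZZ]$, the cocontinuity of the algebraic morphisms $p^\ast$ and $q^\ast$ with respect to internally indexed colimits, and $p^\ast\Gamma_{f_\ast\Univ[\XX]}\simeq\Gamma_{g_\ast\Univ[\WW]}$. Each of these follows from the universal characterisations of $\Univ$ and of the global sections functor, together with the functoriality of pullback in $\RTopS$ and the equivalence $\RTop(\BB)\simeq(\RTopS)_{/\BB}$ of Theorem~\ref{thm:TopoiAreRelativeTopoi}, but the bookkeeping has to be traced through carefully.
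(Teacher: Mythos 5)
Your overall strategy---apply $p^\ast$, transpose everything into colimits internal to $\ZZ$ indexed by the filtered $\ZZ$-category $\I{J}=p^\ast\I{I}$, and then invoke compactness of $g_\ast$---is in substance the route the paper takes, except that the paper organises the computation as a pasting of mates in a cube of $\infty$-topoi rather than an objectwise evaluation. However, as written your argument has two concrete problems. First, the ``pullback identifications'' $q^\ast\Univ[\XX]\simeq\Univ[\WW]$ and $p^\ast\Univ[\BB]\simeq\Univ[\ZZ]$ are false: the left adjoint $q^\ast\colon\Cat(\XX)\to\Cat(\WW)$ does not carry the universe to the universe (already for $\XX=\SS$ it produces the \emph{constant} $\WW$-category with value $\SS$). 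What does exist, and what your computation actually needs, is only the canonical algebraic morphism $\Univ[\XX]\to q_\ast\Univ[\WW]$ (resp.\ $\Univ[\BB]\to p_\ast\Univ[\ZZ]$); its internal cocontinuity together with Observation~\ref{obs:transposingColimits} yields $q^\ast\bigl(\colim^{\XX}_{f^\ast\I{I}}\bar D\bigr)\simeq\colim^{\WW}_{g^\ast\I{J}}\bar{D}'$, so this step is repairable, but the justification you propose (``universal characterisation of $\Univ$'') cannot deliver the equivalences you assert.

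Second, and more seriously, the step you call ``naturality'', namely $p^\ast\Gamma_{f_\ast\Univ[\XX]}\circ D\simeq\Gamma_{g_\ast\Univ[\WW]}\circ D'$, is an identification of \emph{diagrams} of $\ZZ$-categories $\I{J}\to\Univ[\ZZ]$; after transposing along $p^\ast\dashv p_\ast$ it amounts to the mate of the corresponding square of $\BB$-topoi being invertible \emph{internally}, whereas the hypothesis only gives invertibility after passing to global sections. This upgrade is not a formal consequence of the functoriality of pullbacks or of Theorem~\ref{thm:TopoiAreRelativeTopoi}; it is precisely the content of Lemma~\ref{lem:leftAdjointabilityDeterminedByGlobalSections} (proved via an \'etale descent argument and left exactness), which the paper invokes at exactly this point and which you never cite. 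Relatedly, your closing sentence ``a naturality check confirms that $p^\ast\alpha(D)$ corresponds to the identity'' is where the real work lies: exhibiting abstract equivalences between the source and target of $p^\ast\alpha(D)$ does not show that the mate map itself becomes invertible, and verifying that your chain of identifications is compatible with the mate is exactly what the paper's functoriality-of-mates pasting handles cleanly. With Lemma~\ref{lem:leftAdjointabilityDeterminedByGlobalSections} supplied and the universe ``identifications'' replaced by the canonical algebraic morphisms, your outline can be completed, but as it stands these points are genuine gaps.
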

\begin{proof}
	Note that we have a commutative diagram of $ \infty $-topoi
\[\begin{tikzcd}
	& {\Fun_\BB(\I{I}, f_*\Univ[\XX])} && {\Fun_\BB(\I{I}, p_* g_*\Univ[\WW])} \\
	\XX && \WW \\
	& {\Fun_\BB(\I{I}, \Univ[\BB])} && {\Fun_\BB(\I{I}, p_* \Univ[\ZZ])} \\
	\BB && \ZZ
	\arrow["{f_*}"', from=2-1, to=4-1]
	\arrow["{g_*}"{pos=0.3}, from=2-3, to=4-3]
	\arrow["{p_*}", from=4-3, to=4-1]
	\arrow["{(p_*)_*}"{pos=0.3}, from=3-4, to=3-2]
	\arrow["{(q_*)_*}"', from=1-4, to=1-2]
	\arrow["{(f_*)_*}"'{pos=0.7}, from=1-2, to=3-2]
	\arrow["{q_*}"'{pos=0.4}, from=2-3, to=2-1, crossing over]
	\arrow["{(g_*)_*}", from=1-4, to=3-4, crossing over]
	\arrow["\diag"{description}, from=2-1, to=1-2]
	\arrow["\diag"{description}, from=4-1, to=3-2]
	\arrow["\diag"{description}, from=4-3, to=3-4]
	\arrow["\diag"{description}, from=2-3, to=1-4]
\end{tikzcd}\]
	where the horizontal mates of the front and the back square are invertible (the latter using Lemma~\ref{lem:leftAdjointabilityDeterminedByGlobalSections}).
	Furthermore, the adjunction $ p^* \dashv p_* $ allows us to identify the right square with
	\[\begin{tikzcd}
		\WW & {\Fun_{\ZZ}(p^* \I{I}, g_* \Univ[\WW])} \\
		\ZZ & {\Fun_{\ZZ}(p^* \I{I}, \Univ[\ZZ])}
		\arrow["(g_\ast)_\ast", from=1-2, to=2-2]
		\arrow["\diag", from=2-1, to=2-2]
		\arrow["g_\ast", from=1-1, to=2-1]
		\arrow["\diag", from=1-1, to=1-2]
	\end{tikzcd}\]
	whose horizontal mate is invertible since $ g_* $ was assumed to be compact and $ p^* \I{I} $ is filtered.
	Therefore, the functoriality of mates implies that the functor $p^\ast\colon \BB \to \ZZ $ carries the horizontal mate of the left square to the mate of the right square, so an equivalence.
\end{proof}

As a consequence of Lemma~\ref{lem:pointwiseCompactness}, we obtain that compactness can be checked locally on the base in the following strong sense:

\begin{proposition}
	\label{prop:ProperIsH-Local}
	Let $f_\ast\colon\XX\to\BB$ be a geometric morphism of $\infty$-topoi.
	Assume that there exists a family of commutative squares
	\[\begin{tikzcd}
		{\WW_i} & \XX \\
		\ZZ_i & \BB
		\arrow[from=1-2, to=2-2,"f_*"]
		\arrow[from=2-1, to=2-2,"p^i_*"]
		\arrow[from=1-1, to=2-1,"g^i_*"]
		\arrow[from=1-1, to=1-2]
	\end{tikzcd}\]
	whose mate is an equivalence such that the $ (p^i)^* $ are jointly conservative and each $ g^i_* $ is compact.
	Then $ f_* $ is compact.
\end{proposition}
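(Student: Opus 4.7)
The goal is to verify that $f_\ast$ is compact in the sense of Definition~\ref{def:compactBTopos}, i.e., that the internal global sections functor $\Gamma_{\I{X}}\colon\I{X}\to\Univ[\BB]$ of the $\BB$-topos $\I{X}=f_\ast\Univ[\XX]$ is $\IFilt$-cocontinuous. By Example~\ref{ex:twoExOfInternalClasses} and the sheaf-theoretic definition of $\IFilt$-cocontinuity, this amounts to showing that for every $A\in\BB$ and every filtered $\Over{\BB}{A}$-category $\I{I}$, the mate of the square obtained by base changing $f_\ast$ along $\pi_A\colon\Over{\BB}{A}\to\BB$ is an equivalence of functors of $\infty$-categories.

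First I would reduce to the case $A=1$ by verifying that all hypotheses of the proposition are stable under étale base change. Denote by $(q^i)_\ast\colon\ZZ_i\times_{\BB}\Over{\BB}{A}\to\Over{\BB}{A}$ and $(r^i)_\ast\colon\WW_i\times_{\BB}\Over{\BB}{A}\to\ZZ_i\times_{\BB}\Over{\BB}{A}$ the base changes of $p^i_\ast$ and $g^i_\ast$. The functors $(q^i)^\ast$ remain jointly conservative: if $s$ is a morphism in $\Over{\BB}{A}$ with $(q^i)^\ast s$ invertible for all $i$, then the Beck--Chevalley equivalence for the étale base change square yields $(p^i)^\ast\pi_A^!(s)$ invertible for all $i$, so by the original joint conservativity $\pi_A^!(s)$ is invertible, whence $s$ is invertible as $\pi_A^!$ is conservative. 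Each $(r^i)_\ast$ remains compact, because $\IFilt$-cocontinuity of the internal global sections functor is by construction preserved under passing to slices. Finally, the base-changed outer squares are still horizontally left adjointable by the naturality of mate formation under whiskering. After this reduction we may assume $A=1$ and treat $\I{I}$ as an arbitrary filtered $\BB$-category.

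At this point the argument concludes quickly: Lemma~\ref{lem:pointwiseCompactness} applied to each given commutative square shows that $(p^i)^\ast\colon\BB\to\ZZ_i$ sends the relevant mate transformation to an equivalence. Since the $(p^i)^\ast$ are jointly conservative by hypothesis, this mate transformation is itself an equivalence, which is precisely the conclusion. I expect the main point of care to be the stability of compactness under étale base change in the first step; while this is intuitive from the sheaf-theoretic perspective, it must be unpacked from the definitions since compactness is a condition about internal filtered colimits rather than external ones.
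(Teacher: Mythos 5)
Your argument is correct and follows essentially the same route as the paper: check that joint conservativity, compactness and left adjointability of the given squares survive \'etale base change along $\pi_A$ so that one may work over each slice, and then conclude from Lemma~\ref{lem:pointwiseCompactness} together with the joint conservativity of the $(p^i)^*$. The only cosmetic difference is that the paper upgrades the global-sections mate to the internal one via Lemma~\ref{lem:leftAdjointabilityDeterminedByGlobalSections} for a fixed filtered $\BB$-category, whereas you obtain the internal statement by quantifying the global-sections statement over all slices; both bookkeepings work.
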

\begin{proof}
	First, let us verify that for any filtered $ \BB $-category $ \I{I} $ the mate of the commutative square
	\[
	\begin{tikzcd}
		f_\ast\Univ[\XX]\arrow[r, "\diag"]\arrow[d, "f_\ast"] & \iFun[\BB](\I{I}, f_\ast\Univ[\XX])\arrow[d, "(\Gamma_{f_\ast\Univ[\XX]})_\ast"]\\
		\Univ[\BB]\arrow[r, "\diag"] & \iFun[\BB](\I{I}, \Univ[\BB])
	\end{tikzcd}
	\]
	is an equivalence.
	By Lemma~\ref{lem:leftAdjointabilityDeterminedByGlobalSections} it suffices to see this on global sections.
	Our assumptions guarantee that we can check that the mate is an equivalence after applying $ (p^i)^* \colon \BB \to \ZZ_i$ for every $ i $.
	But then the claim follows from Lemma~\ref{lem:pointwiseCompactness}.
	Now if $ A \in \BB $ and $ \I{I} $ is a filtered $ \BB_{/A} $-category, we observe that the family of squares obtained by pulling back along $ (\pi_A)_\ast\colon\BB_{/A} \to \BB $ again satisfy the assumptions of the proposition.
	Thus we can replace $ \BB $ by $ \BB_{/A} $ in the first part of the proof and the result follows.
\end{proof}

\begin{corollary}
	\label{cor:PresheafCategoriesCompact}
	Let $ p_\ast \colon \XX \to \BB $ be a compact geometric morphism and let $\I{C} $ be a $ \BB $-category.
	Then the geometric morphism $ (\Gamma_{p_\ast\Univ[\XX]})_\ast\colon\Fun_{\BB}(\I{C}, p_* \Univ[\XX]) \to \Fun_{\BB}(\I{C},\Univ[\BB]) $ is again compact.
\end{corollary}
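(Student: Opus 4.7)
The plan is to apply Proposition~\ref{prop:ProperIsH-Local} to the geometric morphism $(\Gamma_{p_\ast\Univ[\XX]})_\ast$. For each $A\in\BB$ and each morphism of $\BB$-categories $c\colon A \to \I{C}$, I would consider the commutative square in $\RTopS$
\[
\begin{tikzcd}
\Over{\XX}{p^\ast A} \arrow[r] \arrow[d, "{(p_A)_\ast}"'] & \Fun_\BB(\I{C}, p_\ast\Univ[\XX]) \arrow[d, "{(\Gamma_{p_\ast\Univ[\XX]})_\ast}"] \\
\Over{\BB}{A} \arrow[r] & \Fun_\BB(\I{C}, \Univ[\BB])
\end{tikzcd}
\]
in which the horizontal arrows are the geometric morphisms whose inverse images are the evaluation-at-$c$ functors, and the left vertical $(p_A)_\ast$ is the \'etale base change of $p_\ast$ along $(\pi_A)_\ast\colon \Over{\BB}{A}\to\BB$.

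The main task is to verify that each such square is horizontally left adjointable. This reduces to the following essentially tautological observation: by construction of the $\BB$-topos $p_\ast\Univ[\XX]$ and its structural morphism, $\Gamma_{p_\ast\Univ[\XX]}\colon p_\ast\Univ[\XX]\to\Univ[\BB]$ is the natural transformation of sheaves of $\infty$-categories whose $B$-component is precisely the geometric morphism $(p_B)_\ast\colon\Over{\XX}{p^\ast B}\to\Over{\BB}{B}$. Consequently, for any functor $F\colon \I{C}\to p_\ast\Univ[\XX]$ of $\BB$-categories, both composites of the mate square classify the same object $(p_A)_\ast(F\circ c)\in\Over{\BB}{A}$, so the mate transformation is an equivalence.

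I would then check the remaining hypotheses of Proposition~\ref{prop:ProperIsH-Local}. Joint conservativity of the evaluation functors, as $(A,c)$ ranges, amounts to the standard fact that a natural transformation of sheaves of $\infty$-categories $\I{C}\to\Univ[\BB]$ is an equivalence iff its value at every $A$-valued point of $\I{C}$ is invertible in $\Over{\BB}{A}$. Compactness of each $(p_A)_\ast$ is inherited from compactness of $p_\ast$: the associated $\Over{\BB}{A}$-topos is $\pi_A^\ast p_\ast\Univ[\XX]$, whose structural morphism is the \'etale restriction of $\Gamma_{p_\ast\Univ[\XX]}$, and because $\IFilt$ is an internal class and $\IFilt$-cocontinuity is by definition checkable locally on the base, the property descends along $\pi_A^\ast$. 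Proposition~\ref{prop:ProperIsH-Local} then delivers the corollary.

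The main obstacle to anticipate is not conceptual but notational: one must navigate carefully between the description of $\BB$-topoi as limit-preserving sheaves of $\infty$-categories on $\BB$ and their description as $\infty$-topoi over $\BB$, in particular identifying $\Gamma_{p_\ast\Univ[\XX]}(B)$ with the \'etale pullback $(p_B)_\ast$. This identification is already built into the inverse of the equivalence of Theorem~\ref{thm:TopoiAreRelativeTopoi}, so once it is invoked every other step of the plan is formal.
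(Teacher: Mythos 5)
Your proposal is correct and is essentially the paper's own argument: the paper likewise reduces to Proposition~\ref{prop:ProperIsH-Local}, using the single square attached to the core inclusion $\I{C}^{\core}\into\I{C}$ (through which every point $c\colon A\to\I{C}$ factors), with conservativity of restriction along the core and compactness of the left vertical $\XX_{/p^\ast(\I{C}^{\core})}\to\BB_{/\I{C}^{\core}}$ obtained, exactly as you argue, as an \'etale base change of the compact morphism $p_\ast$. The only cosmetic differences are that you run the verification over the whole family of $A$-points instead of the one generic point, and that the mate condition should be justified by identifying the mate transformation $c^\ast(\Gamma_{p_\ast\Univ[\XX]})_\ast\to (p_A)_\ast (c')^\ast$ itself via the usual mate calculus (restriction along $c$ commutes with postcomposition by $\Gamma_{p_\ast\Univ[\XX]}$, and the mate of this identification is again an identification), rather than merely observing that the two composite right adjoints classify the same object.
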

\begin{proof}
	The core inclusion $\iota\colon\I{C}^\core\into\I{C}$ gives rise to a geometric morphism $\iota_\ast\colon \Over{\BB}{\I{C}^\core}\simeq\Fun_{\BB}(\I{C}^{\core},\Univ[\BB])\to\Fun_{\BB}(\I{C},\Univ[\BB])$ whose left adjoint
	is given by restriction along  $\iota$ and is therefore conservative (which is easily seen using Proposition~\ref{prop:straightening} together with~\cite[Proposition~4.1.18]{Yoneda}).
	Since in the commutative diagram
	\[\begin{tikzcd}
		{\XX_{/p^*(\I{C}^\simeq)}} & { \Fun_{\BB}(\I{C},p_* \Univ[\XX])} & \XX \\
		{\BB_{/\I{C}^\simeq}} & { \Fun_{\BB}(\I{C},\Univ[\BB])} & \BB
		\arrow[from=2-2, to=2-3, "\lim"]
		\arrow["{p_*}"', from=1-3, to=2-3]
		\arrow[from=1-2, to=1-3, "\lim"]
		\arrow[from=1-1, to=2-1]
		\arrow[from=2-1, to=2-2]
		\arrow[from=1-1, to=1-2]
		\arrow[from=1-2, to=2-2]
	\end{tikzcd}\]
	both squares are pullbacks (the one on the right by~\cite[Example~3.2.7.5]{PresTop}), it follows that the left vertical morphism is compact as an \'etale base change of a compact morphism.
	As a consequence, the left square satifies the assumptions of Proposition~\ref{prop:ProperIsH-Local}, which immediately yields the claim.
\end{proof}

\begin{proof}[Proof of Theorem~\ref{thm:CompactProperBaseChange}]
	Suppose that $ p_* \colon \XX \to \BB $ is a compact geometric morphism.
	First, we show that for any pullback square
	\[\begin{tikzcd}
		\ZZ' & \XX \\
		\ZZ & \BB
		\arrow["{p_*}", from=1-2, to=2-2]
		\arrow["{f_*}", from=2-1, to=2-2]
		\arrow["{g_*}", from=1-1, to=1-2]
		\arrow["{q_*}"', from=1-1, to=2-1]
	\end{tikzcd}\]
	in $ \RTopS $ the mate natural transformation $ q_* g^* \to  f^* p_* $ is invertible.
	To see this, we factor the above square as
	\[\begin{tikzcd}
		\ZZ' & {\Fun_\BB(\I{C}^\op,p_\ast\Univ[\XX])} & \XX \\
		\ZZ & {\Fun_\BB(\I{C}^\op,\Univ_\BB)} & \BB.
		\arrow["{p_*}"', from=1-3, to=2-3]
		\arrow["{(p_*)_\ast}", from=1-2, to=2-2]
		\arrow["{j_*}", from=2-1, to=2-2]
		\arrow["{\lim_{\I{C}^\op}}", from=1-2, to=1-3]
		\arrow["{j'_*}", from=1-1, to=1-2]
		\arrow["{q_*}", from=1-1, to=2-1]
		\arrow["{\lim_{\I{C}^\op}}", from=2-2, to=2-3]
	\end{tikzcd}\]
	(using again~\cite[Example~3.2.7.5]{PresTop}).
	It is clear that the mate of the right square is an equivalence, hence it suffices to show the claim for the left square.
	In other words, by Corollary~\ref{cor:PresheafCategoriesCompact} we may reduce to the case where $f_\ast$ is already fully faithful, which follows from Proposition~\ref{prop:SpecialCaseOfBeckChevalley}. 
	
	To complete the proof, we now have to show that given a second pullback
	\[\begin{tikzcd}
		{\WW'} & \ZZ' & \XX \\
		\WW & \ZZ & \BB
		\arrow["{p_*}", from=1-3, to=2-3]
		\arrow["{f_*}", from=2-2, to=2-3]
		\arrow["{g_*}", from=1-2, to=1-3]
		\arrow["{q_*}"', from=1-2, to=2-2]
		\arrow["{r_*}", from=2-1, to=2-2]
		\arrow["{\bar{q}_*}", from=1-1, to=2-1]
		\arrow["{s_*}", from=1-1, to=1-2]
	\end{tikzcd}\]
	in $ \RTopS $ the mate of the left square is an equivalence.
	For this we again use the factorisation from above and consider the diagram
	\[\begin{tikzcd}
		{\WW'} & \ZZ' & {\Fun_\BB(\I{C}^\op,p_\ast\Univ[\XX])} \\
		\WW & \ZZ & {\Fun_\BB(\I{C}^\op,\Univ_\BB)}
		\arrow["{(p_*)_\ast}", from=1-3, to=2-3]
		\arrow["{j_*}", from=2-2, to=2-3]
		\arrow["{j'_*}", from=1-2, to=1-3]
		\arrow["{q_*}"', from=1-2, to=2-2]
		\arrow["{r_*}", from=2-1, to=2-2]
		\arrow["{q^\prime_*}", from=1-1, to=2-1]
		\arrow["{s_*}", from=1-1, to=1-2]
	\end{tikzcd}\]
	By Corollary~\ref{cor:PresheafCategoriesCompact} the geometric morphism $ (p_*)_\ast $ is compact. Together with what we have already shown so far, this implies that both the outer square and the right square is left adjointable.
	As $ j'_* $ is fully faithful it now immediately follows that the left square is also left adjointable, as desired.
\end{proof}

\subsection{Application: proper maps in topology}
\label{sec:Topology}
Recall that a map $p\colon Y\to X$ of topological spaces is called \emph{proper} if it is universally closed, and $p$ is called \emph{separated} if the diagonal $Y\to Y\times_X Y$ is a closed embedding. These are the relative versions of compactness and of being Hausdorff, respectively. Our main goal in this section is to prove the following result about proper separated maps:
\begin{theorem}
	\label{thm:properMapsInTopology}
	Let $p\colon Y\to X$ be a proper and separated map of topological spaces. Then the induced geometric morphism $p_\ast\colon\Shv(Y)\to\Shv(X)$ is proper.
\end{theorem}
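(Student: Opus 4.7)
The plan is to use Theorem~\ref{thm:CompactProperBaseChange} to reduce the claim to showing that $p_\ast$ is \emph{compact}, and then to deduce compactness from the compact Hausdorff case via Proposition~\ref{prop:ProperIsH-Local}. The key geometric input is that, since $p$ is proper and separated, every fibre $Y_x = p^{-1}(x)$ is compact Hausdorff; hence by Lurie's \cite[Theorem~7.3.1.16]{htt}, the global sections morphism $\Shv(Y_x)\to\SS$ is proper, and therefore compact by Theorem~\ref{thm:CompactProperBaseChange}.

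First I would reduce to a local situation on $X$: for any open cover $\{U_i\}$ of $X$, the étale morphisms $\Shv(U_i)\to\Shv(X)$ form a jointly conservative family with automatically invertible mates, and the base change of $p_\ast$ along $\Shv(U_i)\to\Shv(X)$ identifies with the restriction $\Shv(p^{-1}(U_i))\to\Shv(U_i)$. By Proposition~\ref{prop:ProperIsH-Local}, it suffices to verify compactness of each such restriction, so we are free to shrink $X$. Next, in order to access the compact Hausdorff fibres, I would enlarge the family to include geometric morphisms $\SS \to \Shv(X)$ coming from points of $X$; the pullback of $p_\ast$ along such a morphism ought to identify with $\Shv(Y_x)\to\SS$, which is compact by Lurie's result applied to $Y_x$. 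Combining open embeddings with stalk morphisms, one then hopes to produce a family that is jointly conservative and along which $p_\ast$ base-changes to a compact morphism, so that Proposition~\ref{prop:ProperIsH-Local} concludes.

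The main obstacle is to make the last step work without circularity. Proper base change at a point is essentially what we are trying to prove, so the identification of the stalk topos $\Shv(Y)\times_{\Shv(X)}\SS$ with $\Shv(Y_x)$ must be established by hand, using only formal properties of $\infty$-topoi together with the compact Hausdorff structure of the fibre and classical proper base change at the level of $1$-topoi (the latter upgrading to $\infty$-sheaves via hypercompleteness of $\Shv(Y_x)$, which is automatic for compact Hausdorff $Y_x$). A further subtlety is that on a general topological space the family of stalks is typically not jointly conservative for $\infty$-sheaves; one therefore needs to combine stalk-like morphisms with enough open immersions, or to make a preliminary reduction to a sufficiently nice class of base spaces, in order to obtain a genuinely conservative family. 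Navigating these issues---while carefully translating between internal filteredness in $\Shv(X)$ and ordinary filteredness of the resulting diagrams over fibres---is the crux of the argument.
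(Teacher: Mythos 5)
Your first step---reducing properness of $p_\ast$ to compactness via Theorem~\ref{thm:CompactProperBaseChange}---is exactly how the paper proceeds. But the route you propose for establishing compactness has a genuine gap that you yourself flag and do not close, and it is not a minor technicality: to apply Proposition~\ref{prop:ProperIsH-Local} with stalk morphisms $\SS\to\Shv(X)$ you need (a) the mate of each square at a point to be invertible together with an identification $\Shv(Y)\times_{\Shv(X)}\SS\simeq\Shv(Y_x)$, and (b) joint conservativity of the chosen family. Point (a) is essentially the proper base change statement you are trying to prove (the fibre identification and the invertibility of the mate at a point is the hard content; Lurie's proof via $\mathcal{K}$-sheaves spends most of its effort exactly here), and you give no independent argument for it---the appeal to ``classical $1$-toposic proper base change plus hypercompleteness of $\Shv(Y_x)$'' does not produce the required statement about the $\infty$-topos pullback over an arbitrary base $X$. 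Point (b) fails for general $X$: the non-hypercomplete $\infty$-topos $\Shv(X)$ need not have enough points, and adding the open immersions $\Shv(U_i)\to\Shv(X)$ does not help, because for those squares the hypothesis of Proposition~\ref{prop:ProperIsH-Local} is that the restricted morphism $\Shv(p^{-1}(U_i))\to\Shv(U_i)$ is already compact---which is just the original problem again, since there is no local triviality to exploit. (Compare Example~\ref{ex:EtaleCohomology}, where this style of argument does work, but only because enough points and base change at the points are both available as independent inputs; neither is available here.)

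The paper's proof avoids fibres and points altogether: it considers the internal locale $\I{O}_X(Y)$ in $\Shv(X)$ given by $U\mapsto\OO(p^{-1}(U))$, and internalises the classical argument that the frame of opens of a compact Hausdorff space is stably compact (a retract of a coherent locale, built from opens with compact closure). By the internal version of Johnstone's theorem (\cite[Proposition~3.4.8.6]{PresTop}), properness plus separatedness of $p$ makes $\I{O}_X(Y)$ a stably compact $\Shv(X)$-locale; Example~\ref{ex:stably_compact_locales} then shows the associated $\Shv(X)$-topos of sheaves is compact, and since it recovers $p_\ast$ on global sections, Theorem~\ref{thm:CompactProperBaseChange} concludes. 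If you want to salvage your fibrewise strategy, you would have to supply the point-wise base change and conservativity inputs by hand, which in effect reproduces Lurie's argument and its completely regular hypothesis rather than the generalisation at stake.
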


\begin{remark}
    A continuous map $p \colon Y \to X$ is separated as soon as $Y$ is Hausdorff.
    Since any completely regular topological space is Hausdorff, it follows that Theorem~\ref{thm:properMapsInTopology} includes \cite[Theorem 7.3.1.6]{htt}.
\end{remark}

\begin{example}
    \label{ex:SeparatedNecessary}
    It follows from \cite[Example C.3.4.1]{johnstone2002} that the separatedness assumption in Theorem~\ref{thm:properMapsInTopology} cannot be dropped.
    We briefly recall the example for the convenience of the reader.
    Consider the topological space $Y$ that is given by taking two copies of the interval $[0,1]$ and identifying both copies of $x$ for $0<x<1$.
    Then $Y$ is compact, but $\Shv(Y)$ is not.
    Indeed, consider the sequence that takes $n\in\mathbb N$ to the sheaf represented by the map $Y_n\to Y$ in which $Y_n$ is given by two copies of $[0,1]$ where we identify both copies of $x$ for $ 2^{-n} < x < 1 - 2^{-n}$.
    We note that all the maps $Y_n \to Y_{n+1}$ and $Y_n \to Y$ are local homeomorphisms, which implies that the colimit of the sheaves represented by $(Y_n\to Y)_{n\in\mathbb N}$ is the sheaf represented by $ \colim_n Y_n = Y$.
    In particular, we have $\Gamma_Y(\colim_n Y_n) = 1$, but since $\colim_n \Gamma_Y (Y_n) = \varnothing$,  the global sections functor $\Gamma_Y$ does not commute with filtered colimits.
\end{example}

Before we prove Theorem~\ref{thm:properMapsInTopology}, let us record that it implies the proper base change theorem in topology, at least for sober spaces:
\begin{corollary}
	\label{cor:properBaseChangeTopology}
	For every pullback square
	\begin{equation*}
		\begin{tikzcd}
			Q\arrow[d, "q"]\arrow[r, "g"] & P\arrow[d, "p"]\\
			Y\arrow[r, "f"] & X
		\end{tikzcd}
	\end{equation*}
	of sober topological spaces in which $p$ is proper and separated, the induced commutative square
	\begin{equation*}
	\begin{tikzcd}
	\Shv(Q)\arrow[d, "q_\ast"]\arrow[r, "g_\ast"] & \Shv(P)\arrow[d, "p_\ast"]\\
	\Shv(Y)\arrow[r, "f_\ast"] & \Shv(X)
	\end{tikzcd}
	\end{equation*}
	is horizontally left adjointable.
\end{corollary}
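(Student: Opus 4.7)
The overall strategy is to reduce this to Theorem~\ref{thm:properMapsInTopology} via the definition of properness, with the only non-formal step being the verification that the given square is a pullback in $\RTopS$.

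First, by Theorem~\ref{thm:properMapsInTopology} the geometric morphism $p_\ast\colon\Shv(P)\to\Shv(X)$ is proper. By Definition~\ref{def:Proper} this implies in particular that for every pullback square in $\RTopS$ whose right vertical map is $p_\ast$, the associated mate is invertible (properness is inherited by pullback squares along $p_\ast$ as one factors the ``double pullback'' of the definition trivially). Consequently, it is enough to show that the canonical comparison
\[
\Shv(Q)\longrightarrow \Shv(Y)\times_{\Shv(X)}\Shv(P)
\]
is an equivalence, where the target is formed in $\RTopS$.

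Next, I would establish this equivalence through locale theory. On the full subcategory of sober topological spaces, the functor $\Shv$ factors as $Z\mapsto\OO(Z)\mapsto\Shv(\OO(Z))$ through the category of locales, and the second factor preserves small limits since it is a right adjoint (with left adjoint given by the locale of subterminal objects of an $\infty$-topos). Thus it suffices to verify the locale-theoretic statement $\OO(Q)\simeq \OO(Y)\times_{\OO(X)}\OO(P)$, i.e.\ that the topological fibre product of sober spaces realises the pullback of the associated locales. Under the standing assumption that $p$ is proper and separated, this is a classical fact: properness ensures that pullbacks of proper maps of locales along any map of locales are again proper (in particular that the underlying localic fibre product behaves well), while separation ensures that the resulting locale is spatial with underlying set the topological fibre product $Q$. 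The cited reference~\cite{johnstone2002} contains the required results.

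Combining these observations, the square in the statement is a pullback in $\RTopS$ whose right vertical arrow is proper, hence by Step~1 it is horizontally left adjointable. The main obstacle in carrying out this plan is the locale-theoretic identification of $\OO(Q)$ with $\OO(Y)\times_{\OO(X)}\OO(P)$ under proper-separated hypotheses; this is purely classical and contains no $\infty$-categorical content, but it is where the soberness and the separation of $p$ are both used, and it is the sole reason that soberness appears in the hypothesis of the corollary.
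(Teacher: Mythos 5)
Your proposal is correct and follows essentially the same route as the paper: invoke Theorem~\ref{thm:properMapsInTopology} to get properness of $p_\ast$, reduce the claim to the square being a pullback in $\RTopS$, and reduce that in turn (via the limit-preservation of $\Shv\colon\mathrm{Loc}\to\RTopS$, which the paper leaves implicit) to the classical fact that for a proper separated map of sober spaces the localic pullback agrees with the topological one. The only cosmetic difference is the source for that classical localic statement (the paper cites Johnstone's 1981 results rather than \cite{johnstone2002}), so no gap remains.
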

\begin{proof}
	Using Theorem~\ref{thm:properMapsInTopology}, it suffices to show that the second square is a pullback in $\RTopS$, or equivalently that the underlying square of locales is a pullback. The latter fact follows from combining~\cite[Corollary~3.6]{Johnstone1981} with~\cite[Lemmas~2.1]{Johnstone1981}.
\end{proof}

We now move on to the proof of Theorem~\ref{thm:properMapsInTopology}.
First, let us give an informal outline of our strategy.
We begin with the special case where $X$ is the point, so that $Y$ is a compact Hausdorff space.
Let $\OO(Y)$ be the associated locale of open subsets in $Y$.
It is then a classical fact that $\OO(Y)$ is a \emph{stably compact} locale, i.e. a retract (in the category of locales) of a \emph{coherent locale} (see for example~\cite[\S~C4.1]{johnstone2002}).
In fact, using that $Y$ is compact Hausdorff, one can show that the subposet $\OO_c(Y)\into \OO(Y)$ of opens in $Y$ with compact closure is a distributive lattice, and that moreover every open subset of $Y$ is a union of opens with compact closure.
These elementary observations imply that the poset of ideals $\Id(\OO_c(Y))$ is a coherent locale and that the canonical map $\Id(\OO_c(Y))\to\OO(Y)$ (which takes an ideal to its union) admits a section sending $U\subset Y$ to the ideal $\{V\in \OO_C(Y)~\vert~\overline{V}\subset U\}$.
Moreover, both the map $\Id(\OO_c(Y))\to\OO(Y)$ and its section define morphisms in the category of locales.
Hence, one obtains that $\OO(Y)$ is a retract of a coherent locale.
By Example~\ref{ex:stably_compact_locales} this implies that $\Shv(Y)$ is a compact $\infty$-topos and therefore proper by Theorem~\ref{thm:CompactProperBaseChange}.

The proof of the general case proceeds in exactly the same way.
The only difference is that all steps now have to be carried out internally in $\Shv(X)$.
More specifically, if now $p\colon Y \to X$ is a proper and separated map of topological spaces, we obtain a $\Shv(X)$-category $\I{O}_{X}(Y)$ by means of the sheaf $U\mapsto \OO(p^\ast(U))$ on $X$.
Since $\I{O}_{X}(Y)$ takes values in the $1$-category $\Pos\into \CatS$ of posets and since we have an equivalence $\Fun^{\lim}(\Shv(X)^\op,\Pos)\simeq\Shv[\Pos](X)$, we can equivalently regard $\I{O}_{X}(Y)$ as an internal poset in the $1$-topos $\Shv[\Set](X)$.
As such, this is an example of an internal locale in the sense of~\cite[\S~C1.6]{johnstone2002}.
Now by a result of Johnstone~\cite{Johnstone1981}, the above proof that the locale of opens on a compact Hausdorff is stably compact can be interpreted internally in any $1$-topos.
Consequently, one obtains that $\I{O}_{X}(Y)$ is a stably compact $\Shv[\Set](X)$-locale, i.e. a retract (in the category of internal locales in $\Shv[\Set](X)$) of a \emph{coherent} internal locale.
Furthemore we have shown in \cite[\S~3.4.4]{PresTop} that to any internal locale $\I{L}$ in $\Shv[\Set](X)$ one can functorially associate a $\Shv(X)$-topos $\IShv[\Shv(X)](\I{L})$ and by Example~\ref{ex:stably_compact_locales} this assignment takes stably compact internal locales to compact $\Shv(X)$-topoi.
Finally $\IShv[\Shv(X)](\I{O}_{X}(Y))$ recovers the geometric morphism $p_* \colon \Shv(X) \to \Shv(Y)$ on global sections, so the result follows using Theorem~\ref{thm:CompactProperBaseChange}.

In \cite[\S~3.4.8]{PresTop} we gave a self-contained proof of Johnstones result that is phrased entirely in the language of $\infty$-topoi. Assuming the facts proven there, the proof of Theorem~\ref{thm:properMapsInTopology} is now remarkably short:

\begin{proof}[{Proof of Theorem~\ref{thm:properMapsInTopology}}]
	By \cite[Proposition~3.4.8.6]{PresTop}, $\I{O}_{X}(Y)$ is a \emph{stably compact $\Shv(X)$-locale} in the sense of \cite[Definition~3.4.7.1]{PresTop}.
	Thus it follows from Example~\ref{ex:stably_compact_locales} that $\IShv[\Shv(X)](\I{O}_{X}(Y))$ is a compact $\Shv(X)$-topos.
	As this $\Shv(X)$-topos recovers the geometric morphism $p_* \colon \Shv(X) \to \Shv(Y)$ when passing to global sections (\cite[Corollary~3.4.6.2]{PresTop}), the claim now follows from Theorem~\ref{thm:CompactProperBaseChange}.
\end{proof}

\begin{remark}
	\label{rem:Kosstrategy}
	If one assumes that the topological space $ Y $ is \emph{completely regular} (see \cite[Definition 7.3.1.12]{htt}), one can alternatively apply a number of geometric reduction steps, as in the proof of \cite[Theorem 7.3.16]{htt}, to reduce to the case where $ X = \ast $ and then use that any compact Hausdorff space is a retract of a coherent topological space.
	This proof strategy for Theorem~\ref{thm:properMapsInTopology} was explained to us by Ko Aoki.
	In comparison, 
	Lurie shows that $ \Shv(Y) $ is compact in \cite[Corollary 7.3.4.12]{htt} by using the theory of $ \mathcal{K} $-sheaves.
\end{remark}

\begin{remark}
	If $p\colon Y\to X$ is only assumed to be \emph{locally} proper (see \cite[Definition~2.3]{Schnuerer2016} for a precise definition), the same argumentation as in the proof of Theorem~\ref{thm:properMapsInTopology} shows that the $\Shv(X)$-topos $\IShv[\Shv(X)](\I{O}_{X}(Y))$ is \emph{compactly assembled}. Therefore, by suitably internalising the arguments in~\cite[\S~21.1.6]{lurie2018} (or alternatively those in~\cite{Anel2018a}), one can deduce that $p_\ast$ is \emph{exponentiable} (i.e.\ that $-\times_{\Shv(X)}\Shv(Y)\colon\RTopS\to\RTopS$ has a right adjoint) and that, as a consequence, the stable $\infty$-category $\Shv[\Sp](Y)$ of sheaves of spectra on $Y$ is a dualisable $\Shv[\Sp](X)$-module.
\end{remark}

\subsection{Proper base change with coefficients}
\label{sec:coefficients}
The goal of this section is to discuss a generalisation of Theorem~\ref{thm:CompactProperBaseChange} where we allow coefficients in an arbitrary compactly generated $\infty$-category $\EE$.
The proof is is essentially the same as the one of Theorem~\ref{thm:CompactProperBaseChange}, however this level of generality allows us to apply the result to a wider range of examples.
\begin{definition}
	\label{def:E-Proper}
	Let $ \EE $ be a presentable $ \infty $-category.
	Let $f_\ast\colon \XX\to \BB$ be a geometric morphism of $\infty$-topoi.
	We say that $f_\ast$ is \emph{$ \EE $-proper} if for every diagram
	\[\begin{tikzcd}
		{\WW'} & \WW & \XX \\
		{\ZZ'} & \ZZ & \BB
		\arrow["{f_*}", from=1-3, to=2-3]
		\arrow["{s_*}"', from=2-2, to=2-3]
		\arrow["{g_*}"', from=1-2, to=2-2]
		\arrow["{t_*}", from=1-2, to=1-3]
		\arrow["{s'_*}"', from=2-1, to=2-2]
		\arrow["{t'_*}", from=1-1, to=1-2]
		\arrow["{h_*}"', from=1-1, to=2-1]
	\end{tikzcd}\]
	in $\RTopS$ in which both squares are pullbacks, the square
	\begin{equation*}
		\begin{tikzcd}
			\WW' \otimes \EE \arrow[r, "g_\ast \otimes \EE"] \arrow[d, "p_\ast^\prime \otimes \EE"] & \WW \otimes \EE \arrow[d, "f_\ast \otimes \EE"]\\
			\ZZ' \otimes \EE \arrow[r, "q_\ast \otimes \EE"] & \ZZ \otimes \EE
		\end{tikzcd}
	\end{equation*}
	is horizontally left adjointable.
	Here $ - \otimes - \colon \RPrS \times \RPrS \to \RPrS $ denotes Lurie's tensor product of presentable $ \infty $-categories.
\end{definition}

There is an natural way to enhance Lurie's tensor products to $ \BB $-categories, that we will need to formulate our version of compactness with coefficients:

\begin{construction}
	In \cite[Construction A.0.1]{Colimits} we constructed a functor $ - \otimes \Univ[\BB] \colon \LPr \to \LPr(\BB) $ that sends a presentable $ \infty $-category $ \EE $ to the $ \BB $-category
	\[
	\EE \otimes \Univ[\BB] \colon \BB^\op \to \Cat_{\infty}; \; \; \; A \mapsto \EE \otimes \BB_{/A}.
	\]
	For a presentable $ \BB $-category $ \I{C} $, we can therefore consider the $ \BB $-category $ \I{C} \otimes \EE \coloneqq \I{C} \otimes^\BB (\EE \otimes \Univ[\BB]) $.
	Here $ -\otimes^\BB- $ denotes the tensor product of presentable $ \BB $-categories introduced in \cite[\S~2.6]{PresTop}.
	In particular $ -\otimes \EE $ defines a functor $ \LPr(\BB) \to \LPr(\BB)$.
\end{construction}

\begin{remark}
	\label{rem:linearisingcommuteswithfunctors}
	If $ \I{I} $ is a $ \BB $-category, $ \I{C} $ a presentable $ \BB $-category and $ \EE $ is a presentable $ \infty $-category, it follows from the explicit descripition of the tensor product of presentable $ \BB $-categories \cite[Proposition 2.6.2.11]{PresTop} that we have a canonical equivalence $\iFun(\I{I} , \I{C}) \otimes \EE \simeq \iFun(\I{I},\I{C} \otimes \EE)$.
	In particular it follows from \cite[Proposition~2.6.3.13]{PresTop} that we have an equivalence $\I{C}(A)\otimes \EE\simeq (\I{C}\otimes\EE)(A)$ for every $A\in\BB$.
\end{remark}

\begin{definition}
	Let $ p_* \colon \XX \to \BB $ be a geometric morphism and $ \EE $ a presentable $ \infty $-category.
	Then $ p_* $ is called $ \EE $-proper if $ \Gamma_{p_* \Univ[\XX]}\otimes \EE \colon p_* \Univ_{\XX} \otimes \EE \to \Univ_\BB \otimes \EE $ commutes with filtered colimits.
\end{definition}

We now come to the main result of this section, the $\EE$-linear version of Theorem~\ref{thm:CompactProperBaseChange}:
\begin{theorem}
	\label{thm:ProperBaseChangeWithCoefficients}
	Let $ p_* \colon \mathcal{X} \to \BB $ be a geometric morphism and $ \EE $ a compactly generated $ \infty $-category.
	Then $ p_* $ is $ \EE $-proper if and only if it is $ \EE $-compact.
\end{theorem}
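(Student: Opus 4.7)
The plan is to imitate the proof of Theorem~\ref{thm:CompactProperBaseChange} step by step, inserting the tensor product $-\otimes\EE$ at each stage. The role played by compact generation of $\EE$ is twofold: it ensures that $-\otimes \EE \colon \LPrS \to \LPrS$ preserves all (and in particular filtered) colimits, and it allows us to identify morphism objects in $\I{C}\otimes\EE$ with $\EE$-valued mapping objects obtained by applying $-\otimes\EE$ to the original mapping objects of $\I{C}$, at least on compact generators of $\EE$. Throughout, Remark~\ref{rem:linearisingcommuteswithfunctors} will be used to commute $-\otimes\EE$ past internal functor categories $\iFun(\I{I},-)$.

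For the easy direction, if $p_\ast$ is $\EE$-proper then I would mimic Lemma~\ref{lem:properImpliesCompact}. Given a filtered $\BB$-category $\I{I}$, the colimit functor $\colim_{\I{I}}$ is left exact, so $\diag\colon \Univ[\BB] \to \iFun(\I{I},\Univ[\BB])$ is an algebraic morphism and induces a pair of cartesian squares in $\RTop(\BB)$ as in the proof of Lemma~\ref{lem:properImpliesCompact}. Passing to $\infty$-topoi over $\BB$ via $\Gamma_\BB$, tensoring with $\EE$, and applying $\EE$-properness to the left square gives that the mate transformation is an equivalence, which by Lemma~\ref{lem:leftAdjointabilityDeterminedByGlobalSections} (whose $\EE$-linear analogue follows identically) precisely expresses that $\Gamma_{p_\ast\Univ[\XX]}\otimes\EE$ commutes with $\I{I}$-indexed colimits.

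For the hard direction, the strategy is to carry out the three-step reduction from \S\ref{sec:proof}. First, the analogues of Lemma~\ref{lem:pointwiseCompactness} and Proposition~\ref{prop:ProperIsH-Local} go through verbatim, because the commutative diagrams in their proofs are preserved by $-\otimes \EE$ and one only needs that the mate stays invertible. Next, the $\EE$-linear version of Corollary~\ref{cor:PresheafCategoriesCompact} follows by the same étale base change argument combined with Remark~\ref{rem:linearisingcommuteswithfunctors}, which identifies $\iFun(\I{C},p_\ast\Univ[\XX])\otimes\EE$ with $\iFun(\I{C},p_\ast\Univ[\XX]\otimes\EE)$. Using the toposic cone construction of \S\ref{sec:toposicCone} (which is insensitive to the coefficients) and the fact that closed immersions remain $\EE$-proper (they are pulled back from the ordinary proper base change for $i_\ast$, which survives tensoring), we again reduce the base change statement to the special case where $f_\ast$ is fully faithful, via the same factorisation through an internal presheaf $\BB$-category.

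The main obstacle is the $\EE$-linear analogue of Proposition~\ref{prop:SpecialCaseOfBeckChevalley}, i.e.\ showing that if $j_\ast$ is fully faithful and $p_\ast$ is $\EE$-compact then $(p^\prime_\ast (j^\prime)^\ast)\otimes\EE \to (j^\ast p_\ast)\otimes \EE$ is an equivalence. Proposition~\ref{prop:comparisonofcolimits} still identifies the relevant endofunctors with the iterated plus-construction $(-)^{\sh}_{\iota^\prime}$ built out of a bounded local class $S \subset \BB$, and this identification passes to the $\EE$-linear setting since the construction is preserved by the cocontinuous functor $-\otimes\EE$. What remains is an $\EE$-linear version of Lemma~\ref{lem:compactMorphismCommutesWithSheafification}, asserting that $(p_\ast X)^+_\iota \otimes \EE \simeq p_\ast (X^+_{\iota^\prime})\otimes \EE$ for $X \in \XX\otimes\EE$. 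The chain of equivalences in that lemma is built from (i) the adjunction $\const \dashv \Gamma$, (ii) the tensor-hom adjunction for the cartesian closed $\BB$-topos $p_\ast\Univ[\XX]$, and (iii) the $\IFilt$-cocontinuity of $\Gamma_{p_\ast\Univ[\XX]}$. Steps (i) and (ii) are compatible with tensoring by $\EE$ because $p_\ast\Univ[\XX]\otimes\EE$ inherits a module structure over $p_\ast\Univ[\XX]$ with the correct internal hom. Step (iii) is exactly the definition of $\EE$-compactness of $p_\ast$. Compact generation of $\EE$ enters here to guarantee that these enriched manipulations can be checked after mapping out of $e \in \EE^c$, where everything reduces to ordinary filtered colimit statements in $\XX$ and $\BB$. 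Assembling these pieces as in the proof of Theorem~\ref{thm:CompactProperBaseChange} completes the argument.
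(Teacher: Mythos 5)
Your overall architecture follows the paper's, but there is a genuine gap at the point where you assert that Lemma~\ref{lem:leftAdjointabilityDeterminedByGlobalSections} has an $\EE$-linear analogue that ``follows identically'' and that Proposition~\ref{prop:ProperIsH-Local} therefore goes through ``verbatim''. This is false, and the failure is precisely the new difficulty in the coefficient setting: take $\EE=\Sub(\SS)\simeq\Delta^1$, which is compactly generated. Left adjointability of a square after tensoring with $\Sub(\SS)$ only says that the mate is an equivalence on $(-1)$-truncated objects; but checking the mate at an arbitrary section $A\in\BB$ amounts to checking it on $(-1)$-truncated objects of slices, i.e.\ on subobjects of arbitrary objects, and these need not be $(-1)$-truncated. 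Hence an equivalence on global sections does not propagate to all sections, and the detection principle of Lemma~\ref{lem:leftAdjointabilityDeterminedByGlobalSections} is simply unavailable with coefficients. The locality statement you need must therefore be formulated with strengthened hypotheses: in the paper this is Proposition~\ref{prop:ProperIsH-LocalLinear}, which demands that the given squares remain left adjointable after applying $(-\times_{\BB}\Over{\BB}{A})\otimes\EE$ for \emph{every} $A\in\BB$, not merely after $-\otimes\EE$.

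This gap then propagates into your treatment of the $\EE$-linear version of Corollary~\ref{cor:PresheafCategoriesCompact}: it does not follow ``by the same \'etale base change argument combined with Remark~\ref{rem:linearisingcommuteswithfunctors}''. One must verify the strengthened local hypotheses for the square comparing $\Fun_{\BB}(\I{C}, f_\ast\Univ[\XX])\to\Fun_{\BB}(\I{C},\Univ[\BB])$ with its \'etale cover by the slice over $\I{C}^{\core}$, i.e.\ show that every base change of this square along an \'etale morphism of the presheaf $\BB$-topos is again left adjointable after tensoring with $\EE$. In the paper this is the content of Lemma~\ref{lem:straighteningFunctorialityBaseTopos} together with the proof of Corollary~\ref{cor:PresheafCategoriesCompactLinear}, where naturality of straightening identifies the base change along $(\pi_F)_\ast$, for $F\in\Fun_{\BB}(\I{C},\Univ[\BB])$, with the analogous comparison square for the $\BB$-category $\Under{\I{C}}{F}$; without this (or a substitute) the reduction to the fully faithful case is not justified. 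By contrast, your handling of the fully faithful case itself (the $\EE$-linear analogues of Proposition~\ref{prop:comparisonofcolimits} and Lemma~\ref{lem:compactMorphismCommutesWithSheafification}, checked after mapping out of compact objects of $\EE$, which is in effect the paper's identification $\XX\otimes\EE\simeq\Fun^{\lex}((\EE^{\compact})^{\op},\XX)$) is in the right spirit, and the easy direction can be repaired without the faulty lemma by applying $\EE$-properness section-wise at each $A\in\BB$, exactly as in the proof of Lemma~\ref{lem:properImpliesCompact}.
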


\begin{remark}
	More generally one could define that $ p_* \colon \XX \to \BB $ is $ \I{E} $-compact for a presentable $ \BB $-category $\I{E}$,
	whenever $ p_* \Univ[\XX] \otimes^\BB \I{E} \to \I{E} $ commutes with filtered colimits.
	Similiary one can also define a notion of $ \I{E} $-properness.
	Then the analoguge of Theorem~\ref{thm:ProperBaseChangeWithCoefficients} still holds whenever $ \I{E} $ is compactly generated (in a suitable $\BB$-categorical sense).
	We decided to only prove the result in the case where $ \I{E} = \Univ[\BB] \otimes \EE$, since the proof is slightly less technical and since this case already contains most examples of interest.
\end{remark}

\begin{remark}
	\label{rem:TensorIsLexFunctors}
	Let $ \EE $ be a compactly generated $ \infty $-category and $ \I{X} $ a $ \BB $-topos.
	Then for any $ A \in \BB $ we may identify the tensor product $ \I{X}(A) \otimes \EE $ with the $ \infty $-category $ \Fun^\lex((\EE^{\compact})^{\op},\I{X}(A)) $ (where $\EE^{\compact}\into\EE$ is the full subcategory of compact objects).
	Furthermore, since for any map $s \colon  B \to A $ the transition functors $ s^* \colon \I{X}(A) \to \I{X}(B)$ is a left exact left adjoint, it follows that we may identify the transition map $ s^* \otimes \EE \colon (\I{X} \otimes \EE)(A) \to (\I{X} \otimes \EE)(B)$ with the functor
	\[
	\Fun^\lex((\EE^{\compact})^{\op},\I{X}(A)) \to \Fun^\lex((\EE^{\compact})^{\op},\I{X}(B))
	\]
	given by postcomposition with $ s^* $.
	Now let $ f_* \colon \I{X} \to \I{Y} $ be a geometric morphism of $ \BB $-topoi.
	Since $ f_* $ and $ f^* $ are both left exact if follows as in \cite[Observation 2.9]{haine2021} that the induced morphism $ f_* \otimes \EE $ is given by pointwise postcomposition with $ f_* $ and its left adjoint is given by postcomposition with $ f^* $.
\end{remark}

We begin by establish the $\EE$-linear analogue of Corollary~\ref{cor:PresheafCategoriesCompact}. This requires a few preparations:
\begin{proposition}
	\label{prop:ProperIsH-LocalLinear}
	Let $f_\ast\colon\XX\to\BB$ be a geometric morphism of $\infty$-topoi and let $ \EE $ be a compactly generated $ \infty $-category. 
	Assume that there exists a family of commutative squares
	\[\begin{tikzcd}
		{\WW_i} & \XX \\
		\ZZ_i & \BB
		\arrow[from=1-2, to=2-2,"f_*"]
		\arrow[from=2-1, to=2-2,"p^i_*"]
		\arrow[from=1-1, to=2-1,"(g^i)_*"]
		\arrow[from=1-1, to=1-2]
	\end{tikzcd}\]
	such that for every $A\in\BB$ the functor $ (-\times_{\BB}\Over{\BB}{A}) \otimes \EE $ carries these squares to left adjointable squares, the $ p_i^* \otimes \EE $ are jointly conservative and each $ (g^i_*) $ is $ \EE $-compact.
	Then $ f_* $ is $ \EE $-compact.
\end{proposition}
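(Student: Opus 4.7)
The plan is to mimic the proof of Proposition~\ref{prop:ProperIsH-Local} in the $\EE$-linear setting, substituting $\EE$-tensored analogs of Lemmas~\ref{lem:leftAdjointabilityDeterminedByGlobalSections} and~\ref{lem:pointwiseCompactness} at each step.

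First, exactly as in the non-linear version, I would reduce to showing that for every filtered $\BB$-category $\I{I}$ the mate of the square
\[
\begin{tikzcd}
f_\ast\Univ[\XX]\otimes\EE \arrow[r, "\diag"]\arrow[d, "\Gamma_{f_\ast\Univ[\XX]}\otimes\EE"'] & \iFun(\I{I},f_\ast\Univ[\XX]\otimes\EE)\arrow[d, "(\Gamma_{f_\ast\Univ[\XX]})_{\ast}\otimes\EE"]\\
\Univ[\BB]\otimes\EE \arrow[r, "\diag"] & \iFun(\I{I},\Univ[\BB]\otimes\EE)
\end{tikzcd}
\]
is invertible, where I have used Remark~\ref{rem:linearisingcommuteswithfunctors} to identify $\iFun(\I{I},-)\otimes\EE$ with $\iFun(\I{I},-\otimes\EE)$. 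The reduction from filtered $\Over{\BB}{A}$-categories to filtered $\BB$-categories is accomplished by pulling back the input family of squares along $(\pi_A)_\ast$, which again satisfies the hypotheses — crucially, the left-adjointability after $-\otimes\EE$ was assumed for every étale slice.

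Next, I would establish an $\EE$-linear analog of Lemma~\ref{lem:leftAdjointabilityDeterminedByGlobalSections} to reduce the invertibility of this mate to its invertibility on global sections. Using Remark~\ref{rem:TensorIsLexFunctors}, at every $A\in\BB$ one has $(\I{X}\otimes\EE)(A)\simeq\Fun^{\lex}((\EE^{\compact})^{\op},\I{X}(A))$, every functor appearing in the $\EE$-linear mate acts by pointwise postcomposition, and the mate itself is accordingly a natural transformation of left exact functors. Since the étale-generation argument from the proof of Lemma~\ref{lem:leftAdjointabilityDeterminedByGlobalSections} applies pointwise — expressing every lex $F\colon (\EE^{\compact})^{\op}\to\I{X}(A)$ as a finite limit of functors in the image of $\pi_A^\ast\otimes\EE$ — the original reduction to the case $A=1$ goes through.

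Third, after restricting to global sections, I would apply joint conservativity of the $(p^i)^\ast\otimes\EE$ and invoke the $\EE$-linear analog of Lemma~\ref{lem:pointwiseCompactness}: stacking the $\EE$-tensored $i$-th base square on top of the $\diag$-$\Gamma$ squares yields a three-dimensional diagram in which the back square is left adjointable by the preceding step combined with the hypothesis that the input squares are left adjointable after $-\otimes\EE$, and the right square is left adjointable because $g^i_\ast$ is $\EE$-compact and the pullback $(p^i)^\ast\I{I}$ is still filtered. Functoriality of mates then transports the equivalence around the diagram, forcing the mate of the left square to become an equivalence after applying $(p^i)^\ast\otimes\EE$ for every $i$; joint conservativity closes the argument.

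The main obstacle will be the careful verification of the $\EE$-linear analog of Lemma~\ref{lem:leftAdjointabilityDeterminedByGlobalSections}. Although Remark~\ref{rem:TensorIsLexFunctors} furnishes the pointwise description needed, some care is required since $\I{X}\otimes\EE$ is in general only a presentable $\BB$-category and not a $\BB$-topos, so the étale base change properties of the transitions $\pi_A^\ast$ must be invoked pointwise through the underlying $\BB$-topos structure of $\I{X}$. Once this foundational lemma is in place, the remainder of the argument is a faithful $\EE$-tensored transcription of the proof of Proposition~\ref{prop:ProperIsH-Local}.
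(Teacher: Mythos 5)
Your first and third steps match the paper's argument, but the second step contains a genuine gap: the $\EE$-linear analogue of Lemma~\ref{lem:leftAdjointabilityDeterminedByGlobalSections} that you propose to use is false in general, even for compactly generated $\EE$. The original proof of that lemma hinges on the fact that $\I{X}(A)$ is \'etale over $\I{X}(1)$, so that every object of $\I{X}(A)$ is a finite limit of objects in the image of $\pi_A^\ast$; after tensoring with $\EE$ this generation statement breaks down. Concretely, take $\EE=\Sub(\SS)\simeq\Delta^1$, which is compactly generated. Then $(f_\ast\Univ[\XX]\otimes\EE)(A)$ is the poset of subobjects of $f^\ast A$ in $\XX$, while the image of $\pi_A^\ast\otimes\EE$ consists only of subobjects of the form $U\times f^\ast A$ with $U$ subterminal in $\XX$; finite limits (i.e.\ finite intersections) of such objects are again of this form, so a general subobject of $f^\ast A$ is \emph{not} a finite limit of objects pulled back from $(f_\ast\Univ[\XX]\otimes\EE)(1)$. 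Hence your claim that every left exact functor $(\EE^{\compact})^{\op}\to\I{X}(A)$ is a finite limit of functors in the image of $\pi_A^\ast\otimes\EE$ fails, and with it the reduction of the invertibility of the mate to its invertibility on global sections. This failure is exactly why the hypothesis of the proposition is imposed slice-wise (left adjointability after applying $(-\times_{\BB}\Over{\BB}{A})\otimes\EE$ for \emph{every} $A$), whereas in the coefficient-free Proposition~\ref{prop:ProperIsH-Local} the corresponding local statement came for free from Lemma~\ref{lem:leftAdjointabilityDeterminedByGlobalSections}.

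The repair does not require any new lemma: instead of deducing the value of the mate at $A\in\BB$ from its value on global sections, base change the entire family of squares along $(\pi_A)_\ast\colon\Over{\BB}{A}\to\BB$. By hypothesis the resulting squares are still left adjointable after tensoring with $\EE$, the pullbacks of the $p_i^\ast\otimes\EE$ remain jointly conservative, and the $g^i_\ast$ remain $\EE$-compact; running your global-sections argument (joint conservativity together with the $\EE$-linear version of Lemma~\ref{lem:pointwiseCompactness}, which does transcribe verbatim using Remark~\ref{rem:linearisingcommuteswithfunctors} and Remark~\ref{rem:TensorIsLexFunctors}) internally to $\Over{\BB}{A}$ then yields the invertibility of the mate evaluated at $A$ directly, and the same replacement handles colimits indexed by filtered $\Over{\BB}{A}$-categories. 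This is the route the paper takes.
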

\begin{proof}
	The proof is essentially the same as the one of Proposition~\ref{prop:ProperIsH-Local}.
	We first check that for every filtered $ \BB $-category $ \I{I} $ the mate of the commutative square
	\[
	\begin{tikzcd}
		f_\ast\Univ[\XX] \otimes \EE \arrow[r, "\diag"]\arrow[d, "f_* \otimes \EE "] & \iFun[\BB](\I{I}, f_\ast\Univ[\XX] \otimes \EE)\arrow[d, "(\Gamma_{f_\ast\Univ[\XX]}\otimes \EE)_\ast "]\\
		\Univ[\BB] \otimes \EE \arrow[r, "\diag"] & \iFun[\BB](\I{I}, \Univ[\BB] \otimes \EE)
	\end{tikzcd}
	\]
	is an equivalence.
	Let us first show that the mate $ \colim_{\I{I}}  (\Gamma_{f_\ast\Univ[\XX]}\otimes \EE)_\ast \to (f_* \otimes \EE) \colim_{\I{I}} $ is an equivalence after passing to global sections.
	For this, it suffices to see that the mate is an equivalence after composing with the maps $p_i^* \otimes \EE \colon \BB \otimes \EE \to \ZZ_i \otimes \EE $ for all $ i $.
	But now the claims follows from an $\EE$-linear version of Lemma~\ref{lem:pointwiseCompactness}, which is proved in exactly the same way.
	To see that the mate is an equivalence after evaluating at $ A \in \BB $ we may replace $ \BB $ by $ \BB_{/A} $ and the above square by its base change along $\pi_A^\ast$ to reduce to the case treated above.
	Finally, we have to see that for every $ A \in \BB $ the functor of $ \BB_{/A} $-categories $ \pi_A^* (f_\ast\Univ[\XX] \otimes \EE) \colon \pi_A^*(f_\ast\Univ[\XX] \otimes \EE) \to \pi_A^*(\Univ[\BB] \otimes \EE)$ commutes with colimits indexed by filtered $ \BB_{/A} $-categories.
	But this follows again from the above after replacing $ \BB $ by $ \BB_{/A} $.
\end{proof}

\begin{remark}
	Note that in Proposition~\ref{prop:ProperIsH-LocalLinear}, we require that the assumptions also hold locally on $ \BB $, while for the version without coefficents (Proposition~\ref{prop:ProperIsH-Local}) this was automatic due to Lemma~\ref{lem:leftAdjointabilityDeterminedByGlobalSections}.
	To illustrate why Lemma~\ref{lem:leftAdjointabilityDeterminedByGlobalSections} may fail when using coefficients, consider the example where $ \EE = \Sub(\SS)\simeq \Delta^1$ is the $ \infty $-category of $(-1)$-truncated spaces.
	Then, a square
	\[\begin{tikzcd}
		{\WW_i} & \YY \\
		\ZZ_i & \BB
		\arrow[from=1-2, to=2-2,"f_*"]
		\arrow[from=2-1, to=2-2,"p^i_*"]
		\arrow[from=1-1, to=2-1,"(g^i)_*"]
		\arrow[from=1-1, to=1-2]
	\end{tikzcd}\]
	being horizontally left adjointable after tensoring with $ \Sub(\SS) $ simply means that the mate transformation is an equivalence on $ (-1) $-truncated objects in $ \YY $.
	However, after passing to a slice $ \XX_{/X} $, the mate transformations now involves $ (-1) $-truncated objects in $ \XX_{/X} $, i.e.\ subobjects of $X$. These need not be $ (-1) $-truncated in general, therefore there is no reason for the mate transformation to be an equivalence.
\end{remark}

\begin{remark}
	The proof of Proposition~\ref{prop:ProperIsH-LocalLinear} shows that more generally we do not need the existence of such squares for every $A \in \BB $, but it suffices to find these for a set of objects $ A_i \in \BB $ that generates $ \BB $ under colimits.
\end{remark}

\begin{lemma}
	\label{lem:straighteningFunctorialityBaseTopos}
	For every $\BB$-category $\I{C}$ and every geometric morphism $f_\ast\colon \XX\to\BB$, the straightening equivalences $\Fun(\I{C},\Univ[\BB])\simeq\LFib_{\BB}(\I{C})$ and $\Fun_{\BB}(\I{C}, f_\ast\I{C})\simeq\Fun_{\XX}(f^\ast\I{C},\Univ[\XX])\simeq\LFib_{\XX}(f^\ast\I{C})$ fit into a commutative square
	\begin{equation*}
		\begin{tikzcd}
			\Fun_{\BB}(\I{C},\Univ[\BB])\arrow[r, "\simeq"]\arrow[d, "\const_{ f_\ast\Univ[\XX]}"] & \LFib_{\BB}(\I{C})\arrow[d, "f^\ast"]\\
			\Fun_{\BB}(\I{C}, f_\ast\Univ[\XX])\arrow[r, "\simeq"] & \LFib_{\XX}(f^\ast\I{C})
		\end{tikzcd}
	\end{equation*}
	where $f^\ast$ is the restriction of $f^\ast\colon \Cat(\BB)_{/\I{C}}\to\Cat(\XX)_{/f^\ast\I{C}}$ to left fibrations. Moreover, this commutative square is natural in $\I{C}$.
\end{lemma}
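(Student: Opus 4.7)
Naturality of the square in $\I{C}$ is automatic from the naturality of the straightening equivalence (Proposition~\ref{prop:straightening}) and the manifest naturality of the vertical maps, so the main task is to show commutativity for each fixed $\I{C}$. The plan is to reduce this to a single universal instance by Yoneda: both composites define natural transformations $\Fun_\BB(-,\Univ[\BB])\Rightarrow\LFib_\XX(f^\ast(-))$ of $\CatSS$-valued presheaves on $\Cat(\BB)^\op$, and since the source is corepresented by $\Univ[\BB]$ via $\id_{\Univ[\BB]}$, it suffices to check agreement at this universal element.

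Tracing through the square at $\id_{\Univ[\BB]}$: the right-then-down path yields, via Proposition~\ref{prop:universalLeftFibration}, the universal left fibration $(\pi_{1_{\Univ[\BB]}})_!$ and then its $f^\ast$-pullback. The down-then-right path sends $\id_{\Univ[\BB]}$ to $\const_{f_\ast\Univ[\XX]}$, which transposes under $f^\ast\dashv f_\ast$ to a functor $\widetilde{\const}\colon f^\ast\Univ[\BB]\to\Univ[\XX]$, and then straightens (again by Proposition~\ref{prop:universalLeftFibration}) to $\widetilde{\const}^\ast(\pi_{1_{\Univ[\XX]}})_!$. The lemma thus reduces to producing a natural equivalence $f^\ast(\pi_{1_{\Univ[\BB]}})_!\simeq\widetilde{\const}^\ast(\pi_{1_{\Univ[\XX]}})_!$ in $\LFib_\XX(f^\ast\Univ[\BB])$.

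For this, I would use that both universal left fibrations are constructed in Proposition~\ref{prop:universalLeftFibration} as pullbacks of the endpoint projection $(d_1,d_0)\colon\iFun(\Delta^1,\Univ)\to\Univ\times\Univ$ along $1\times\id$. Since $f^\ast\colon\Cat(\BB)\to\Cat(\XX)$ is left exact---being left Kan extension along the left exact topos-level morphism $f^\ast\colon\BB\to\XX$---it transports this pullback description to the $\XX$-side, and using that $\widetilde{\const}\circ f^\ast 1_\BB \simeq 1_\XX$ (a consequence of the fact that algebraic morphisms preserve the final object), the natural comparison map $f^\ast\iFun(\Delta^1,\Univ[\BB])\to\iFun(\Delta^1,\Univ[\XX])$ induced by $\widetilde{\const}$ assembles into a canonical comparison between the two pullback descriptions.

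The hard part will be verifying that this comparison is actually an equivalence, which amounts to showing that the canonical square
\[
	\begin{tikzcd}
		f^\ast\iFun(\Delta^1,\Univ[\BB]) \arrow[r]\arrow[d] & \iFun(\Delta^1,\Univ[\XX])\arrow[d]\\
		(f^\ast\Univ[\BB])^2 \arrow[r,"\widetilde{\const}\times\widetilde{\const}"'] & \Univ[\XX]^2
	\end{tikzcd}
\]
is cartesian. This will require combining the compatibility of $f^\ast$ with the cotensor $\iFun(\Delta^1,-)$ (a consequence of left exactness, since $\Delta^1$ is a finite $\infty$-category) with a compatibility property of $\widetilde{\const}$ on morphism objects that reflects the universal role of $\const_{f_\ast\Univ[\XX]}$ as the unique algebraic morphism out of the final $\BB$-topos $\Univ[\BB]$.
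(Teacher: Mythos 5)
Your reduction to the universal element is where the argument first breaks. The source $\Fun_{\BB}(-,\Univ[\BB])$ of your two transformations is a $\CatSS$-valued presheaf, and what $\Univ[\BB]$ corepresents is only its core, $\map{\Cat(\BB)}(-,\Univ[\BB])\simeq\Fun_{\BB}(-,\Univ[\BB])^{\core}$; the $\infty$-categorical Yoneda lemma therefore does not apply to it. Naturality does let you transport the universal instance along any object $F\colon\I{C}\to\Univ[\BB]$, but this only shows that the two composites $\Fun_{\BB}(\I{C},\Univ[\BB])\to\LFib_{\XX}(f^\ast\I{C})$ agree object by object; it produces neither an equivalence of the two composite functors nor the naturality in $\I{C}$ that the lemma asserts. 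To argue along these lines you would need a genuinely enriched (2-categorical) Yoneda argument, e.g.\ one treating the whole family $\iFun(\Delta^{\bullet},\Univ[\BB])$ coherently rather than the single element $\id_{\Univ[\BB]}$ --- coherence that the paper sidesteps entirely by deferring to \cite[Lemma~4.6.4]{Cocartesian}.

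More seriously, the statement you isolate as ``the hard part'' is false. After the identification $f^\ast\iFun(\Delta^1,\Univ[\BB])\simeq\iFun[\XX](\Delta^1,f^\ast\Univ[\BB])$ that you invoke, cartesianness of your displayed square is precisely the assertion that $\widetilde{\const}\colon f^\ast\Univ[\BB]\to\Univ[\XX]$ is fully faithful, and this fails for a general geometric morphism. Take $\BB=\SS$, so that $f^\ast\colon\SS\to\XX$ is the constant-sheaf functor, and let $A=\coprod_{\mathbb{N}}\ast$ and $B=S^0$. Since $f^\ast\colon\Cat(\SS)\to\Cat(\XX)$ is left exact and compatible with the cotensor by $\Delta^1$, it preserves mapping groupoids of objects in its image, so $\map{f^\ast\Univ[\SS]}(f^\ast A,f^\ast B)\simeq f^\ast\map{\Univ[\SS]}(A,B)\simeq f^\ast\bigl(\prod_{\mathbb{N}}B\bigr)$, whereas $\map{\Univ[\XX]}(\widetilde{\const}f^\ast A,\widetilde{\const}f^\ast B)\simeq\map{\Univ[\XX]}\bigl(\coprod_{\mathbb{N}}1,f^\ast B\bigr)\simeq\prod_{\mathbb{N}}f^\ast B$, using that $\map{\Univ[\XX]}(1,-)\simeq\id$. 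Full faithfulness would thus force the constant-sheaf functor to preserve countable products, which fails, e.g.\ for $\XX=\Shv(C)$ with $C$ the Cantor set. What you actually need is only the pullback of your square along $1_{\Univ[\XX]}\times\id$, i.e.\ the statement that $f^\ast$ of the universal left fibration is classified by $\widetilde{\const}$; it is exactly this restriction to arrows out of the final object that makes the obstruction above disappear. So the final step must be reorganised around that weaker claim, and even granting it, the first gap still blocks the passage from the universal instance to the natural-in-$\I{C}$ commutativity asserted in the lemma.
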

\begin{proof}
	This is shown in exactly the same fashion as~\cite[Lemma~4.6.4]{Cocartesian}.
\end{proof}

\begin{corollary}
	\label{cor:PresheafCategoriesCompactLinear}
	Let $ f_\ast\colon \XX \to \BB $ be an $ \EE $-compact geometric morphism and $\I{C} $ a $ \BB $-category.
	Then the geometric morphism $(\Gamma_{f_\ast\Univ[\XX]})_\ast\colon \Fun_{\BB}(\I{C}, f_* \Univ[\XX]) \to \Fun_{\BB}(\I{C},\Univ[\BB]) $ is $ \EE $-compact.
\end{corollary}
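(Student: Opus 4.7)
\medskip

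The plan is to adapt the proof of Corollary~\ref{cor:PresheafCategoriesCompact} to the $\EE$-linear setting, replacing Proposition~\ref{prop:ProperIsH-Local} with its linearised version Proposition~\ref{prop:ProperIsH-LocalLinear}. As before, the core inclusion $\iota\colon\I{C}^\core\into\I{C}$ determines a geometric morphism $\iota_*\colon\Over{\BB}{\I{C}^\core}\simeq\Fun_\BB(\I{C}^\core,\Univ[\BB])\to\Fun_\BB(\I{C},\Univ[\BB])$ whose left adjoint $\iota^*$ is given by restriction along $\iota$. Using Lemma~\ref{lem:straighteningFunctorialityBaseTopos}, the analogous commutative diagram
\[\begin{tikzcd}
	{\XX_{/f^*(\I{C}^\core)}} & {\Fun_\BB(\I{C},f_*\Univ[\XX])} & \XX \\
	{\BB_{/\I{C}^\core}} & {\Fun_\BB(\I{C},\Univ[\BB])} & \BB
	\arrow[from=1-3, to=2-3,"f_*"]
	\arrow[from=2-2, to=2-3,"\lim"]
	\arrow[from=1-2, to=1-3,"\lim"]
	\arrow[from=1-1, to=2-1]
	\arrow[from=2-1, to=2-2,"\iota_*"]
	\arrow[from=1-1, to=1-2]
	\arrow[from=1-2, to=2-2]
\end{tikzcd}\]
consists of two cartesian squares (the right one by~\cite[Example~3.2.7.5]{PresTop}), so that the left vertical morphism is the \'etale base change of $f_\ast$ along $\I{C}^\core\to 1_\BB$.

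The first step is to verify that $\EE$-compactness is preserved under \'etale base change, which implies that the left vertical morphism in the diagram above is $\EE$-compact. This follows directly from unpacking the definition: for any $A\in\BB$ and any filtered $\Over{\BB}{A}$-category $\I{I}$, the claim that the slice $\pi_A^*(f_*\Univ[\XX])\otimes\EE\to\pi_A^*\Univ[\BB]\otimes\EE$ preserves $\I{I}$-indexed colimits is an instance of the $\EE$-compactness of the original $\Gamma_{f_*\Univ[\XX]}\otimes\EE$ after replacing $\BB$ by $\Over{\BB}{A}$ (which is legitimate since the definition of $\EE$-compactness is manifestly local on $\BB$ and $\EE$-compactness of a geometric morphism translates via the equivalence of Theorem~\ref{thm:TopoiAreRelativeTopoi} to an internally formulated condition on the associated $\BB$-topos).

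Next, I verify the remaining hypotheses of Proposition~\ref{prop:ProperIsH-LocalLinear} with $p^1_*=\iota_*$ and $g^1_*$ the left vertical map above. For the conservativity condition, observe that by Remark~\ref{rem:TensorIsLexFunctors}, the functor $\iota^*\otimes\EE$ is identified with postcomposition with $\iota^*$ on $\Fun^\lex((\EE^\compact)^\op,-)$. Since a natural transformation of functors into a left exact $\infty$-category is an equivalence iff it is pointwise, and since $\iota^*$ is itself conservative (as restriction along the core), it follows that $\iota^*\otimes\EE$ is conservative; the same argument applies after any \'etale base change $\BB\to\Over{\BB}{A}$ since étale base change of a geometric morphism remains a geometric morphism whose left adjoint admits the same pointwise description. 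For the left adjointability condition, the original square in question is a pullback in $\RTopS$, hence horizontally left adjointable, and this property is preserved after tensoring with $\EE$ and slicing: by Remark~\ref{rem:TensorIsLexFunctors}, both the pushforward functors and their left adjoints in the tensored square are given by pointwise composition with the originals, so the mate transformation after tensoring is the original mate applied pointwise and is therefore still an equivalence.

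The main obstacle, and where care is required, is the verification of the locally-verified left adjointability and conservativity conditions that appear in Proposition~\ref{prop:ProperIsH-LocalLinear} but were automatic in the unlinearised version (via Lemma~\ref{lem:leftAdjointabilityDeterminedByGlobalSections}). Here the point is that tensoring with $\EE$ and slicing both have clean pointwise descriptions coming from Remark~\ref{rem:TensorIsLexFunctors}, so that the Beck--Chevalley and conservativity conditions for the original square propagate to all slices and all values of the $\EE$-linearisation. Once these verifications are in place, Proposition~\ref{prop:ProperIsH-LocalLinear} immediately yields the $\EE$-compactness of $(\Gamma_{f_*\Univ[\XX]})_\ast$.
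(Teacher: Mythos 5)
There is a genuine gap, and it sits exactly at the point you yourself flag as ``the main obstacle''. In Proposition~\ref{prop:ProperIsH-LocalLinear} the left-adjointability hypothesis must be verified after applying $(-\times_{\ZZ}\Over{\ZZ}{A})\otimes\EE$ for \emph{every} object $A$ of the base $\ZZ$ of the morphism being tested; in the present application that base is $\ZZ=\Fun_{\BB}(\I{C},\Univ[\BB])$, so $A$ ranges over arbitrary functors $F\colon\I{C}\to\Univ[\BB]$, not over objects of $\BB$. Your proposal only ever slices over objects of $\BB$ (``after any \'etale base change $\BB\to\Over{\BB}{A}$'') and otherwise appeals to ``clean pointwise descriptions coming from Remark~\ref{rem:TensorIsLexFunctors}''; but that remark describes the effect of $-\otimes\EE$ on geometric morphisms of $\BB$-topoi and says nothing about how the core-inclusion square transforms under base change along $\Over{\Fun_{\BB}(\I{C},\Univ[\BB])}{F}\to\Fun_{\BB}(\I{C},\Univ[\BB])$. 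This is precisely where the paper's proof spends all of its effort: using Lemma~\ref{lem:straighteningFunctorialityBaseTopos}, the left-cancellation property of left fibrations, and the fact that the pullback of $\Under{\I{C}}{F}\to\I{C}$ along $\I{C}^{\core}\to\I{C}$ is $(\Under{\I{C}}{F})^{\core}$, it identifies the base change of the square along $(\pi_F)_\ast$ with the analogous square for the $\BB$-category $\Under{\I{C}}{F}$, after which the hypotheses can be checked uniformly. Without this identification (or a substitute), the appeal to Proposition~\ref{prop:ProperIsH-LocalLinear} is unjustified; note that this extra step is exactly why the linear corollary cannot be proved by a verbatim repetition of Corollary~\ref{cor:PresheafCategoriesCompact}, since Lemma~\ref{lem:leftAdjointabilityDeterminedByGlobalSections} is unavailable.

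A secondary error: you assert that ``the original square in question is a pullback in $\RTopS$, hence horizontally left adjointable''. Pullback squares of $\infty$-topoi are not left adjointable in general --- that failure is the entire subject of properness --- and the bottom map $\Over{\BB}{\I{C}^{\core}}\to\Fun_{\BB}(\I{C},\Univ[\BB])$ is not \'etale, so no base-change theorem applies. The adjointability of this particular square is true, but for a different reason: its horizontal left adjoints are restriction along the core inclusion and its vertical functors are given by postcomposition with $\Gamma_{f_\ast\Univ[\XX]}$ (resp.\ its left adjoint), so the mate is the evident identification; it is this explicit description (which also persists for the squares associated with $\Under{\I{C}}{F}$, and, via Remark~\ref{rem:TensorIsLexFunctors}, after tensoring with $\EE$) that should be invoked. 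Your treatment of the remaining hypotheses --- \'etale base changes of $\EE$-compact morphisms are $\EE$-compact, and $\iota^\ast\otimes\EE$ is conservative --- is fine and agrees with the paper.
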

\begin{proof}
	Let $ F \in \Fun_{\BB}(\I{C},\Univ[\BB])$ be an arbitrary functor and  let us set $G= (\const_{ f_\ast\Univ[\XX]})_\ast(F)$. Furthermore, let $\Under{\I{C}}{F}\to\I{C}$ be the left fibration associated to $F$ via the straightening equivalence. We then deduce from Lemma~\ref{lem:straighteningFunctorialityBaseTopos} and the fact that left fibrations satisfy the left cancellation property (being determined by a factorisation system) that we have a commutative diagram
	\begin{equation*}
		\begin{tikzcd}
			\Over{\Fun_{\BB}(\I{C}, \Univ[\BB])}{F}\arrow[d, "(\const_{ f_\ast\Univ[\XX]})_\ast"]\arrow[r, "\simeq"] & \LFib_{\BB}(\I{C})_{/(\Under{\I{C}}{F})}\arrow[d, "f^\ast"] \arrow[r, "\simeq"] & \LFib_{\BB}(\Under{\I{C}}{F})\arrow[r, "\simeq"]\arrow[d, "f^\ast"] & \Fun_{\BB}(\Under{\I{C}}{F}, \Univ[\BB])\arrow[d, "(\const_{ f_\ast\Univ[\XX]})_\ast"]\\
			\Over{\Fun_{\BB}(\I{C}, \Univ[\BB])}{G}\arrow[r, "\simeq"] & \LFib_{\XX}(f^\ast\I{C})_{/f^\ast(\Under{\I{C}}{F})}\arrow[r, "\simeq"] & \LFib_{\XX}(f^\ast(\Under{\I{C}}{F}))\arrow[r, "\simeq"] & \Fun_{\BB}(\Under{\I{C}}{F}, f_\ast\Univ[\XX])
		\end{tikzcd}
	\end{equation*}
	which is natural in $\I{C}$.
	Thus, by passing to right adjoints, the base change of $(\Gamma_{f_\ast\Univ[\XX]})_\ast$ along the geometric morphism $(\pi_F)_\ast\colon \Over{\Fun_{\BB}(\I{C},\Univ[\BB])}{F}\to\Fun_{\BB}(\I{C},\Univ[\BB])$ can be identified with with the geometric morphism $(\Gamma_{f_\ast\Univ[\XX]})\colon\Fun_{\BB}(\Under{\I{C}}{F}, f_\ast\Univ[\XX])\to\Fun_{\BB}(\Under{\I{C}}{F}, \Univ[\BB])$. Also, the base change of the right adjoint of the restriction functor $\Fun_{\BB}(\I{C},\Univ[\BB])\to \Fun_{\BB}(\I{C}^\core,\Univ[\BB])\simeq\Over{\BB}{\I{C}^\core}$ along $(\pi_A)_\ast$ can be identified with the right adjoint of the restriction $\Fun_{\BB}(\Under{\I{C}}{F},\Univ[\BB])\to\Over{\BB}{(\Under{\I{C}}{F})^\core}$ (using that the pullback of $\Under{\I{C}}{F}\to\I{C}$ along $\I{C}^\core\to \I{C}$ is $(\Under{\I{C}}{F})^\core$, see~\cite[Corollary~4.1.16]{Yoneda}).
	Consequently, we conclude that the pullback square
	\[\begin{tikzcd}
		{\XX_{/f^*\I{C}^\simeq}} & {\Fun_{\BB}(\I{C},f_* \Univ[\XX])} \\
		{\BB_{/\I{C}^\simeq} } & {\Fun_{\BB}(\I{C},\Univ[\BB])}
		\arrow[from=1-2, to=2-2]
		\arrow[from=2-1, to=2-2]
		\arrow[from=1-1, to=2-1]
		\arrow[from=1-1, to=1-2]
	\end{tikzcd}\]
	satisfies the assumptions of Proposition~\ref{prop:ProperIsH-LocalLinear}.
	Thus the claim follows.
\end{proof}

\begin{proof}[Proof of Theorem~\ref{thm:ProperBaseChangeWithCoefficients}:]
	By Remark~\ref{rem:linearisingcommuteswithfunctors}, the same proof as in Lemma~\ref{lem:properImpliesCompact} shows that an $ \EE $-proper morphism is $ \EE $-compact. Hence it remains to prove the converse.
	By Corollary~\ref{cor:PresheafCategoriesCompactLinear}, the same reduction steps as in the proof of Theorem~\ref{thm:CompactProperBaseChange} imply that it suffices to see that for every pullback square of $ \BB $-topoi
	\[\begin{tikzcd}
		{\ZZ'} & {\XX} \\
		{\ZZ} & {\BB}
		\arrow[from=1-2, to=2-2,"p_*"]
		\arrow[from=2-1, to=2-2,"j_*"]
		\arrow[from=1-1, to=2-1]
		\arrow[from=1-1, to=1-2,"j'_*"]
	\end{tikzcd}\]
	in which $j_\ast$ is fully faithful, the square is left adjointable after tensoring with $\EE$.
	By Remark~\ref{rem:TensorIsLexFunctors}, it suffices to see that the square
	\[\begin{tikzcd}
		{\Fun^{\lex}((\EE^{\compact})^{\op},\XX)} & {\Fun^{\lex}((\EE^{\compact})^{\op},\XX)} \\
		{\Fun^{\lex}((\EE^{\compact})^{\op},\BB)} & {\Fun^{\lex}((\EE^{\compact})^{\op},\BB)}
		\arrow["{(j'_*j'^*)_*}", from=1-1, to=1-2]
		\arrow["{(p_*)_*}", from=1-2, to=2-2]
		\arrow["{(p_*)_*}"', from=1-1, to=2-1]
		\arrow["{(j_* j^*)_*}"', from=2-1, to=2-2]
	\end{tikzcd}\]
	commutes.
	We pick a local class $S$ of maps in $\BB$ as in Proposition~\ref{prop:comparisonofcolimits}, so that we obtain equivalences $j'_* j'^* \simeq (-)^{\sh}_{\iota^\prime} $ and $j_\ast j^\ast \simeq (-)^{\sh}_{\iota}$. Now since the inclusions $\Fun^{\lex}((\EE^{\compact})^\op, \XX)\into\Fun((\EE^{\compact})^\op, \XX)$ and $\Fun^{\lex}((\EE^{\compact})^\op, \BB)\into\Fun((\EE^{\compact})^\op, \BB)$ preserve filtered colimits and since colimits in functor $\infty$-categories are computed object-wise, it follows that $(j'_* j'^*)_\ast$ and $(j_* j^*)_\ast$ are given by the $\kappa$-fold iteration of postcomposition with the functors $(-)^{+}_{\iota^\prime}$ and $(-)^{+}_{\iota}$, respectively. Therefore, it suffices to provide an equivalence $(p_\ast(-)^{+}_{\iota^\prime})_* \simeq ((-)^+_{\iota} p_*)_*$.
	
	To obtain such an equivalence, note that Remark~\ref{rem:linearisingcommuteswithfunctors} implies that we may identify the map $\colim_{\Univ_S^\op}\otimes\EE$ with $\colim_{\Univ_S^\op}\colon \iFun(\Univ[S]^\op, \Univ\otimes \EE)\to \Univ\otimes\EE$. Therefore, we deduce that postcomposition with $(-)^{+}_{\iota}$ is equivalently given by the composition
	\begin{equation*}
		\Fun^{\lex}((\EE^{\compact})^\op, \BB)\to \Fun^{\lex}((\EE^{\compact})^\op, \PSh[\BB](\Univ[S]))\simeq \Fun_{\BB}(\Univ[S]^\op, \Univ\otimes \EE )\xrightarrow{\colim} \Fun^{\lex}((\EE^{\compact})^\op, \BB)
	\end{equation*}
	in which the first functor is given by postcomposition with (the global sections of) $\map{\Univ}(\iota(-),-)$.
	Similarly, postcomposition with $(-)^+_{\iota^\prime}$ can be identified with the composition
	\begin{equation*}
		\Fun^{\lex}((\EE^{\compact})^\op, \XX)\to \Fun^{\lex}((\EE^{\compact})^\op, \Fun_{\BB}(\Univ[S]^\op, p_\ast\Univ[\XX]))\simeq \Fun_{\BB}(\Univ[S]^\op, p_\ast\Univ[\XX]\otimes \EE )\xrightarrow{\colim} \Fun^{\lex}((\EE^{\compact})^\op, \XX)
	\end{equation*}
	where the first functor is given by postcomposition with (the global sections of) $\ihom_{p_\ast\Univ[\XX]}(\const_{ p_\ast\Univ[\XX]}\iota(-),-)$ (since this is precisely the map we obtain when composing $\Gamma_{\XX}(\map{\Univ[\XX]}(\iota^\prime(-),-))\colon \XX\to \PSh[\XX](p^\ast\Univ[S])$ with the equivalence $\PSh[\XX](p^\ast\Univ[S])\simeq\Fun_{\BB}(\Univ[S]^\op, p_\ast\Univ[\XX])$).
	Thus, since $\Gamma_{p_* \Univ[\XX]}\otimes \EE$ commutes with $\colim_{\Univ[S]^\op}$, it is enough to provide a commutative diagram
	\begin{equation*}
		\begin{tikzcd}[column sep=11em]
			\XX\arrow[d, "p_\ast"]\arrow[r, "{\ihom_{p_\ast\Univ[\XX]}(\const_{ p_\ast\Univ[\XX]}\iota(-),-)}"] & \Fun_{\BB}(\Univ[S]^\op, p_\ast\Univ[\XX])\arrow[d, "(\Gamma_{p_* \Univ[\XX]})_\ast"]\\
			\BB\arrow[r, "{\map{\Univ[\XX]}(\iota(-),-)}"] & \PSh[\BB](\Univ[S]),
		\end{tikzcd}
	\end{equation*}
	which is evident from \cite[Remark~3.2.10.13]{PresTop}.
\end{proof}

\begin{example}
	\label{ex:EtaleCohomology}
	For a scheme $ X $ let us denote by $ X_{\et}^{\hyp}$ the $ \infty $-topos of \'etale hypersheaves of spaces on $ X $.
	If $ f \colon X \to S $ is a proper morphism of schemes, then the geometric morphism $ f_* \colon X_{\et}^{\hyp} \to S_{\et}^{\hyp} $ is $ \mathbf{D}(R) $-proper for any torsion ring $ R $.
	In fact, since $ X_{\et}^{\hyp}$ has enough points by \cite[Theorem A.4.0.5]{lurie2018}, the family of all points $ \bar{s}_* \colon \mathcal{S} \to  X_{\et}^{\hyp} $ yields a family of jointly conservative functors $ \bar{s}^* \otimes \mathbf{D}(R) $.
	Furthermore, proper base change for unbounded derived categories of \'etale sheaves (see \cite[Theorem 1.2.1]{cisinski2016}) implies that the squares
	\[\begin{tikzcd}
		{X_{\bar{s},\et}^{\hyp}} & {X_{\et}^{\hyp}} \\
		{\mathcal{S}} & {S_{\et}^{\hyp}}
		\arrow[from=2-1, to=2-2]
		\arrow[from=1-2, to=2-2]
		\arrow[from=1-1, to=2-1]
		\arrow[from=1-1, to=1-2]
	\end{tikzcd}\]
	are left adjointable after applying $ - \otimes \mathbf{D}(R) $.
	Finally, \cite[Corollary 1.1.15]{cisinski2016} implies that $ X_{\bar{s},\et}^{\hyp} $ is $ \mathbf{D}(R) $-compact so that we may apply Proposition~\ref{prop:ProperIsH-LocalLinear} and Theorem~\ref{thm:ProperBaseChangeWithCoefficients} to conclude that $ f_* $ is $ \mathbf{D}(R) $-proper.
\end{example}

\begin{definition}
	We call a geometric morphism $f_* \colon \YY \to \XX$ $n$-proper if it is $\mathcal{S}_{\leq n}$-proper, where $\SS_{\leq n}$ denotes the $\infty$-category of $n$-truncated spaces.
	We call $f_*$ almost proper if it is $n$-proper for all $n$.
\end{definition}

\begin{example}
	Recall that by \cite[Example 4.8.1.22]{Lurie2017} one may identify $ \XX \otimes \mathcal{S}_{\leq n} \simeq \XX_{\leq n}$.
	Thus it follows from \cite[Proposition A.2.3.1]{lurie2018} and Theorem~\ref{thm:ProperBaseChangeWithCoefficients} that for an $n$-coherent $\infty$-topos $\XX$ the geometric morphism $ \Gamma_* \colon \XX \to \mathcal{S}$ is $n$-proper.
	In particular it is almost proper if $\XX$ is coherent.
	However it is not proper in general (see Remark \ref{rem:Spec(R)Counterexample}).
\end{example}

\begin{example}
	A geometric morphism $ f_* \colon \XX \to \BB $ is $ \operatorname{Set} $-proper if and only if the underlying morphism of $ 1 $-topoi is tidy in the sense of \cite[\S3]{Moerdijk2000}.
\end{example}

\bibliographystyle{halpha}
\bibliography{references.bib}

\end{document}